\newcommand\AGL{\mathrm{AGL}}\newcommand\Aut{\mathrm{Aut}}\newcommand\Alt{\mathrm{Alt}}
\newcommand\FF{\mathbb{F}}\newcommand\Fix{\mathrm{Fix}}\newcommand\fpr{\mathrm{fpr}}
\newcommand\GL{\mathrm{GL}}
\newcommand\PSL{\mathrm{PSL}}
\newcommand\Sym{\mathrm{Sym}}
\newcommand\Z{\mathbb{Z}}
\newtheorem{theorem}{Theorem}[section]
\newtheorem{corollary}[theorem]{Corollary}
\newtheorem{lemma}[theorem]{Lemma}
\newtheorem{proposition}[theorem]{Proposition}
\theoremstyle{definition}
\newtheorem{definition}[theorem]{Definition}
\newtheorem{conjecture}[theorem]{Conjecture}
\newtheorem*{remark}{Remark}
\renewcommand{\email}[2][]{%
  \ifx\emails\@empty\relax\else{\g@addto@macro\emails{,\space}}\fi%
  \@ifnotempty{#1}{\g@addto@macro\emails{\textrm{(#1)}\space}}%
  \g@addto@macro\emails{#2}%
}
\title{Determining the vertex stabilizers of $4$-valent half-arc-transitive graphs}%
\author{Binzhou Xia$^\dagger$}
\author{Zhishuo Zhang$^{\dagger,*}$}
\author{Sanming Zhou$^\dagger$}
\address{$^*$Corresponding author}
\address{$^\dagger$School of Mathematics and Statistics\\The University of Melbourne\\Parkville, VIC 3010\\Australia}
\email{binzhoux@unimelb.edu.au (Binzhou Xia), zhishuoz@student.unimelb.edu.au (Zhishuo Zhang), sanming@unimelb.edu.au (Sanming Zhou)}
\begin{document}

\begin{abstract}
We say that a group is a $4$-HAT-stabilizer if it is the vertex stabilizer of some connected $4$-valent half-arc-transitive graph. In 2001, Maru\v{s}i\v{c} and Nedela proved that every $4$-HAT-stabilizer must be a concentric group. However, over the past two decades, only a very small proportion of concentric groups have been shown to be $4$-HAT-stabilizers. This paper develops a theory that provides a general framework for determining whether a concentric group is a $4$-HAT-stabilizer. With this approach, we significantly extend the known list of $4$-HAT-stabilizers. As a corollary, we confirm that $\mathcal{H}_7\times C_2^{m-7}$ are $4$-HAT-stabilizers for $m\geq 7$, achieving the goal of a conjecture posed by Spiga and Xia.

\textit{Keywords:} tetravalent, 4-valent, half-arc-transitive, vertex stabilizer, concentric group

\textit{MSC2000:} 05C25, 20B25, 20B35
\end{abstract}

\maketitle

\section{Introduction}\label{sec:introduction}

Throughout this paper, graphs are assumed to be finite, simple and undirected. We denote by $V(\Gamma)$, $E(\Gamma)$ or $A(\Gamma)$ the set of all vertices, edges, or arcs of a graph $\Gamma$, where \emph{arcs} are ordered pairs of adjacent vertices. We use $\Aut(\Gamma)$ to denote the full automorphism group of $\Gamma$. Let $G$ be a subgroup of $\mathrm{Aut}(\Gamma)$. We say that $G$ (or the graph $\Gamma$) is \emph{vertex-transitive}, \emph{edge-transitive} or \emph{arc-transitive} if 
$G$ (or $\Aut(\Gamma)$) acts transitively on $V(\Gamma)$, $E(\Gamma)$ or $A(\Gamma)$, respectively. We say that $G$ (or the graph $\Gamma$) is \emph{half-arc-transitive} if $G$ (or $\Aut(\Gamma)$) is vertex-transitive and  edge-transitive but not arc-transitive. For preliminaries on permutation group theory, particularly the types of primitive groups included in the O'Nan Scott Theorem, the reader is directed to~\cite{P1990}.

\subsection{Background}
In 1966, Tutte proved in his book \cite{T1966} that half-arc-transitive graphs must have even valency and noted that the existence of such graphs was, at the time, unknown. Clearly, there is no half-arc-transitive graph of valency $2$, so the smallest valency of half-arc-transitive groups is $4$. In 1970, Bouwer \cite{B1970} constructed a $2k$-valent half-arc-transitive graph for each $k\geq 2$, which completely answered Tutte's question. In 1998, Maru\v{s}i\v{c} stated in his survey paper \cite{M1998} that research on half-arc-transitive graphs gained new momentum in the 1990s, with ``thrilling" progress made in the study of $4$-valent half-arc-transitive graphs. As he anticipated, this field has been very active over the past two decades, especially on half-arc-transitive graphs of valency $4$.

In the study of vertex-transitive graphs, the vertex stabilizers are of great interest and crucial importance. In 2001, Maru\v{s}i\v{c} and Nedela \cite{MN2001} constrained the structure of vertex stabilizers of connected $4$-valent half-arc-transitive graphs to concentric groups, defined as follows.
\begin{definition}\cite[page~28]{MN2001}\label{def:concentric}
      A group $H=\langle a_1,\ldots,a_m\rangle$ is said to be \emph{concentric} if $|\langle a_i,\ldots,a_j\rangle|=2^{j-i+1}$ for all $i<j$ and there exists a group isomorphism $\varphi: \langle a_1,\ldots,a_{m-1}\rangle\rightarrow \langle a_2,\ldots,a_{m}\rangle$ such that $a_i^{\varphi}=a_{i+1}$ for each $i\in\{1,\ldots,m-1\}$.
\end{definition}
They showed that a group $H$ is a concentric group if and only if there exists a connected $4$-valent graph $\Gamma$ and a group $G\leq \Aut(\Gamma)$ such that $G$ is  half-arc-transitive and the vertex stabilizer of $G$ is isomorphic to $H$. In particular, the vertex stabilizers of $4$-valent half-arc-transitive graphs must be concentric groups. 
However, given that the half-arc-transitive action of $G$ does not imply $\Aut(\Gamma)$ is also half-arc-transitive, it is a challenging and long-standing problem to determine which concentric groups are indeed the vertex stabilizers of some connected $4$-valent half-arc-transitive graphs. 

For convenience, we call a group a \emph{$4$-HAT-stabilizer} if it is the vertex stabilizer of some connected $4$-valent half-arc-transitive graph. Despite considerable effort, only a small proportion of concentric groups have been identified as $4$-HAT-stabilizers. In fact, all the $4$-HAT-stabilizers known to date \cite{CM2003,CPS2015,M1998,M2005,S2016,SX2021,X2021} can be summarized as follows:
      \begin{equation}\label{eq:list}
            \begin{split}
            C^m_2 ~\text{for~all}~m\geqslant 1, &\quad D_8\times C_2^{m-3}~\text{for~all}~ m\geqslant 3, \\
            D^2_8\times C_2^{m-6} ~\text{for~all}~ m\geqslant 6, &\quad \mathcal{H}_7\times C_2^{m-7} ~\text{for}~ 7\leqslant m\leqslant 8,
            \end{split}
      \end{equation}
where $C_2^k$ is the direct product of $k$ copies of cyclic groups of order $2$, $D_8$ is the dihedral group of order $8$, and $\mathcal{H}_7$ is a group of order $2^7$ defined by
\begin{align}
    \mathcal{H}_7=\langle a_1,\dots,a_7 \mid ~&a_i^2=1 ~\text{for}~i\leqslant 7, (a_ia_j)^2=1~\text{for} ~|i-j|\leqslant 4,\notag\\
    &(a_1a_6)^2=a_3,~ (a_2a_7)^2=a_4, ~(a_1a_7)^2=a_5\rangle.\label{eq:H}
\end{align}
This list also implies the following theorem:
\begin{theorem}\cite[Theorem~1.4]{SX2021}\label{thm:2^8}
      Every concentric group of order at most $2^8$ is a $4$-HAT-stabilizer.
\end{theorem}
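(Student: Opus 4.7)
The plan is to reduce Theorem~\ref{thm:2^8} to a finite classification problem. Since every group listed in~\eqref{eq:list} is already known to be a $4$-HAT-stabilizer by the cited works, it suffices to show that every concentric group $H$ with $|H|\leqslant 2^8$ is isomorphic to one of the groups in~\eqref{eq:list}. Restricted to $m\leqslant 8$, the relevant entries are $C_2^m$, $D_8\times C_2^{m-3}$, $D_8^2\times C_2^{m-6}$, and $\mathcal{H}_7\times C_2^{m-7}$, so the whole theorem hinges on proving that this short list already exhausts concentric groups of order at most $2^8$.

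To carry out the classification, I would proceed by induction on $m$. The small cases are direct: $m=1$ forces $H=C_2$; $m=2$ forces $H=C_2^2$; for $m=3$, since $\langle a_1,a_2\rangle=\langle a_2,a_3\rangle\cong C_2^2$, the element $a_2$ commutes with both $a_1$ and $a_3$, so $H\cong C_2^3$ or $D_8$ according to whether $(a_1a_3)^2$ is trivial. For the inductive step the key is to exploit the twin concentric subgroups $H_-:=\langle a_1,\ldots,a_{m-1}\rangle$ and $H_+:=\langle a_2,\ldots,a_m\rangle$ of order $2^{m-1}$ linked by the shift isomorphism $\varphi$ from Definition~\ref{def:concentric}. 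These satisfy $H_-\cap H_+=\langle a_2,\ldots,a_{m-1}\rangle$, of order $2^{m-2}$, and every relation involving $a_m$ in $H_+$ pulls back via $\varphi^{-1}$ to a relation involving $a_1$ in $H_-$. Hence a concentric group of order $2^m$ is obtained from one of order $2^{m-1}$ by adjoining a new involution $a_m$ whose interaction with $a_1,\ldots,a_{m-1}$ is constrained by $\varphi$, and iterating from $m=3$ to $m=8$ yields a finite list of candidates.

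For each candidate, I would identify its isomorphism type by computing standard invariants such as the centre, commutator subgroup, nilpotency class, Frattini quotient, and exponent. These invariants separate the four families in~\eqref{eq:list} cleanly for $m\leqslant 8$: for example, $C_2^m$ is elementary abelian, $D_8\times C_2^{m-3}$ has nilpotency class $2$ with commutator subgroup $C_2$, $D_8^2\times C_2^{m-6}$ has commutator subgroup $C_2^2$, and $\mathcal{H}_7\times C_2^{m-7}$ is flagged by the nontrivial squares of commutators $(a_1a_6)^2=a_3$, $(a_2a_7)^2=a_4$, $(a_1a_7)^2=a_5$ in~\eqref{eq:H}. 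With these signatures the matching step should be routine.

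The main obstacle is the enumeration at $m=6,7,8$: the compatibility conditions imposed by $\varphi$ on the new generator $a_m$ become intricate, and exhaustivity must be established carefully. In particular, one has to argue that no genuinely new concentric group first appears at order $2^7$ beyond $\mathcal{H}_7$, and none at order $2^8$ beyond $\mathcal{H}_7\times C_2$; the delicate part is showing that every way of extending $D_8^2\times C_2^{m-6}$ or $\mathcal{H}_7$ by an involution compatible with $\varphi$ either falls into one of the four families or violates the order constraints $|\langle a_i,\ldots,a_j\rangle|=2^{j-i+1}$. I anticipate this can be completed by disciplined hand analysis based on how $a_m$ commutes with, or produces new commutators among, $a_1,\ldots,a_{m-1}$, and corroborated by a finite computer-algebra search over $2$-groups of order at most $2^8$, which is presumably the route taken in~\cite{SX2021}.
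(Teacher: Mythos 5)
Your reduction is the same one the paper relies on: Theorem~\ref{thm:2^8} is quoted verbatim from \cite{SX2021}, and the paper's only ``proof'' is the remark that the list \eqref{eq:list} implies it, i.e.\ that (i) every group in \eqref{eq:list} is already known to be a $4$-HAT-stabilizer by the cited constructions and (ii) every concentric group of order at most $2^8$ is isomorphic to one of them. You identify exactly this decomposition, and your treatment of (i) and of the small cases $m\leq 3$ is fine.

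The gap is in (ii). The exhaustiveness of the list for $m\in\{6,7,8\}$ is the entire mathematical content of the theorem beyond the citations, and your proposal does not establish it: the enumeration is described as something you ``anticipate can be completed by disciplined hand analysis'' and ``corroborated by a finite computer-algebra search'', which is a plan rather than a proof. Moreover, the induction you set up (adjoining an involution $a_m$ to a concentric group of order $2^{m-1}$ subject to compatibility with $\varphi$) is the hard way to organize the case analysis; the presentation in Definition~\ref{def:rel} already does this bookkeeping. Since $d\geq 2m/3$, every concentric group with $m\leq 8$ has $d'=m-d\leq 2$, so the only data to enumerate are the parameters $\varepsilon_{i,j}$ in a triangular array with at most two rows, subject to the existence of the shift isomorphism and the order conditions $|\langle a_i,\ldots,a_j\rangle|=2^{j-i+1}$; one then checks which of the finitely many admissible arrays yield which isomorphism types (Table~\ref{tab:1} records the answer). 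Until that finite check is actually carried out --- by hand or by the computer search you allude to --- the argument is incomplete; in the present paper the check is outsourced entirely to \cite{SX2021}.
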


\subsection{The main result}
In this paper, we develop a general theory to determine which concentric groups are $4$-HAT-stabilizers (see Subsection~\ref{subsec:1.3}). By applying this theory, we significantly extend the above known list of $4$-HAT-stabilizers among concentric groups. To state the main result, we introduce an equivalent definition of concentric groups, with the equivalence established in~\cite[Theorem~5.5]{MN2001}\footnote{In \rm(R3) of ~\cite[Theorem~5.5]{MN2001}, the subscript $2d-2m+j-1$ should be $2d-2m+j-i$. This typo was first spotted and corrected by Poto\v{c}nik and Verret in~\cite[Page~499]{PV2010}.}.

\begin{definition}\label{def:rel}
    We call a group $H=\langle a_1,\ldots,a_m\rangle$  \emph{concentric} if there exist an integer $d$ with $2m/3\leq d\leq m$ and parameters 
    \begin{center}
          \begin{tabular}{llllll}
                $\varepsilon_{d,0}$& $\varepsilon_{d,1}$&$\cdots$& $\varepsilon_{d,3d-2m}$&&\\
                $\varepsilon_{d+1,0}$&$\varepsilon_{d+1,1}$&$\cdots$ &\quad $\cdots$& $\varepsilon_{d+1,3d-2m+1}$&\\
                \ \ $\vdots$&\ \ $\vdots$ & & &&$\ddots$\\
                $\varepsilon_{m-1,0}$&$\varepsilon_{m-1,1}$&$\cdots$& $\quad\cdots$& \ \ $\cdots$& $\varepsilon_{m-1,2d-m-1}$
          \end{tabular}
      \end{center}
    in $\FF_2$ such that the following conditions hold:
    \begin{itemize}
          \item $a_1^2=\dots=a_m^2=1$;
          \item $d$ is the largest integer such that $[a_i,a_j]=1$ whenever $|i-j|<d$;
          \item $[a_i,a_j]=a_{m-d+i}^{\varepsilon_{j-i,0}}\dots a_{d-m+j}^{\varepsilon_{j-i,2d-2m+j-i}}$ whenever $j-i\geq d$.
    \end{itemize}
\end{definition}
This definition clearly illustrates the structure of concentric groups through generators and relations. The parameter $d$ in the definition is referred to as the \emph{diameter} of $H$. It is evident that $H\cong C_2^m$ if $d=m$. Moreover, it is not hard to prove (see~\cite[Lemma~6.1]{MN2001}) that  $H\cong D_8\times C_2^{m-3}$ if $d=m-1$. Next we define tightly concentric groups. 

\begin{definition}\label{def:regular}
      Let $H=\langle a_1,\ldots,a_m\rangle$ be a concentric group with diameter $d$ and parameters $\varepsilon_{i,j}$ defined in Definition~\ref{def:rel}. We say that $H$ is a \emph{tightly concentric group} if $3d>2m$ and the following two conditions hold:
      \begin{align}
            &\varepsilon_{d,0}=1\ \text{ and }\ \varepsilon_{d,i}=0\ \text{ for each }\ i\in \{1,\ldots,3d-2m\};\label{enu:C1}\tag{C1}\\
            &\varepsilon_{m-i,2d-m-i}=0\ \text{ for each }\ i\in\{1,\ldots,m-d\}.\label{enu:C2}\tag{C2}
      \end{align}
\end{definition}

Arrange $\varepsilon_{i,j}$ as shown in Definition~\ref{def:rel}. Conditions~\eqref{enu:C1} and~\eqref{enu:C2} specify that the `top and right boundaries' consist entirely of zeros, except for $\varepsilon_{d,0}=1$. This indicates that the restriction for concentric groups to be tightly concentric is mild. The case $3d=2m$ must be excluded in Definition~\ref{def:regular} because condition~\eqref{enu:C1} would contradict condition~\eqref{enu:C2} in this situation.
With the tightly concentric group defined, we can now state our main theorem neatly.

\begin{theorem}\label{thm:regular}
      Every tightly concentric group is a $4$-HAT-stabilizer.
\end{theorem}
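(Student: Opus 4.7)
The plan is to realize each tightly concentric group $H$ as the vertex stabilizer of the full automorphism group of an explicit connected $4$-valent graph $\Gamma$, and to verify that $\Aut(\Gamma)$ acts half-arc-transitively.

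First, for $H=\langle a_1,\ldots,a_m\rangle$ with diameter $d$, I would apply the `if' direction of \cite[Theorem~5.5]{MN2001}: construct an overgroup $G=\langle H,t\rangle$ in which the shift isomorphism $\varphi:a_i\mapsto a_{i+1}$ of Definition~\ref{def:concentric} is realised by conjugation with a suitable element $t$, and take $\Gamma$ to be the coset graph $\Cos(G,H,HtH)$. This yields a connected $4$-valent graph on which $G$ is half-arc-transitive with $G_v\cong H$; the tightness hypothesis plays no role at this stage.

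The crux is to show that $\Aut(\Gamma)$ itself is half-arc-transitive. Suppose for contradiction that some $x\in\Aut(\Gamma)$ reverses an edge $\{v,w\}$. Then $x$ swaps the vertex stabilizers $H_v$ and $H_w$, normalises the edge stabilizer $H_v\cap H_w$, and induces an involutive map on the two-element quotient $H_v/(H_v\cap H_w)$. I would track the effect of $x$ on the generators $a_1,\ldots,a_m$ and match it against the commutator relations listed in Definition~\ref{def:rel}. Condition~\eqref{enu:C1} pins the `top' boundary of the commutator table to $[a_i,a_{i+d}]=a_{m-d+i}$ with no trailing terms, while~\eqref{enu:C2} zeroes the `right' boundary. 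Together these rigidify $H$ enough that the existence of $x$ would force some boundary parameter $\varepsilon_{i,j}$ to take a value violating~\eqref{enu:C1} or~\eqref{enu:C2}, yielding the desired contradiction.

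The main obstacle is precisely this arc-reversal step, which in previous work has required family-by-family analysis (for instance for $D_8^2\times C_2^{m-6}$ in \cite{SX2021}); the novelty here is to dispatch it uniformly. I expect the general framework developed in Subsection~\ref{subsec:1.3} to reduce the question to a finite combinatorial verification on the parameters $\varepsilon_{i,j}$, which the boundary conditions~\eqref{enu:C1} and~\eqref{enu:C2} then handle in a single stroke. Once Theorem~\ref{thm:regular} is in hand, the conjecture of Spiga and Xia follows by checking directly from the presentation~\eqref{eq:H} that $\mathcal{H}_7\times C_2^{m-7}$ is tightly concentric for every $m\geq 7$.
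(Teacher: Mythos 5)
Your proposal founders on the arc-reversal step, and the gap is not one of detail but of substance. Showing that $\Aut(\Gamma)$ itself is half-arc-transitive is exactly Step~\ref{enu:stepIII} of the strategy recalled in Subsection~\ref{subsec:1.3}, which for the known families required case-by-case arguments invoking the O'Nan--Scott Theorem and the Classification of Finite Simple Groups; the paper deliberately avoids it. Your sketch assumes that an edge-reversing $x\in\Aut(\Gamma)$ ``swaps the vertex stabilizers $H_v$ and $H_w$'' and can be tracked on the generators $a_1,\ldots,a_m$, but this presupposes that $\Aut(\Gamma)_v$ is still isomorphic to $H$ and that $x$ interacts controllably with the presentation of $H$ --- neither of which is known. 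A priori $\Aut(\Gamma)$ may be far larger than $G=\langle H,t\rangle$, and there is no mechanism by which an automorphism of the abstract graph would force a boundary parameter $\varepsilon_{i,j}$ to violate~\eqref{enu:C1} or~\eqref{enu:C2}. Indeed, conditions~\eqref{enu:C1} and~\eqref{enu:C2} play no such rigidifying role anywhere in the paper; they are used only to make the coordinate computations of $y_\tau$ and its conjugates tractable (Lemmas~\ref{lm:C2}--\ref{lm:regular_y^{x^t}} and the technical lemmas of Section~\ref{sec:calculation}).

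The paper's actual route sidesteps $\Aut(\Gamma)$ entirely. It chooses the permutation $x_\tau$ of~\eqref{eq:phi} with $\tau$ as a free parameter, and proves that for a suitable $\tau$ the group $\langle R(H),x_\tau\rangle$ equals $\Alt(H)$: Lemmas~\ref{lm:SubAlt} and~\ref{lm:AS and Dia} place it inside $\Alt(H)$ and exclude the almost simple and diagonal types, while Propositions~\ref{prop:primitive2},~\ref{prop:affine} and~\ref{prop:exwr} (each fixing a different block of coordinates of $\bm{\tau}$) establish primitivity and exclude the affine and product-action types, so O'Nan--Scott forces equality with $\Alt(H)$. Since $\Alt(H)$ is nonabelian simple, Theorem~\ref{thm:2.2} of Spiga and Xia then yields that $H$ is a $4$-HAT-stabilizer on \emph{some} connected $4$-valent graph, with no need to determine the full automorphism group of the graph constructed in Step~\ref{enu:stepI}. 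If you want to salvage your approach you would have to supply the entire Step~\ref{enu:stepIII} analysis from scratch, which is precisely the obstruction that motivated the alternative method.
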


To illustrate the significance of this result in extending the list of known $4$-HAT-stabilizers among concentric groups, we present in the following Table~\ref{tab:1} the diameters $d$ and parameters $\varepsilon_{i,j}$ of groups in~\eqref{eq:list}.

\begin{table}[h!]
      \centering
      \setlength{\arrayrulewidth}{1pt}
      \renewcommand{\arraystretch}{1.1}
      \begin{tabular}{c|c|c} 
            \hline
            Group & $d$ & $\{(i,j)\mid \varepsilon_{i,j}=1\}$\\ [0.5ex] 
            \hline
            $C_2^m$ $(m\geq 1)$ &  $m$ & $\emptyset$\\ 
            $D_8\times C_2^{m-3}$ $(m\geq 3)$ & $m-1$ &$\{(d,0)\}$\\
            $D_8^2\times C_2^{m-6}$ $(m\geq 6)$ & $m-2$ & $\{(d,0)\}$\\
            $\mathcal{H}_7\times C_2^{m-7}$ $(m\in\{7,8\})$ &$m-2$ & $\{(d,0)$, $(d+1,2)\}$\\
            \hline
      \end{tabular}
      \caption{Diameter $d$ and parameters $\varepsilon_{i,j}$ of groups in~\eqref{eq:list}.}
      \label{tab:1}
\end{table}

Clearly, by applying Theorem~\ref{thm:regular}, we can extend the two examples $\mathcal{H}_7$ and $\mathcal{H}_7\times C_2$ in the last row of Table~\ref{tab:1} to the infinite family as follows, which accomplishes the goal of \cite[Conjecture~5.3]{SX2021}.
\begin{corollary}
      The group $\mathcal{H}_7\times C_2^{m-7}$ is a $4$-HAT-stabilizer for each $m\geq 7$.
\end{corollary}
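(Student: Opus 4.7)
The plan is to derive the corollary from Theorem~\ref{thm:regular} by exhibiting a concentric presentation of $\HH_7\times C_2^{m-7}$ that is tightly concentric for every $m\geq 8$, while the case $m=7$ is already covered by Theorem~\ref{thm:2^8} since $|\HH_7|=2^7\leq 2^8$.

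For $m\geq 8$, first I would construct the right generating sequence. Let $a_1,\dots,a_7$ be the generators of $\HH_7$ in~\eqref{eq:H}, and let $c_1,\dots,c_{m-7}$ be a basis of the direct factor $C_2^{m-7}$. Define $b_1,\dots,b_m$ in $H:=\HH_7\times C_2^{m-7}$ by
\[ b_i=a_i\ \text{for}\ 1\leq i\leq 5,\quad b_{5+j}=c_j\ \text{for}\ 1\leq j\leq m-7,\quad b_{m-1}=a_6,\quad b_m=a_7, \]
that is, insert the central generators into the middle of the $\HH_7$-sequence. This positioning is chosen so that the three non-commuting pairs $(a_1,a_6),(a_2,a_7),(a_1,a_7)$ of $\HH_7$ land at positions $(1,m-1),(2,m),(1,m)$ respectively, i.e., at the maximal distances $m-2,\,m-2,\,m-1$ in the new sequence.

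Next I would verify that this equips $H$ with the concentric structure of Definition~\ref{def:rel} of diameter $d=m-2$ and parameters $\varepsilon_{m-2,0}=\varepsilon_{m-1,2}=1$, with all other $\varepsilon_{i,j}=0$. All $b_i$ are involutions, and a routine case check gives $[b_i,b_j]=1$ whenever $|i-j|<m-2$. The three remaining commutators are
\[ [b_1,b_{m-1}]=(a_1a_6)^2=a_3=b_3,\ \ [b_2,b_m]=(a_2a_7)^2=a_4=b_4,\ \ [b_1,b_m]=(a_1a_7)^2=a_5=b_5, \]
and one checks these match the commutator formula of Definition~\ref{def:rel} for the chosen parameters. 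This extends the entries for $\HH_7$ and $\HH_7\times C_2$ in Table~\ref{tab:1} to every $m\geq 7$.

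Finally I would check conditions~\eqref{enu:C1} and~\eqref{enu:C2} for $m\geq 8$. The inequality $3d>2m$ becomes $m>6$; condition~\eqref{enu:C1} asks $\varepsilon_{d,0}=1$ and $\varepsilon_{d,i}=0$ for $1\leq i\leq 3d-2m=m-6$, which is immediate since row $d=m-2$ has only the entry at position $0$ non-zero; and condition~\eqref{enu:C2} reduces to $\varepsilon_{m-1,m-5}=\varepsilon_{m-2,m-6}=0$, which holds because $m-5\neq 2$ and $m-6\neq 0$ once $m\geq 8$. Hence $H$ is tightly concentric, and Theorem~\ref{thm:regular} completes the proof. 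The only substantive step is the ``middle insertion'' ordering in the second paragraph, which is forced by the need to align the three exceptional commutators of $\HH_7$ with the subscript pattern of Definition~\ref{def:rel}; after that, the verification is index arithmetic, and the only exceptional value of $m$ in the target range (namely $m=7$, where~\eqref{enu:C2} fails as $m-5=2$) falls within the reach of Theorem~\ref{thm:2^8}.
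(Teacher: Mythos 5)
Your proposal is correct and follows essentially the same route as the paper, which obtains the corollary by invoking Theorem~\ref{thm:regular} together with the concentric parameters $d=m-2$, $\varepsilon_{d,0}=\varepsilon_{d+1,2}=1$ recorded in Table~\ref{tab:1}; your explicit ``middle insertion'' ordering of the generators and the verification of \eqref{enu:C1}--\eqref{enu:C2} merely supply details the paper leaves implicit. Your observation that the case $m=7$ must be rerouted through Theorem~\ref{thm:2^8}, because \eqref{enu:C2} fails there ($\varepsilon_{m-1,m-5}=\varepsilon_{6,2}=1$), is a correct refinement of a point the paper glosses over.
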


As can be seen from Definition~\ref{def:rel}, the structure of the concentric group becomes more complex as $m-d$ increases. This may explain why all the known $4$-HAT-stabilizers in the literature have $m-d\leq 2$. Our main theorem is capable of overcoming the challenges associated with larger values of $m-d$, providing a powerful tool to identify complex $4$-HAT-stabilizers among concentric groups.

\subsection{The approach}\label{subsec:1.3}
Extending some earlier work \cite{CM2003,M2005,MN2001}, Spiga~\cite{S2016} formally introduced a strategy in 2016 for constructing connected $4$-valent half-arc-transitive graphs with a prescribed vertex stabilizer. This strategy successfully led to the identification of the $4$-HAT-stabilizers $C_2^m$~\cite[Theorem~1.1]{M2005}, $D_8^2$ and $\mathcal{H}_7$ \cite[Theorem~1.1]{S2016}, as well as $D_8^2\times C_2^{m-6}$ \cite[Theorem~1.2]{X2021}. Below, we outline the most effective portion of the strategy, starting from a concentric group $H$:
\begin{enumerate}[\textsf{Step} I.]
      \item \label{enu:stepI} Choose a permutation $x$ on $H$ and construct a $4$-valent graph $\Gamma$ such that $\langle R(H),x\rangle$ is half-arc-transitive with vertex stabilizer isomorphic to $H$, where $R(H)$ is the right regular permutation representation of $H$.
      \item \label{enu:stepII} Show that $\langle R(H),x\rangle=\Alt(H)$.
      \item \label{enu:stepIII} Show that $\langle R(H),x\rangle=\Aut(\Gamma)$. 
\end{enumerate}

Step~\ref{enu:stepI} can be accomplished either through coset graph construction (see, for example, \cite[Construction~3.3]{X2021}) or by taking the orbital graph of a transitive permutation group with a non-self-paired suborbit of length $2$ (see, for example, \cite[Theorem~1.1 and Section~2]{MN2001}). 

In~\cite{S2016}, Spiga completed Step~\ref{enu:stepII} for $D_8^2$ and $\mathcal{H}_7$ via direct verification using Magma~\cite{Magma} and suggested that it might also be proved using O'Nan-Scott Theorem. This was later confirmed by Xia~\cite{X2021} for the infinite family $D_8^2\times C_2^{m-6}$, but is still challenging for general concentric groups.

For Step~\ref{enu:stepIII}, the corresponding proofs for the groups considered in~\cite{S2016} and~\cite{X2021} involve very sophisticated analysis based on the O'Nan-Scott Theorem and the Classification of Finite Simple Groups, which are hard to generalize. In 2021, Spiga and Xia~\cite{SX2021} developed an alternative method to determine whether $H$ is a $4$-HAT-stabilizer, which eludes Step~\ref{enu:stepIII}. Precisely, they showed the following:
\begin{theorem}\cite[Theorem~1.1]{SX2021}\label{thm:2.2}
      Let $\Gamma$ be a connected $4$-valent graph, and let $T$ be a nonabelian simple half-arc-transitive subgroup of $\Aut(\Gamma)$. Then the vertex stabilizer of $T$ is a $4$-HAT-stabilizer. 
\end{theorem}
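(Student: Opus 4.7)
The plan is to construct, starting from $(T,\Gamma,v)$, a connected $4$-valent graph $\Gamma'$ whose \emph{full} automorphism group is half-arc-transitive and has vertex stabilizer isomorphic to $T_v$. Note that $\Gamma$ itself need not be such a witness: the overgroup $\Aut(\Gamma)$ may fuse the two $T_v$-orbits on the neighborhood of $v$, making $\Gamma$ arc-transitive under $\Aut(\Gamma)$ or enlarging its vertex stabilizer. So I must be prepared to replace $\Gamma$ by a companion graph on which the half-arc-transitive action of $T$, and in particular the isomorphism type of the point stabilizer, is preserved.

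First I would reformulate the problem group-theoretically. Setting $H=T_v$, the half-arc-transitive $T$-action on $\Gamma$ is captured by a double coset $HgH$ with $|H:H\cap H^g|=2$ and $HgH\neq Hg^{-1}H$, so that $\Gamma\cong \Cos(T,H,HgH)$. The task then becomes: find some $g'\in T$ (or, if necessary, an ambient group $T'\geq T$ with a subgroup $H'\cong H$) such that the resulting coset graph is half-arc-transitive under its entire automorphism group with point stabilizer isomorphic to $H$.

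Next I would exploit the simplicity of $T$ to restrict edge-transitive overgroups of $T$ in $\Sym(T/H)$. Because $T$ is nonabelian simple and acts faithfully and transitively, the action is quasi-primitive of almost simple type. Standard O'Nan--Scott analysis together with the Classification of Finite Simple Groups shows that any edge-transitive overgroup either normalizes $T$ (so sits inside $T.\Out(T)$) or is of product/diagonal type; the restrictive local structure, namely two paired $H$-orbits of length $2$ on the neighborhood of $v$, should rule out the latter. Consequently, any arc-reversing automorphism of a candidate coset graph must be induced by an outer automorphism of $T$ swapping its two arc-orbits.

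The main obstacle, and the heart of the argument, is to eliminate these residual arc-reversing outer automorphisms. The approach is to construct a regular cover $\widetilde\Gamma\to\Gamma$ whose voltage data is $T$-invariant but cannot be preserved by any arc-reversing outer automorphism of $T$; a $C_p$-cover whose voltage assignment is supported on just one of the two $T$-arc-orbits is a natural candidate. The lift of $T$ then acts half-arc-transitively on $\widetilde\Gamma$ with stabilizer still isomorphic to $H$, while the asymmetry of the voltages obstructs the lift of any outer automorphism that interchanges arc-orbits. Ensuring that such a cover can be chosen simultaneously connected, $4$-valent, and free of accidental symmetries — especially when $\Out(T)$ is large or when $T$ admits exceptional overgroups — is the delicate technical step, likely requiring a CFSG-driven case analysis, that would complete the proof and certify $T_v$ as a $4$-HAT-stabilizer.
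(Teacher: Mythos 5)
This statement is Theorem~1.1 of \cite{SX2021}; the present paper quotes it as an external input and does not reprove it, so there is no internal proof to compare against line by line. Judged on its own terms, your proposal correctly identifies the broad strategy of the cited proof (replace $\Gamma$ by a regular cover on which the lift of $T$ is still half-arc-transitive with the same vertex stabilizer, then control the full automorphism group of the cover), but two of your key steps do not work as described. First, the proposed obstruction via ``a $C_p$-cover whose voltage assignment is supported on just one of the two $T$-arc-orbits'' is vacuous: a voltage assignment must satisfy $\zeta(v,u)=\zeta(u,v)^{-1}$, and since $T$ is edge- but not arc-transitive, arc reversal interchanges the two $T$-orbits on arcs; hence the support of any voltage assignment is automatically closed under reversal and cannot be concentrated on one orbit. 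Whatever asymmetry is supposed to prevent an arc-reversing automorphism from lifting must come from a finer invariant than the support, and your sketch does not supply one.

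Second, the claim that ``any edge-transitive overgroup either normalizes $T$ or is of product/diagonal type'' by O'Nan--Scott is unjustified: the action of $T$ on $T/H$ with $H$ a $2$-group is quasiprimitive of almost simple type, but overgroups of $T$ in $\Sym(T/H)$ need not be primitive or even quasiprimitive, so the O'Nan--Scott theorem does not apply to them directly; taming such overgroups is exactly the hard CFSG-driven analysis that \cite{S2016} and \cite{X2021} carried out for specific stabilizers and that \cite{SX2021} was designed to circumvent. Relatedly, your plan never establishes the one fact on which everything hinges, namely that the covering transformation group is normal in $\Aut(\widetilde{\Gamma})$; without that you cannot project $\Aut(\widetilde{\Gamma})$ into $\Aut(\Gamma)$, and the assertion that an arc-reversing automorphism of the cover ``must be induced by an outer automorphism of $T$'' has no basis. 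Securing this normality --- using the simplicity of $T$, the structure of vertex stabilizers of edge-transitive tetravalent graphs (\cite{MN2001}, \cite{PV2010}), and a careful choice of the covering module --- is the actual technical content of the proof in \cite{SX2021}, and it is precisely the part your sketch leaves open.
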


Once Steps~\ref{enu:stepI} and~\ref{enu:stepII} are completed, we obtain a connected $4$-valent graph $\Gamma$ such that  $\Alt(H)$ is half-arc-transitive on $\Gamma$ with vertex stabilizer isomorphic to $H$. Since $\Alt(H)$ is nonabelian simple when $|H|\geq 5$, we can immediately conclude by~Theorem~\ref{thm:2.2} that $H$ is a $4$-HAT-stabilizer. Following this idea, Spiga and Xia~\cite{SX2021} showed that $D_8\times C_2^{m-3}~(m\geq 3)$, $\mathcal{H}_7$ and $\mathcal{H}_7\times C_2$ are all $4$-HAT-stabilizers.

Now the remaining difficulty in determining whether a concentric group $H$ is a $4$-HAT-stabilizer lies in proving $\langle R(H),x\rangle=\Alt(H)$. In this paper, we introduce a theory that treats $x$ as an undetermined parameter (as suggested in \cite[Page 2143]{S2016}) and systematically explore the existence of $x$ such that $\langle R(H),x\rangle=\Alt(H)$. This theory establishes a general framework for determining whether a concentric group is a $4$-HAT-stabilizer. Using this approach, we successfully prove that every tightly concentric group is a $4$-HAT-stabilizer (Theorem~\ref{thm:regular}). Furthermore, this theory offers a promising foundation for proving that every concentric group is a $4$-HAT-stabilizer. Thereby, we put forward the following conjecture, which seems to be widely believed but has not been explicitly stated in the literature.

\begin{conjecture}\label{conj:1.8}
      Every concentric group is a $4$-HAT-stabilizer, and therefore, a group is a $4$-HAT-stabilizer if and only if it is concentric.
\end{conjecture}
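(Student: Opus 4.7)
The plan is to execute the three-step strategy outlined in Subsection~\ref{subsec:1.3}, circumventing Step~\ref{enu:stepIII} via Theorem~\ref{thm:2.2}. Concretely, given a tightly concentric group $H=\langle a_1,\ldots,a_m\rangle$ of diameter $d$ with $3d>2m$, I aim to construct a permutation $x$ on the underlying set of $H$ so that, setting $G:=\langle R(H),x\rangle$, the group $G$ acts half-arc-transitively on a connected $4$-valent graph $\Gamma$ with vertex stabilizer isomorphic to $H$, and moreover $G=\Alt(H)$. Since $\Alt(H)$ is nonabelian simple once $m\geq 3$, Theorem~\ref{thm:2.2} then delivers $H$ as a $4$-HAT-stabilizer. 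Groups with $m\leq 8$ are already covered by Theorem~\ref{thm:2^8}, so I may freely assume $m\geq 9$.

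For Step~\ref{enu:stepI}, I would exploit the shift isomorphism $\varphi\colon a_i\mapsto a_{i+1}$ from Definition~\ref{def:concentric} to parametrise an involution $x$ on a natural transversal of $H$, and then realise $G$ via the coset-graph construction of \cite[Construction~3.3]{X2021}. The tightness conditions~\eqref{enu:C1} and~\eqref{enu:C2} ensure that the commutator relations of $H$ degenerate on the boundary of the parameter triangle to the single diagonal entry $\varepsilon_{d,0}=1$, making the prescribed action of $x$ on $H$ uniform in $(m,d)$. Under this setup, verifying that $G$ is transitive with non-self-paired suborbit of length $2$ and point stabilizer isomorphic to $H$, and that the resulting orbital graph $\Gamma$ is connected and $4$-valent, reduces to a finite list of identities in $H$ that can be read off from Definition~\ref{def:rel}.

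The hard part is Step~\ref{enu:stepII}, namely proving $G=\Alt(H)$. The inclusion $G\leq\Alt(H)$ can be arranged at the construction stage: every element of $R(H)$ is a disjoint union of $|H|/|h|$ cycles of equal length $|h|\mid 2^m$, hence an even permutation whenever $m\geq 2$, and $x$ can be designed to be even as well. The reverse inclusion is the deep point, and I would address it through the O'Nan--Scott Theorem together with the Classification of Finite Simple Groups. Since $G$ is a transitive subgroup of $\Sym(H)$ containing the regular subgroup $R(H)\cong H$ of $2$-power order, imprimitivity of $G$ forces a block system of $2$-power size, which I would rule out using the commutator pattern carried by the parameters $\varepsilon_{i,j}$ under the tightness constraints; in the primitive case, the affine type is excluded because $H$ is nonabelian (as $d<m$ for every tightly concentric group), and the remaining almost-simple, diagonal, and product-action types are eliminated by coupling the suborbit structure prescribed in Step~\ref{enu:stepI} with bounds on the point-stabilizer order, in the spirit of the argument of \cite{X2021}. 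The novel contribution would be to execute these exclusions in a format that takes the tightness data as input and produces the conclusion uniformly in $(m,d)$, rather than group-by-group.

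Once $G=\Alt(H)$ is secured, $\Alt(H)$ sits inside $\Aut(\Gamma)$ as a nonabelian simple half-arc-transitive subgroup with vertex stabilizer isomorphic to $H$, and Theorem~\ref{thm:2.2} immediately yields the conclusion. I expect the bulk of the technical work to lie in Step~\ref{enu:stepII}, since Step~\ref{enu:stepI} is essentially a construction and Step~\ref{enu:stepIII} is bypassed entirely.
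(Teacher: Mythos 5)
The statement you are addressing is Conjecture~\ref{conj:1.8}, which asserts that \emph{every} concentric group is a $4$-HAT-stabilizer. The paper does not prove this statement; it is posed explicitly as an open conjecture, and the result actually established (Theorem~\ref{thm:regular}) covers only the \emph{tightly} concentric groups, i.e.\ those with $3d>2m$ whose boundary parameters satisfy~\eqref{enu:C1} and~\eqref{enu:C2}. Your proposal opens by fixing ``a tightly concentric group $H$ of diameter $d$ with $3d>2m$'' and never leaves that subclass, so even if every step were fully carried out you would have proved only the special case. Concentric groups with $3d=2m$, or with nonzero parameters on the top or right boundary of the parameter triangle, are not addressed at all, and the paper itself remarks that implementing its framework for them ``may be more intricate'' and leaves this as future work. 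This is the decisive gap: an argument quantified over tightly concentric groups cannot establish the conjecture.

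Within the tightly concentric case your outline does follow the paper's strategy (Steps~\ref{enu:stepI}--\ref{enu:stepII} plus Theorem~\ref{thm:2.2} to bypass Step~\ref{enu:stepIII}), but it is a plan rather than a proof, and one of its announced exclusions is wrong. You claim the affine O'Nan--Scott type ``is excluded because $H$ is nonabelian.'' In the coordinates of Lemma~\ref{lm:multiplication}, right multiplication by any fixed element of $H$ acts on $\FF_2^m$ as a linear map plus a constant, so $R(H)\leq \AGL_m(2)$ for \emph{every} concentric group, abelian or not (likewise $R(D_8)\leq\AGL_3(2)$); nonabelianness of the regular subgroup is therefore useless here, and the affine case can only be excluded through the choice of $x_\tau$. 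The paper devotes Section~\ref{sec:Affine} to exactly this, producing an element of $\langle R(H),x_\tau\rangle$ whose number of fixed points is odd and greater than $1$ (Lemma~\ref{lm:affine}, Proposition~\ref{prop:affine}), which is incompatible with containment in $\AGL_m(2)$. Similarly, your primitivity, product-action and imprimitivity exclusions are asserted ``in the spirit of'' earlier work without the core technical content of Sections~\ref{sec:primitive}--\ref{sec:proof}: exhibiting, for a single $\tau$, simultaneous solutions to all the constraints that defeat every non-alternating possibility in the O'Nan--Scott analysis. That simultaneous solvability is the substance of the paper's proof and cannot be taken for granted.
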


For a concentric group $H=\langle a_1,\ldots,a_m\rangle$, let $\varphi$ be the group isomorphism defined in Definition~\ref{def:concentric}. According to Theorem~\ref{thm:2^8}, we may assume that $m\geq 9$. In the remainder of this section, we outline our general framework for proving that $H$ is a $4$-HAT-stabilizer, which consists of some established foundations and four guiding steps. 

For each $\tau \in a_1\langle a_2,\ldots,a_m\rangle$, define $x_{\tau}\in \Sym(H)$ by letting
\begin{equation}\label{eq:phi}
      h^{x_\tau}=h^{\varphi}\, \text{ and } \,(a_mh)^{x_\tau}=\tau h^{\varphi}\  \text{ for } h\in \langle a_1,\ldots,a_{m-1}\rangle.
\end{equation}
By Lemma~\ref{lm:condition}, we deduce from~\cite[Theorem~1.1]{MN2001} that for each $\tau \in a_1\langle a_2,\ldots,a_m\rangle$, there exists a $4$-valent graph such that $\langle R(H),x_\tau\rangle$ is half-arc-transitive with vertex stabilizer isomorphic to $H$. Then by Theorem~\ref{thm:2.2}, to show that $H$ is a $4$-HAT-stabilizer, we only need to prove that there exists $\tau\in a_1\langle a_2,\ldots,a_m\rangle$ such that $\langle R(H),x_\tau\rangle=\Alt(H)$. Since for each $\tau\in a_1\langle a_2,\ldots,a_m\rangle$, the group $\langle R(H),x_\tau\rangle$ is contained in $\Alt(H)$ (see Lemma~\ref{lm:SubAlt}) and is neither a primitive almost simple group nor a primitive group of diagonal type (see Lemma~\ref{lm:AS and Dia}), we proceed to complete the proof in the following four steps: 

\begin{enumerate}[\textsf{Step} 1.]
      \item \label{enu:step1} Find a subset $I_1\subseteq a_1\langle a_2,\ldots,a_m\rangle$ as large as possible such that $\langle R(H),x_\tau\rangle$ is a primitive permutation group on $H$ for each $\tau \in I_1$.
      \item \label{enu:step2} Find a subset $I_2\subseteq a_1\langle a_2,\ldots,a_m\rangle$ as large as possible such that no affine subgroup of $\Sym(H)$ contains $\langle R(H),x_\tau\rangle$ for any $\tau \in I_2$.
      \item \label{enu:step3} Find a subset $I_3\subseteq a_1\langle a_2,\ldots,a_m\rangle$ as large as possible such that $\langle R(H),x_\tau\rangle$ is not a subgroup of a wreath product in $\Sym(H)$ for any $\tau \in I_3$.
      \item \label{enu:step4} Verify that $I_1\cap I_2\cap I_3\neq \emptyset$ and conclude that there exists $\tau\in a_1\langle a_2,\ldots,a_m\rangle$ such that $\langle R(H),x_\tau\rangle=\Alt(H)$ by applying  all the results above and O'Nan-Scott Theorem.
\end{enumerate}


After the preparation in Sections~\ref{sec:preliminary} and~\ref{sec:calculation}, we demonstrate the implementation of Steps~\ref{enu:step1}--\ref{enu:step4} in Sections~\ref{sec:primitive}--\ref{sec:proof}, which finally leads to a proof of Theorem~\ref{thm:regular}. For concentric groups that are not tightly concentric, it is worth remarking that our general framework still applies; however, the implementation of Steps~\ref{enu:step1}--\ref{enu:step4} may be more intricate. Therefore, there is significant room for further research on this topic.

\section{Preliminaries}\label{sec:preliminary}
In this section, we set up the notation needed in this paper, present key properties of concentric groups, prove some foundational results of our theory and introduce coordinate forms of concentric groups.

\subsection{Notation}\label{subsec:2.1}
For a group $G$, let $G'$ denote the commutator subgroup of $G$, and  $Z(G)$ the center of $G$. Denote by $\Sym(\Omega)$ the symmetric group on the set $\Omega$. For $\pi\in\Sym(\Omega)$, define $\mathrm{Supp}(\pi)$ as the subset of $\Omega$ consisting of all the points that are not fixed by $\pi$. Denote by $\GL_n(q)$ the general linear group of degree $n$ over $\FF_q$ and let $\AGL_n(q)$ be the affine group on $\FF_q^n$. Use $\FF_2[w_1,\ldots,w_k]$ to represent the algebra over $\FF_2$ generated by the variables $w_1,\ldots, w_k$.

We fix the following notation throughout the paper. Let $H=\langle a_1,\ldots,a_m\rangle$ be a concentric group, let $d$ denote the diameter of $H$, and let 
\[
      d'=m-d.
\]
As mentioned after Definition~\ref{def:rel}, $H$ is isomorphic to $C_2^m$ or $D_8\times C_2^{m-3}$ if $d'=0$ or $1$, respectively. Such concentric groups $H$ are already known to be  $4$-HAT-stabilizers in~\cite{M1998} and~\cite{SX2021}. Thus we assume that $d'\geq 2$. Moreover, in view of Theorem~\ref{thm:2^8} (\cite[Theorem~1.4]{SX2021}), we assume that $m\geq 9$. Set
\begin{equation}\label{eq:H_{i,j}}
      H_{i,j}=\langle a_i,\ldots,a_j\rangle \  \text{ for }\ 1\leq i\leq j\leq m.
\end{equation}
We use $\varepsilon_{i,j}$ to represent the parameters of $H$, as given in Definition~\ref{def:rel}. Define
\[
      \varphi:H_{1,m-1}\to H_{2,m}
\]
as the group isomorphism mapping $a_i$ to $a_{i+1}$ for each $i\in \{1,\ldots,m-1\}$, and extend $\varphi$ to a permutation $\phi$ on $H$ by letting
\begin{equation}\label{eq:perm phi}
      h^\phi=h^\varphi\, \text{ and } \, (ha_m)^\phi=a_1h^\varphi\  \text{ for } h\in H_{1,m-1}.
\end{equation}

As in~\eqref{eq:phi}, for each $\tau\in a_1H_{2,m}$, we define $x_{\tau}$ to be the permutation on $H$ given by 
\[
      h^{x_\tau}=h^{\varphi}\, \text{ and } \,(a_mh)^{x_\tau}=\tau h^{\varphi}\  \text{ for } h\in H_{1,m-1}.
\]
Also, define a permutation $y_\tau$ on $H$ as follows:
\begin{equation}\label{eq:y}
      y_\tau=\big(x_\tau R(\tau)x_\tau^{-1} R(a_m)\big)^2.
\end{equation}
To simplify calculations, we assume $\tau\in a_1H_{2,d+1}$ for the rest of the paper unless specified otherwise (in fact, we only extend the range of $\tau$ to $a_1H_{2,m}$ in Subsection~\ref{subsec:2.3}).

\subsection{Concentric groups}
The following lemma gathers key properties of concentric groups, which we will apply repeatedly. For brevity, we will reference the lemma only when a reminder may benefit the reader.

\begin{lemma}\label{lm:concentric}
      The following statements hold:
      \begin{enumerate}[\rm(a)]
      \item \label{enu:0concentric} $|H_{i,j}|=2^{j-i-1}$ for $1\leq i\leq j\leq m$, and in particular, $|H|=2^m$;
      \item \label{enu:1concentric} $H'\leq Z(H)=H_{d'+1,d}\cong C_2^{d-d'}$, and $d-d'\geq d'$;
      \item \label{enu:2concentric} $[a_i,a_j]=[a_j,a_i]$ for each $i,j\in \{1,\ldots,m\}$;
      \item \label{enu:3concentric} for each $h_1,h_2,h_3\in H$, we have
      \[
            [h_1h_2,h_3]=[h_1,h_3][h_2,h_3]\ \text{ and }\ [h_1,h_2h_3]=[h_1,h_2][h_1,h_3];
      \]
      \item \label{enu:4concentric} every element of $H$ can be written as a word $a_1^{\alpha_1}\dots a_m^{\alpha_m}$
      for a unique $(\alpha_1,\ldots,\alpha_{m})\in \mathbb{F}_2^m$;
      \item \label{enu:5concentric} $\{h^2\mid h\in H\}\subseteq H'$;
      \item \label{enu:6concentric} the exponent of $H$ is $2$ or $4$. 
      \end{enumerate}
\end{lemma}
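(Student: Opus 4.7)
Part (a) is immediate from Definition~\ref{def:concentric}, and shows that each $a_i$ is a nontrivial involution (as $|H_{i,i}| = 2$). Part (e) then follows by induction along the subgroup chain $H_{1,m} > H_{2,m} > \cdots > H_{m,m} > 1$, whose steps all have index $2$ by (a): every element of $H_{i,m}$ writes uniquely as $a_i^{\alpha_i} h$ with $h \in H_{i+1,m}$, and iterating yields the normal form $a_1^{\alpha_1} \cdots a_m^{\alpha_m}$.

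The heart of the argument is part (b). First, for $i \in \{d'+1,\ldots,d\}$ and any $j \in \{1,\ldots,m\}$, the bound $|i-j| \leq \max(i-1, m-i) \leq d-1 < d$ forces $a_i$ to commute with $a_j$; hence $H_{d'+1,d} \leq Z(H)$. The generators of $H_{d'+1,d}$ pairwise commute (indices within $d-d'-1 < d$) and are involutions, so $H_{d'+1,d}$ is abelian of exponent $2$, giving $H_{d'+1,d} \cong C_2^{d-d'}$ together with (a). The bound $d-d' \geq d'$ is a rewriting of $3d \geq 2m$, the defining constraint in Definition~\ref{def:rel}. For $H' \leq H_{d'+1,d}$, the third bullet of Definition~\ref{def:rel} places every $[a_i,a_j]$ inside $H_{d'+1,d} \leq Z(H)$, so the universal identity $[xy,z] = [x,z]^y[y,z]$ collapses to $[xy,z] = [x,z][y,z]$ (and similarly for the mirror); iterating expresses every commutator of elements of $H$ as a product of $[a_i,a_j]$'s, all lying in $H_{d'+1,d}$. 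This chain simultaneously proves (c) --- since $[a_i,a_j] \in H_{d'+1,d}$ has order at most $2$ and so equals $[a_j,a_i] = [a_i,a_j]^{-1}$ --- and (d) --- since $[x,z] \in H' \leq Z(H)$ trivializes the conjugation.

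The remaining direction $Z(H) \leq H_{d'+1,d}$ is the main obstacle I foresee. I plan to argue by contradiction: suppose $h = a_1^{\alpha_1} \cdots a_m^{\alpha_m} \in Z(H)$ has $\alpha_{i_0} = 1$ for some $i_0 \leq d'$, chosen minimal (the case $i_0 > d$ is symmetric via $\varphi^{-1}$). Then $\alpha_j = 0$ for $j < i_0$, and part (d) expands $[h, a_{i_0+d}] = \prod_{j \geq i_0}[a_j, a_{i_0+d}]^{\alpha_j}$. The factor $[a_j, a_{i_0+d}]$ vanishes for $i_0 < j < i_0 + 2d$, and the tail $j \geq i_0 + 2d$ is empty because $d \geq 2m/3$ forces $i_0 + 2d > m$. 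Hence $[h, a_{i_0+d}] = [a_{i_0}, a_{i_0+d}]$. By the definition of the diameter some $[a_i, a_{i+d}] \neq 1$, and the isomorphism $\varphi$ --- sending $[a_i, a_{i+d}]$ to $[a_{i+1}, a_{i+d+1}]$ --- propagates this nontriviality to every $i \in \{1, \ldots, d'\}$, so $[a_{i_0}, a_{i_0+d}] \neq 1$. This contradicts $h \in Z(H)$.

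Finally, (f) follows because $H/H'$ is abelian and generated by images of involutions, hence of exponent $2$, so $h^2 \in H'$ for each $h \in H$; and (g) combines (f) with the exponent-$2$ property of $H' \leq H_{d'+1,d}$ to give $h^4 = (h^2)^2 = 1$, i.e.\ the exponent of $H$ divides $4$.
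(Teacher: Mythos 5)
Your proof is correct and follows essentially the same route as the paper, whose own proof simply asserts that parts (a)--(e) follow from Definitions~\ref{def:concentric} and~\ref{def:rel} and only writes out the computation for (f); your derivation of (f) from $H/H'$ being generated by images of involutions is the same idea in different packaging, and (g) is obtained identically. The one substantive detail you supply that the paper omits entirely is the reverse inclusion $Z(H)\leq H_{d'+1,d}$, and your argument for it (take the minimal offending index $i_0\leq d'$, commutate with $a_{i_0+d}$, and propagate the nontriviality of some $[a_i,a_{i+d}]$ along $\varphi$ to conclude $[a_{i_0},a_{i_0+d}]\neq 1$) is sound, as is the symmetric case $i_0>d$.
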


\begin{proof}
      Statement~\eqref{enu:0concentric} follows from Definition~\ref{def:concentric}, and statements~\eqref{enu:1concentric}--\eqref{enu:4concentric} can be easily obtained from Definition~\ref{def:rel}. Now we prove~\eqref{enu:5concentric}, which together with~\eqref{enu:1concentric} will imply~\eqref{enu:6concentric}. Take an arbitrary $h\in H$. By statement~\eqref{enu:4concentric}, we can write $h=a_1^{\alpha_1}\dots a_m^{\alpha_m}$ for some $(\alpha_1,\ldots,\alpha_{m})\in \mathbb{F}_2^m$. Since $H'\leq Z(H)$ and $a_i^2=1$ for each $i$, we obtain
      \[
            h^2=a_1^{\alpha_1}\dots a_m^{\alpha_m}a_1^{\alpha_1}\dots a_m^{\alpha_m}=a_1^{2\alpha_1}\dots a_m^{2\alpha_m}\prod_{i=1}^{m}\prod_{j=i+1}^m [a_i^{\alpha_i},a_j^{\alpha_j}]=\prod_{i=1}^{m}\prod_{j=i+1}^m [a_i^{\alpha_i},a_j^{\alpha_j}]\in H'.
      \]
      This gives $\{h^2\mid h\in H\}\subseteq H'$, completing the proof.
\end{proof}
Figure~\ref{fig:concentric} illustrates that the center $Z(H)$ of a concentric group $H$ is generated by $a_i$'s in the `middle section'. Since $d\geq 2m/3$, we have $|Z(H)|=2^{d-d'}=2^{2d-m}\geq 2^{m/3}=|H|^{1/3}$, indicating that concentric groups possess a large center.

\vspace{5pt}
\begin{figure}[h!]
      \centering
      \begin{tikzpicture}
            \filldraw 
            (0,0) circle (2pt) node [above] {$a_1$} -- 
            (3,0) circle (2pt) node [above] {$a_{d'}$} -- 
            (4,0) circle (2pt) node [below] {$a_{d'+1}$} -- 
            (9,0) circle (2pt) node [below] {$a_d$} -- 
            (10,0) circle (2pt) node [above] {$a_{d+1}$} --
            (13,0)circle (2pt) node [above] {$a_m$};
            \draw [decorate,decoration={brace,amplitude=7pt,mirror,raise=3.5ex}]
            (4,0) -- (9,0) node[midway,yshift=-3em]{$Z(H)$};
      \end{tikzpicture}
      \caption{Illustration of concentric groups}
      \label{fig:concentric}
\end{figure}
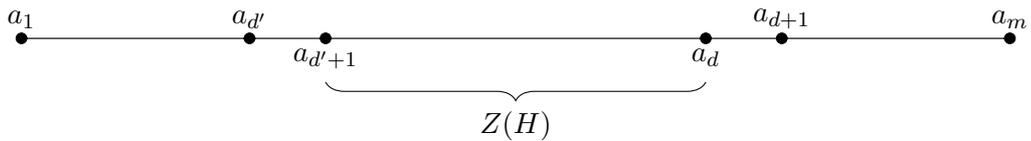

To help the reader follow the calculations in Section~\ref{sec:calculation}, we restate the three conditions in Definition~\ref{def:rel} alongside Figure~\ref{fig:concentric}, using the notation defined in Subsection~\ref{subsec:2.1}.
\begin{align}
      &a_1^2=\dots=a_m^2=1; \tag{R1}\label{enu:R1}\\
      &[a_i,a_j]=1\ \text{ whenever }\ |i-j|<d; \tag{R2}\label{enu:R2}\\
      &[a_i,a_j]=a_{d'+i}^{\varepsilon_{j-i,0}}\dots a_{j-d'}^{\varepsilon_{j-i,j-i-2d'}}\ \text{ whenever }\ i\in\{1,\ldots,d'\}\ \text{ and }\ j\in \{d+i,\ldots,m\}. \tag{R3}\label{enu:R3}
\end{align}

\subsection{Foundation of the theory}\label{subsec:2.3}
We now present the three lemmas referenced in Subsection~\ref{subsec:1.3}. 

\begin{lemma}\label{lm:condition}
      For each $\tau \in a_1H_{2,m}$ and $i\in \{1,\ldots,m-1\}$, we have $x_\tau^{-1}R(a_i)x_\tau=R(a_{i+1})$.
\end{lemma}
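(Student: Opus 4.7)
The plan is a direct calculation from the definition of $x_\tau$. It is equivalent to prove that $R(a_i)\, x_\tau = x_\tau\, R(a_{i+1})$ as permutations on $H$, which amounts to showing that for every $g\in H$,
\[
      (g a_i)^{x_\tau} \;=\; g^{x_\tau}\, a_{i+1}.
\]
The key structural fact I will use is the partition $H = H_{1,m-1} \sqcup a_m H_{1,m-1}$ (this is a set-theoretic disjoint union by Lemma~\ref{lm:concentric}\eqref{enu:0concentric}), together with the observation that since $i\leq m-1$, we have $a_i\in H_{1,m-1}$, so right multiplication by $a_i$ preserves both parts of the partition.

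I will verify the identity on each part separately. For $g=h\in H_{1,m-1}$, the product $ha_i$ again lies in $H_{1,m-1}$, so the defining clause $h^{x_\tau}=h^\varphi$ applies on both sides; since $\varphi$ is a group homomorphism with $a_i^\varphi=a_{i+1}$, both sides evaluate to $h^\varphi a_{i+1}$. For $g=a_m h$ with $h\in H_{1,m-1}$, write $ha_i=h'\in H_{1,m-1}$; then $ga_i=a_m h'$, and applying the second clause of the definition of $x_\tau$ yields $(a_m h')^{x_\tau}=\tau (h')^\varphi=\tau(ha_i)^\varphi=\tau h^\varphi a_{i+1}$, which matches $g^{x_\tau}a_{i+1}=\tau h^\varphi a_{i+1}$.

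There is no real obstacle here: the lemma is essentially a book-keeping check that the permutation $x_\tau$, which was defined on the two pieces $H_{1,m-1}$ and $a_m H_{1,m-1}$ separately, conjugates $R(a_i)$ correctly because right multiplication by $a_i$ respects that very decomposition and because $\varphi$ carries $a_i$ to $a_{i+1}$. The proof is therefore a two-case verification and requires no input beyond the definition \eqref{eq:phi} of $x_\tau$, the homomorphism property of $\varphi$, and the partition of $H$; notably, nothing about $\tau$ beyond its appearance in the second clause is used, which is why the conclusion holds uniformly for every $\tau \in a_1 H_{2,m}$.
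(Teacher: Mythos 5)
Your proof is correct and is essentially the paper's argument: both are a two-case verification of the conjugation identity directly from the definition \eqref{eq:phi} of $x_\tau$ together with the fact that $\varphi$ is a homomorphism sending $a_i$ to $a_{i+1}$. The only cosmetic difference is that the paper evaluates $h^{x_\tau^{-1}R(a_i)x_\tau}$ on an element written as $\tau^{\alpha_1}h_1$ with $h_1\in H_{2,m}$ (the coset decomposition adapted to $x_\tau^{-1}$), whereas you verify the equivalent identity $R(a_i)x_\tau=x_\tau R(a_{i+1})$ on the decomposition $H=H_{1,m-1}\sqcup a_mH_{1,m-1}$ adapted to $x_\tau$, which lets you avoid writing down $x_\tau^{-1}$ explicitly.
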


\begin{proof}
      Take an arbitrary $h\in H$. Then $h=\tau^{\alpha_1}h_1$ for some $\alpha_1\in \FF_2$ and $h_1\in H_{2,m}$, and thus,
      \[
            h^{x_\tau^{-1}R(a_i)x_\tau}
            =(\tau^{\alpha_1}h_1)^{x_\tau^{-1}R(a_i)x_\tau}
            =(a_m^{\alpha_1}h_1^{\varphi^{-1}}a_i)^{x_\tau}
            =\tau^{\alpha_1}(h_1^{\varphi^{-1}}a_i)^\varphi
            =\tau^{\alpha_1}h_1a_{i+1}
            =h^{R(a_{i+1})}.
      \]
      Hence $x_\tau^{-1}R(a_i)x_\tau=R(a_{i+1})$, as required.
\end{proof}

\begin{lemma}\label{lm:SubAlt}
      For each $\tau \in a_1H_{2,m}$, the group $\langle R(H),x_\tau\rangle$ is a subgroup of $\Alt(H)$.
\end{lemma}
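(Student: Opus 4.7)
The plan is to show $R(H) \le \Alt(H)$ and $x_\tau \in \Alt(H)$ separately, from which the lemma is immediate. For $R(H)$, Lemma~\ref{lm:concentric}\eqref{enu:6concentric} tells us that every $h \in H$ has order in $\{1,2,4\}$, so the right regular action decomposes $R(h)$ into $|H|/|h|$ disjoint cycles of length $|h|$. When $|h|=2$ this yields $2^{m-1}$ transpositions; when $|h|=4$ it yields $2^{m-2}$ four-cycles (each of sign $-1$). Both counts are even because $m \ge 9$, so $R(H) \le \Alt(H)$.

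For $x_\tau$ the idea is to use the fixed reference permutation $\phi$ from~\eqref{eq:perm phi}. Writing each element of $H$ in the standard form $a_1^{\alpha_1}\cdots a_m^{\alpha_m}$ of Lemma~\ref{lm:concentric}\eqref{enu:4concentric}, a direct check on the two cosets of $H_{1,m-1}$ shows that $\phi$ acts on coordinates as the cyclic shift $(\alpha_1,\ldots,\alpha_m) \mapsto (\alpha_m, \alpha_1, \ldots, \alpha_{m-1})$: the key point is that both $h^\varphi = a_2^{\alpha_1}\cdots a_m^{\alpha_{m-1}}$ and $a_1 h^\varphi$ are already in standard form, requiring no commutator corrections. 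The cyclic shift on $\FF_2^m$ has cycle lengths dividing $m$; for $m$ odd every such length is odd, and for $m$ even a short M\"obius-type count of periodic points shows that the number of even-length cycles is itself even once $m \ge 4$. Hence $\phi \in \Alt(H)$.

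For general $\tau$, I would show $\pi := x_\tau \phi^{-1} \in \Alt(H)$. Since $x_\tau$ and $\phi$ agree on $H_{1,m-1}$, $\pi$ fixes $H_{1,m-1}$ pointwise and only permutes the coset $a_m H_{1,m-1}$. Writing $\tau = a_1\tau'$ with $\tau' \in H_{2,m}$ and setting $\tau_0 := \varphi^{-1}(\tau') \in H_{1,m-1}$, and using $a_m^2 = 1$ together with $H' \le Z(H)$ in the form $h^{a_m} = [a_m,h]h$, a short computation shows that, under the bijection $a_m h \leftrightarrow h$, the restriction of $\pi$ to $a_m H_{1,m-1}$ becomes the composition of left multiplication by $\tau_0$ on $H_{1,m-1}$ with the map $u \mapsto u[a_m,u]$. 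Left multiplication by $\tau_0$ is a regular action whose cycles all have length $|\tau_0| \in \{1,2,4\}$ and so is even for $m \ge 4$. The map $u \mapsto u[a_m,u]$ preserves each fibre of the projection of $H_{1,m-1}$ onto its first $d'$ coordinates --- since $[a_m,u] \in H_{d'+1,d} = Z(H)$ depends only on those coordinates --- and acts on each fibre as a translation of $\FF_2^{m-1-d'}$, which is either trivial or a fixed-point-free involution with $2^{m-2-d'}$ transpositions; that count is even because $d \ge 2m/3 \ge 6$ forces $m - 2 - d' = d - 2 \ge 4$. Combining, $\pi$ is even, so $x_\tau = \pi\phi \in \Alt(H)$.

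The main technical hurdle I foresee is setting up the formula for $\pi$ on $a_m H_{1,m-1}$ cleanly: all the noncommutative bookkeeping has to be absorbed into central commutators so that the parity calculation splits into the two simple fibrewise pieces above. Once that formula is in hand, the remaining verifications are routine parity arithmetic.
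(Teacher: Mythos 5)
Your argument is correct, but it reaches the conclusion by a genuinely different route than the paper. The treatment of $R(H)$ is the same in substance (a product of $|H|/|h|$ cycles of length $|h|\in\{1,2,4\}$ is even). For the parity of $x_\tau$, however, the paper deliberately avoids ever computing the sign of the shift $\phi$: it sandwiches $x_\tau$ between two explicitly even correction permutations $\zeta$ and $\xi$, each supported on a single coset, so that the product $w=\xi x_\tau\zeta$ acts on the two cosets $H_{2,m}$ and $a_1H_{2,m}$ with identical cycle types and is therefore automatically even. You instead factor $x_\tau=\pi\phi$, and your analysis of $\pi$ is sound: the formula $(a_mh)^{\pi}=\tau_0 h a_m=a_m\,(\tau_0h)[a_m,\tau_0h]$ is right, multiplication by an element of $H_{d'+1,d}=Z(H)$ really is coordinatewise addition (the relevant $\lambda_\ell$ vanish because the first $d'$ coordinates of a central element are zero), and the fibrewise translation counts $2^{d-2}$ and the regular-action counts $2^{m-1}/|\tau_0|$ are even for $m\geq 9$. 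The price of your decomposition is the one step you assert rather than prove: that the coordinate cyclic shift on $\FF_2^m$ is even for $m\geq 4$. This is true and provable (e.g.\ write $m=2^a u$ with $u$ odd; the sign of $\phi$ equals the sign of $\phi^{u}$, whose cycles of length $2^j$ number $2^{u2^{j-1}-j}(2^{u2^{j-1}}-1)$, and summing these over $j\geq 1$ gives an even total once $m\geq 3$), but it is a nontrivial necklace-parity fact that the paper's $\zeta,\xi$ trick is designed precisely to sidestep. In short: your route isolates more transparently where the non-commutativity of $H$ enters (entirely in central fibrewise translations), while the paper's route is shorter because the symmetry between the two cosets of $H_{2,m}$ does all the parity bookkeeping at once. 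You should supply the shift-parity argument explicitly if you keep your version.
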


\begin{proof}
      We first show that $R(H)\leq \Alt(H)$. For each $h\in H$, let $R(h)$ be the permutation on $H$ defined by $b^{R(h)}=bh$ for all $b\in H$. It follows that $R(h)$ is a product of $|H|/|h|$ disjoint cycles of length $|h|$, where $|h|$ is the order of $h$. Since $H$ is not a cyclic group and $|H|=2^m$, we obtain that $|H|/|h|$ is even, and thus, $R(H)\leq\Alt(H)$.

      Next we prove that $x_\tau$ is an even permutation on $H$. Let $L$ be the left multiplication permutation representation of $H$, and let $\zeta$ be the map on $H$ defined by 
      \[
            \zeta|_{H_{2,m}}=\mathrm{id}\quad \text{and}\quad \zeta|_{a_1H_{2,m}}=L(a_1\tau^{-1})|_{a_1H_{2,m}}.
      \]
      Clearly, $\zeta$ is a permutation on $H$ as $a_1\tau^{-1}\in H_{2,m}$. Similarly to the argument in the preceding paragraph, we conclude that $L(a_1\tau^{-1})|_{a_1H_{2,m}}$ is an even permutation on $a_1H_{2,m}$. Therefore $\zeta$ is an even permutation on $H$. Let $\xi$ be a permutation on $H$ defined by 
      \[
            h^{\xi}=h,\;\; (a_mh)^\xi=a_1h,\;\; (a_1h)^\xi=a_mh\;\; \text{and}\;\; (a_1a_mh)^\xi=a_ma_1h\ \text{ for }h\in H_{2,m-1}.
      \]
      Then $\xi|_{a_1H_{2,m-1}\cup a_mH_{2,m-1}}$ is a product of $|H_{2,m-1}|=2^{m-2}$ disjoint transpositions, and
      \[
           \xi|_{a_1a_mH_{2,m-1}}=L((a_ma_1)^2)|_{a_1a_mH_{2,m-1}} 
      \]
      is an even permutation on $a_1a_mH_{2,m-1}$. Hence $\xi$ is an even permutation on $H$. To complete the proof, it suffices to show that $w:=\xi x_\tau\zeta$ is an even permutation on $H$. In fact, a straightforward calculation gives that 
      \[
            h^w=h^\varphi,\;\; (a_mh)^w=a_2h^\varphi,\;\; (a_1h)^w=a_1h^\varphi\;\; \text{and}\;\; (a_1a_mh)^w=a_1a_2h^\varphi\ \text{ for }h\in H_{2,m-1},
      \]
      from which we observe that $w|_{H_{2,m}}$ and $w|_{a_1H_{2,m}}$ are permutations on $H_{2,m}$ and $a_1H_{2,m}$, respectively, with the same cycle type. Therefore, $w$ is an even permutation, as desired.
\end{proof}

\begin{lemma}\label{lm:AS and Dia}
      Suppose that $m\geq 9$. Then the subgroup $\langle R(H),x_\tau\rangle$ of $\Sym(H)$ is neither a primitive almost simple group nor a primitive group of diagonal type for any $\tau \in a_1H_{2,m}$.
\end{lemma}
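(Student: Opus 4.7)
The plan is to apply the O'Nan--Scott Theorem together with classical results on primitive permutation groups of $2$-power degree, exploiting that $R(H)$ is a regular $2$-subgroup of $G:=\langle R(H),x_\tau\rangle$ of order $|H|=2^m$. The two types are handled by very different arguments.

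The diagonal case is immediate. A primitive group of diagonal type has socle $T^k$ and degree $|T|^{k-1}$ for some nonabelian simple $T$ and some $k\geq 2$. Equating $|T|^{k-1}$ with $2^m$ would force $|T|$ to be a power of~$2$, contradicting Burnside's $p^aq^b$-theorem, which guarantees that every nonabelian finite simple group has order divisible by at least three distinct primes. The same reasoning handles any compound diagonal refinement.

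For the almost simple case, the factorisation $G=R(H)G_\alpha$ arising from the regularity of $R(H)$ shows that the socle $T$ of $G$ is transitive on $H$ and hence has a subgroup of index $2^m$. Guralnick's classification of subgroups of prime power index in nonabelian simple groups, combined with $m\geq 9$ to eliminate the sporadic small items (of indices $11,12,23,27$) in that list, leaves only two possibilities: either $T=A_{2^m}$ in its natural action on $2^m$ points (which forces $G=\Alt(H)$ by Lemma~\ref{lm:SubAlt} and is the target of Step~\ref{enu:step4} rather than a case to be excluded here), or $T=\PSL_n(q)$ on projective points with $(q^n-1)/(q-1)=2^m$. In the second possibility, Zsigmondy's theorem on primitive prime divisors of $q^n-1$ immediately forces $n=2$ and hence $q=2^m-1$, a Mersenne prime power. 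This remaining subcase is then killed by an exponent argument: $|T|_2=2^m$ forces $R(H)$ to be, up to index~$2$, a Sylow $2$-subgroup of $G\in\{\PSL_2(q),\PGL_2(q)\}$, hence dihedral or cyclic of order at least $2^m$ and of exponent at least $2^{m-1}\geq 2^8$, contradicting Lemma~\ref{lm:concentric}\eqref{enu:6concentric} which bounds the exponent of $H$ by~$4$. The main obstacle I anticipate is carrying out the Guralnick/Zsigmondy reduction cleanly and handling the $G=\Aut(T)$ subcase of the Sylow analysis correctly; once the group is pinned down, the exponent step reduces to a short structural check.
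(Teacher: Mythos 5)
Your argument is correct in substance but takes a genuinely different route from the paper's. On the diagonal type the two proofs coincide: the degree of such a group is a positive power of $|T|$ for a nonabelian simple $T$, which cannot be $2^m$; you make explicit (via Burnside) the step the paper leaves implicit. On the almost simple type the paper disposes of everything in one stroke by citing the Liebeck--Praeger--Saxl classification of regular subgroups of primitive groups \cite[Theorem~1.1]{LPS2010}: the regular $2$-subgroup $R(H)$ forces $H$ to be metacyclic with socle $\PSL_2(q)$, $q$ a Mersenne prime, and a metacyclic group of exponent at most $4$ (Lemma~\ref{lm:concentric}\eqref{enu:6concentric}) has order at most $16$. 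You reach the same two candidates --- $A_{2^m}$ in its natural action, or $\PSL_2(q)$ with $q+1=2^m$ --- from the weaker input that the (transitive) socle has a subgroup of index $2^m$, using Guralnick's theorem plus Zsigmondy, and your Sylow/dihedral exponent analysis in $\PSL_2(q)\le G\le\PGL_2(q)$ is a concrete instance of the paper's ``metacyclic of exponent $\le 4$'' contradiction. Your route trades the heavy LPS memoir for lighter classical tools at the cost of a few extra (easy) verifications: that the socle is transitive, that $q=2^m-1$ is genuinely prime so that $\Aut(T)=\PGL_2(q)$, and the small Zsigmondy exceptions. (Minor slip: $12$ is not a prime power, so no index-$12$ case appears in Guralnick's list.)

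One point you raise deserves emphasis rather than correction. You explicitly decline to exclude $\Soc(G)=A_{2^m}$ in its natural action, noting that it forces $G=\Alt(H)$, which is the target of Step~4. This is the right reading: $\Alt(H)$ is itself a primitive almost simple group, so the lemma cannot hold verbatim for those $\tau$ with $\langle R(H),x_\tau\rangle=\Alt(H)$, whose existence is exactly what Theorem~\ref{thm:regular} establishes. The paper's own proof passes over this case silently --- \cite[Theorem~1.1]{LPS2010} yields ``$H$ metacyclic'' only after the natural action of $A_{2^m}$ is set aside --- so what both proofs actually deliver, and what Section~\ref{sec:proof} actually uses, is the statement that a primitive $\langle R(H),x_\tau\rangle$ of almost simple or diagonal type must equal $\Alt(H)$. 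Your version makes this explicit; it is not a gap in your argument but a sharpening of the lemma's intended content.
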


\begin{proof}
      Write $G=\langle R(H),x_\tau\rangle$. First suppose that $G$ is a primitive almost simple group. Since $R(H)$ is a regular subgroup of $G$ and $|R(H)|$ is a power of $2$, we derive from~\cite[Theorem~1.1]{LPS2010} that $H$ is metacyclic (in this case, the socle of $G$ is $\PSL_2(q)$ for some Merssenne prime $q$). This in conjunction with Lemma~\ref{lm:concentric}\eqref{enu:6concentric} implies $|H|\leq 4^2$, contradicting that $|H|=2^m$ with $m\geq 9$. 

      Next suppose that $G$ is a primitive group of diagonal type. Then the degree of $G$ is $|T|^n$ for some nonabelian simple group $T$ and positive integer $n$. However, the degree of $G$ is $|H|=2^m$, a contradiction.
\end{proof}

\subsection{Coordinate forms}
For convenience in calculations, we now introduce the coordinate form of elements of $H$. Let $\FF_2^m$ be an $m$-dimensional vector space over $\FF_2$ with standard basis $\bm{e}_1,\ldots,\bm{e}_m$. By Lemma~\ref{lm:concentric}\eqref{enu:4concentric}, there is a bijection $H\to \FF_2^m$ mapping $a_1^{\alpha_1}\dots a_{m}^{\alpha_{m}}$ to $(\alpha_1,\ldots, \alpha_{m})$. We refer to $(\alpha_1,\ldots, \alpha_{m})\in \FF_2^m$ as the \emph{coordinate form} of $a_1^{\alpha_1}\dots a_{m}^{\alpha_{m}}$. Then $\bm{e}_i$ is the coordinate form of $a_i$ for $i\in \{1,\ldots,m\}$. Throughout this paper, we use $\bm{\tau}=(\tau_1,\ldots,\tau_m)$ to represent the coordinate form of the undetermined $\tau\in a_1H_{2,d+1}$. Then we have
\[
      \tau_1=1\ \text{ and }\ \tau_{d+2}=\dots=\tau_m=0,
\]
while $\tau_2,\ldots,\tau_{d+1}$ are undetermined parameters in $\FF_2$.

Let $\bm{\alpha}$ and $\bm{\beta}$ be the coordinate forms of $a_1^{\alpha_1}\dots a_m^{\alpha_m}$ and $a_1^{\beta_1}\dots a_m^{\beta_m}$, respectively. Denote by $\bm{\alpha}\bm{\beta}$ the coordinate form of $(a_1^{\alpha_1}\dots a_m^{\alpha_m})(a_1^{\beta_1}\dots a_m^{\beta_m})$, and $[\bm{\alpha},\bm{\beta}]$ the coordinate form of $[a_1^{\alpha_1}\dots a_m^{\alpha_m},a_1^{\beta_1}\dots a_m^{\beta_m}]$. For $g\in \Sym(H)$, we define $\bm{\alpha}^g$ as the coordinate form of $(a_1^{\alpha_1}\dots a_m^{\alpha_m})^g$. Then, recalling the definition of $\phi$ in~\eqref{eq:perm phi}, we have
\[
      \bm{\alpha}^\phi=(\alpha_m,\alpha_1,\ldots,\alpha_{m-1}).
\]
For the reader's convenience, we summarize the correspondence between elements of $H$ and their coordinate forms in Table~\ref{tab:correspondence}. 
\begin{table}[h!]
\centering
\setlength{\arrayrulewidth}{1pt}
\renewcommand{\arraystretch}{1.1}
\begin{tabular}{c|c} 
      \hline
      Elements of $H$ & Coordinate form in $\FF_2^m$ \\ [0.5ex] 
      \hline
      $a_1^{\alpha_1}\dots a_{m}^{\alpha_{m}}$ &  $\bm{\alpha}$\\ 
      $a_i$&$\bm{e}_i$\\
      $\tau$ & $\bm{\tau}=(1,\tau_2,\ldots,\tau_{d+1},0,\ldots,0)$\\
      $(a_1^{\alpha_1}\dots a_m^{\alpha_m})(a_1^{\beta_1}\dots a_m^{\beta_m})$  & $\bm{\alpha}\bm{\beta}$\\
      $[a_1^{\alpha_1}\dots a_m^{\alpha_m},a_1^{\beta_1}\dots {a_m^{\beta_m}}]$ & $[\bm{\alpha},\bm{\beta}]$\\
      $(a_1^{\alpha_1}\dots a_m^{\alpha_m})^g$ &$\bm{\alpha}^g$\\
      $(a_1^{\alpha_1}\dots a_m^{\alpha_m})^\phi$& $\bm{\alpha}^\phi=(\alpha_m,\alpha_1,\ldots,\alpha_{m-1})$\\
      \hline
\end{tabular}
\caption{Coordinate form correspondence}
\label{tab:correspondence}
\end{table}

For each $\ell\in\{d'+1,\ldots,d\}$, $\bm{\alpha}=(\alpha_1,\ldots,\alpha_m)\in \FF_2^m$ and $\bm{\beta}=(\beta_1,\ldots,\beta_m)\in \FF_2^m$, define
\begin{equation}\label{eq:convolution}
      \lambda_\ell(\bm{\alpha},\bm{\beta})=\sum_{i=1}^{\min\{d',\ell-d'\}}\sum_{j=\max\{d+i,\ell+d'\}}^m\beta_i\alpha_j\varepsilon_{j-i,\ell-d'-i}.
\end{equation}
In particular, we will frequently use the following special case:
\begin{equation}\label{eq:e_m,alpha}
      \lambda_\ell(\bm{e}_m,\bm{\beta})=\sum_{i=1}^{\min\{d',\ell-d'\}}\beta_i\varepsilon_{m-i,\ell-d'-i}. 
\end{equation}

The following lemma lists three straightforward observations regarding the notation $\lambda_\ell(\bm{\alpha},\bm{\beta})$.
\begin{lemma}\label{lm:properties}
      Let $\bm{\alpha}=(\alpha_1,\ldots,\alpha_m)$, $\bm{\beta}=(\beta_1,\ldots,\beta_m)$ and $\bm{\gamma}=(\gamma_1,\ldots,\gamma_m)$ be elements of $\FF_2^m$. Then the following hold:
      \begin{enumerate}[\rm(a)]
            \item \label{enu:bilinear} (Bilinearity) for each $r,s\in \FF_2$, we have 
            \begin{align*}
                  \lambda_\ell(r\bm{\alpha}+s\bm{\gamma},\bm{\beta})&=r\lambda_\ell(\bm{\alpha},\bm{\beta})+s\lambda_\ell(\bm{\gamma},\bm{\beta}),\\
                  \lambda_\ell(\bm{\alpha},r\bm{\beta}+s\bm{\gamma})&=r\lambda_\ell(\bm{\alpha},\bm{\beta})+s\lambda_\ell(\bm{\alpha},\bm{\gamma});
            \end{align*}
            \item \label{enu:algebra} $\lambda_\ell(\bm{\alpha},\bm{\beta})$ is contained in the algebra $\FF_2[\alpha_{\max\{d+i,\ell+d'\}},\ldots,\alpha_m,\beta_1,\ldots,\beta_{\min\{d',\ell-d'\}}]$;
            \item \label{enu:domain} if $\alpha_i=\beta_i$ for each $i\in \{1,\ldots,\min\{d',\ell-d'\}\}$, then $\lambda_\ell(\bm{e}_m,\bm{\alpha})=\lambda_\ell(\bm{e}_m,\bm{\beta})$.
      \end{enumerate}
\end{lemma}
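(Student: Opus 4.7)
The plan is to verify all three statements directly from the defining sum~\eqref{eq:convolution} together with its specialization~\eqref{eq:e_m,alpha}; no new machinery is needed, and the verification essentially amounts to reading off which coordinates appear in which terms.

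For part~\eqref{enu:bilinear}, I would note that each summand in~\eqref{eq:convolution} has the form $\beta_i\alpha_j\varepsilon_{j-i,\ell-d'-i}$, which is the product of a single $\alpha$-coordinate, a single $\beta$-coordinate and a constant from $\FF_2$. Such a term is separately $\FF_2$-linear in $\bm{\alpha}$ and in $\bm{\beta}$, and since a finite sum of bilinear forms is bilinear, the two displayed identities follow at once. In particular, no re-indexing of the summation ranges is required, because those ranges depend only on $\ell$, $d$, $d'$ and not on $\bm{\alpha}$ or $\bm{\beta}$.

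For part~\eqref{enu:algebra}, I would inspect the index ranges in~\eqref{eq:convolution}. The outer index $i$ takes values in $\{1,\dots,\min\{d',\ell-d'\}\}$, so only the coordinates $\beta_1,\dots,\beta_{\min\{d',\ell-d'\}}$ can appear. The inner index $j$ starts at $\max\{d+i,\ell+d'\}$ and ends at $m$, so only coordinates $\alpha_j$ with $j$ in this range can appear. Hence every summand—and therefore the full expression $\lambda_\ell(\bm{\alpha},\bm{\beta})$—lies in the algebra generated over $\FF_2$ by these finitely many variables.

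For part~\eqref{enu:domain}, I would substitute $\bm{\alpha}=\bm{e}_m$ into~\eqref{eq:convolution} (noting that with the ranges $\ell\in\{d'+1,\dots,d\}$, the index $j=m$ is always within the summation range) and obtain the formula~\eqref{eq:e_m,alpha}; alternatively I can simply invoke~\eqref{eq:e_m,alpha} directly. That formula exhibits $\lambda_\ell(\bm{e}_m,\bm{\beta})$ as a linear combination of $\beta_1,\dots,\beta_{\min\{d',\ell-d'\}}$, so if $\alpha_i=\beta_i$ for each $i\in\{1,\dots,\min\{d',\ell-d'\}\}$ then substituting $\bm{\alpha}$ in place of $\bm{\beta}$ gives the same value, proving the claim.

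The main obstacle, if any, is purely bookkeeping: one should double-check that the derivation of the reduced formula~\eqref{eq:e_m,alpha} from~\eqref{eq:convolution} is valid across the entire range $\ell\in\{d'+1,\dots,d\}$, i.e.\ that the value $j=m$ indeed belongs to $\{\max\{d+i,\ell+d'\},\dots,m\}$ whenever $i\in\{1,\dots,\min\{d',\ell-d'\}\}$. This reduces to the two inequalities $d+i\leq m$ (equivalently $i\leq d'$) and $\ell+d'\leq m$ (equivalently $\ell\leq d$), both of which are guaranteed by the assumptions on $i$ and $\ell$. With that sanity check in place, each of the three statements is a one-line consequence of the definition.
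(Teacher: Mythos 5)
Your proof is correct, and it fills in exactly the routine verification that the paper omits (the lemma is stated there without proof as a set of ``straightforward observations'' read off from the defining sum~\eqref{eq:convolution} and its specialization~\eqref{eq:e_m,alpha}). Your sanity check that $j=m$ always lies in the inner summation range, which justifies~\eqref{eq:e_m,alpha} and hence part~(c), is the only point requiring any care, and you handle it correctly.
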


\section{Formulas and technical lemmas}\label{sec:calculation}

\subsection{Formulas}

Based on Lemma~\ref{lm:concentric}, the following lemma follows immediately from conditions~\eqref{enu:R2} and~\eqref{enu:R3}.

\begin{lemma}\label{lm:e_i,e_j}
      For $1\leq i\leq j\leq m$, we have
      \[
            [\bm{e}_i,\bm{e}_j]=[\bm{e}_j,\bm{e}_i]=
            \begin{cases}
                  \sum\limits_{\ell=d'+i}^{j-d'}\varepsilon_{j-i,\ell-d'-i}\bm{e}_\ell& \ \textup{ if }\ 1\leq i\leq d'\ \textup{ and }\ d+1\leq j \leq m\\
                  \bm{0}& \ \textup{ otherwise.}
            \end{cases} 
      \]
\end{lemma}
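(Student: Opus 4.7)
My plan is to unpack the defining commutator relations of $H$ and translate them directly into coordinate form; the statement is essentially a change of notation, but the case analysis deserves a careful walk-through.

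First, the symmetry $[\bm{e}_i,\bm{e}_j]=[\bm{e}_j,\bm{e}_i]$ is immediate from Lemma~\ref{lm:concentric}\eqref{enu:2concentric}, so I may assume $i\le j$. Note further that every commutator $[a_i,a_j]$ lies in the center $Z(H)=H_{d'+1,d}$, which is elementary abelian by Lemma~\ref{lm:concentric}\eqref{enu:1concentric}; hence a product $a_{d'+i}^{\varepsilon_1}\cdots a_{j-d'}^{\varepsilon_k}$ of central generators translates unambiguously to the $\FF_2$-linear combination $\varepsilon_1\bm{e}_{d'+i}+\cdots+\varepsilon_k\bm{e}_{j-d'}$ in $\FF_2^m$, with no ordering ambiguity to worry about.

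For the main case $1\le i\le d'$ and $d+1\le j\le m$, I would invoke (R3) directly: the relation
\[
    [a_i,a_j]=a_{d'+i}^{\varepsilon_{j-i,0}}\cdots a_{j-d'}^{\varepsilon_{j-i,j-i-2d'}}
\]
is re-indexed via $\ell=d'+i+k$, so that $\ell$ ranges over $\{d'+i,\ldots,j-d'\}$ and the second index of $\varepsilon_{j-i,k}$ becomes $\ell-d'-i$; this gives precisely $\sum_{\ell=d'+i}^{j-d'}\varepsilon_{j-i,\ell-d'-i}\bm{e}_\ell$ after passing to coordinate form.

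For the remaining case, I need to show $[\bm{e}_i,\bm{e}_j]=\bm{0}$. Under the standing assumption $i\le j$, if we are not in the main case then either $j\le d$ or $i\ge d'+1$. In the first sub-case $j-i\le d-1<d$, and in the second $j-i\le m-(d'+1)=d-1<d$; either way (R2) yields $[a_i,a_j]=1$, as required. I do not foresee any substantive obstacle here — the only minor point warranting attention is verifying that the complement of the main-case condition collapses to $|i-j|<d$, so that (R2) genuinely covers every remaining pair $(i,j)$.
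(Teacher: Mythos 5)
Your proposal follows the same route as the paper, which offers no argument beyond asserting that the lemma ``follows immediately'' from \eqref{enu:R2} and \eqref{enu:R3}; your symmetry reduction via Lemma~\ref{lm:concentric}, the translation of products of central generators into $\FF_2$-linear combinations, and the check that the complement of the first branch satisfies $|i-j|<d$ are all correct and are exactly the details the paper leaves implicit.

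There is, however, one sub-case inside the first branch that your appeal to \eqref{enu:R3} does not literally cover. Relation \eqref{enu:R3} is stated for $j\in\{d+i,\ldots,m\}$, whereas the first branch of the lemma allows all $j\in\{d+1,\ldots,m\}$; for $i\geq 2$ the pairs with $d+1\leq j\leq d+i-1$ lie in the first branch but satisfy $j-i<d$, so the commutator is trivial by \eqref{enu:R2} and is not given by \eqref{enu:R3}. For the displayed formula to remain valid there one must check that $\sum_{\ell=d'+i}^{j-d'}\varepsilon_{j-i,\ell-d'-i}\bm{e}_\ell=\bm{0}$. That sum is empty precisely when $j-i<2d'$, but when $3d>2m$ (so $d>2d'$) there exist pairs with $2d'\leq j-i\leq d-1$ for which the sum is formally nonempty and involves symbols $\varepsilon_{j-i,\cdot}$ whose first index is below $d$, and Definition~\ref{def:rel} never introduces such parameters. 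So either those parameters must be declared to be $0$, or the first branch should be read with $j\geq d+i$ (which is how the lemma is actually used later, e.g.\ in the proof of Lemma~\ref{lm:multiplication}, where the inner product runs over $j\geq d+i$). This is as much a wrinkle in the statement as in your argument, but your write-up should dispose of these boundary pairs explicitly rather than claiming \eqref{enu:R3} applies ``directly'' to the whole first branch.
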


The next two lemmas establish formulas for $\bm{\alpha}\bm{\beta}$ and $[\bm{\alpha},\bm{\beta}]$ involving $\lambda_\ell(\bm{\alpha},\bm{\beta})$.

\begin{lemma}\label{lm:multiplication}
      For $\bm{\alpha},\bm{\beta}\in \FF_2^m$, we have
      \[
            \bm{\alpha}\bm{\beta}=\bm{\alpha}+\bm{\beta}+\sum_{\ell=d'+1}^d\lambda_\ell(\bm{\alpha},\bm{\beta})\bm{e}_\ell.
      \]
\end{lemma}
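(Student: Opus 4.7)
The plan is to expand $(a_1^{\alpha_1}\cdots a_m^{\alpha_m})(a_1^{\beta_1}\cdots a_m^{\beta_m})$ into normal form via commutator collection and then identify the coefficient of each $\bm{e}_\ell$ with $\lambda_\ell(\bm{\alpha},\bm{\beta})$.

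The key structural input is Lemma~\ref{lm:concentric}(b), which gives $H'\leq Z(H)$; hence every commutator is central and the identity $gh=hg\,[g,h]$ allows us to freely reorder generators at the cost of central factors. I would move each $a_i^{\beta_i}$ leftward past every $a_j^{\alpha_j}$ with $j>i$, accumulating central brackets $[a_j^{\alpha_j},a_i^{\beta_i}]$ that may be collected at the end. After combining matching powers via $a_i^2=1$, the product takes the shape
\[
a_1^{\alpha_1+\beta_1}\cdots a_m^{\alpha_m+\beta_m}\,\prod_{1\leq i<j\leq m}[a_j^{\alpha_j},a_i^{\beta_i}].
\]
Bilinearity of the commutator (Lemma~\ref{lm:concentric}(d)) together with $[a_j,a_i]=[a_i,a_j]$ (Lemma~\ref{lm:concentric}(c)) collapses each bracket to $[a_i,a_j]^{\alpha_j\beta_i}$. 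Passing to coordinate form yields
\[
\bm{\alpha}\bm{\beta}=\bm{\alpha}+\bm{\beta}+\sum_{1\leq i<j\leq m}\alpha_j\beta_i\,[\bm{e}_i,\bm{e}_j].
\]

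The next step is to plug in the explicit formula from Lemma~\ref{lm:e_i,e_j}, which replaces $[\bm{e}_i,\bm{e}_j]$ with $\sum_{\ell=d'+i}^{j-d'}\varepsilon_{j-i,\ell-d'-i}\bm{e}_\ell$ in the nontrivial range $1\leq i\leq d'$, $d+i\leq j\leq m$, and with $\bm 0$ otherwise. Swapping summation so that $\ell$ becomes the outer index, the constraints
\[
1\leq i\leq d',\qquad d+i\leq j\leq m,\qquad d'+i\leq\ell\leq j-d'
\]
transpose to $\ell\in\{d'+1,\ldots,d\}$, $1\leq i\leq\min\{d',\ell-d'\}$, and $\max\{d+i,\ell+d'\}\leq j\leq m$. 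These are exactly the ranges in the definition~\eqref{eq:convolution} of $\lambda_\ell(\bm{\alpha},\bm{\beta})$, so matching coefficients concludes the proof.

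I do not expect any serious obstacle: since $H$ has nilpotency class at most $2$, the commutator collection is entirely standard, and Lemma~\ref{lm:e_i,e_j} already packages the necessary structural data. The only delicate piece is the bookkeeping when exchanging the order of summation and verifying that the three index inequalities realign with those in~\eqref{eq:convolution}, but this is mechanical.
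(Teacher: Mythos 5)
Your proposal is correct and follows essentially the same route as the paper's proof: collect commutators using $H'\leq Z(H)$ to reach $a_1^{\alpha_1+\beta_1}\cdots a_m^{\alpha_m+\beta_m}\prod[a_i,a_j]^{\beta_i\alpha_j}$, substitute the formula of Lemma~\ref{lm:e_i,e_j}, and interchange the order of summation to recover $\lambda_\ell(\bm{\alpha},\bm{\beta})$. The only cosmetic difference is that you keep the full product over all $i<j$ and prune the trivial brackets afterwards, while the paper restricts to $1\leq i\leq d'$, $d+i\leq j\leq m$ from the outset; your index transposition matches the ranges in~\eqref{eq:convolution} exactly.
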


\begin{proof}
      Let $\bm{\alpha}=(\alpha_1,\ldots,\alpha_m)$ and $\bm{\beta}=(\beta_1,\ldots,\beta_m)$. It is a straightforward calculation that 
      \begin{align*}
            (a_1^{\alpha_1}\dots a_m^{\alpha_m})(a_1^{\beta_1}\dots a_m^{\beta_m})
            &=a_1^{\alpha_1+\beta_1}\dots a_m^{\alpha_m+\beta_m}\prod_{i=1}^{d'}\prod_{j=d+i}^m[a_i^{\beta_i},a_j^{\alpha_j}]\\
            &=a_1^{\alpha_1+\beta_1}\dots a_m^{\alpha_m+\beta_m}\prod_{i=1}^{d'}\prod_{j=d+i}^m[a_i,a_j]^{\beta_i\alpha_j}.
      \end{align*}
      This together with Lemma~\ref{lm:e_i,e_j} shows that the coordinate form of $(a_1^{\alpha_1}\dots a_m^{\alpha_m})(a_1^{\beta_1}\dots a_m^{\beta_m})$ is 
      \begin{align*}
            \bm{\alpha}\bm{\beta}
            &=\bm{\alpha}+\bm{\beta}+\sum_{i=1}^{d'}\sum_{j=d+i}^m\beta_j\alpha_i[\bm{e}_i,\bm{e}_j]\\
            &=\bm{\alpha}+\bm{\beta}+\sum_{i=1}^{d'}\sum_{j=d+i}^m\sum_{\ell=d'+i}^{j-d'}\beta_j\alpha_i\varepsilon_{j-i,\ell-d'-i}\bm{e}_\ell\\
            &=\bm{\alpha}+\bm{\beta}+\sum_{\ell=d'+1}^d\sum_{i=1}^{\min\{d',\ell-d'\}}\sum_{j=\max\{d+i,\ell+d'\}}^m\beta_i\alpha_j\varepsilon_{j-i,\ell-d'-i}\bm{e}_\ell\\
            &=\bm{\alpha}+\bm{\beta}+\sum_{\ell=d'+1}^d\lambda_\ell(\bm{\alpha},\bm{\beta})\bm{e}_\ell.\qedhere
      \end{align*}
\end{proof}

\begin{lemma}\label{lm:commutator}
      For $\bm{\alpha},\bm{\beta}\in \FF_2^m$, we have
      \[
            [\bm{\alpha},\bm{\beta}]=[\bm{\beta},\bm{\alpha}]=\sum_{\ell=d'+1}^d(\lambda_\ell(\bm{\alpha},\bm{\beta})+\lambda_\ell(\bm{\beta},\bm{\alpha}))\bm{e}_\ell.
      \]
      In particular, the following holds for $\bm{\alpha}\in \FF_2^m$:
      \[
            [\bm{\alpha},\bm{e}_m]=[\bm{e}_m,\bm{\alpha}]=\sum_{\ell=d'+1}^d\lambda_\ell(\bm{e}_m,\bm{\alpha})\bm{e}_\ell.
      \]
\end{lemma}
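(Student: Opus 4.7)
The plan is to derive the commutator formula directly from the multiplication formula (Lemma~\ref{lm:multiplication}) by exploiting that every commutator in $H$ is central.

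First I would observe that since $H'\leq Z(H)=H_{d'+1,d}$ by Lemma~\ref{lm:concentric}\eqref{enu:1concentric}, the coordinate form of any central element is supported on positions $d'+1,\ldots,d$. In particular, if $\bm{c}\in\FF_2^m$ is the coordinate form of some $c\in Z(H)$, then $c_i=0$ for all $i\leq d'$, so the defining sum~\eqref{eq:convolution} collapses and $\lambda_\ell(\bm{\alpha},\bm{c})=0$ for every $\ell\in\{d'+1,\ldots,d\}$ and every $\bm{\alpha}\in\FF_2^m$. Plugging this into Lemma~\ref{lm:multiplication} I get $\bm{\alpha}\bm{c}=\bm{\alpha}+\bm{c}$; that is, right-multiplication by (the coordinate form of) a central element is simply addition in $\FF_2^m$.

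Next I would invoke the universal group-theoretic identity $ab=ba\cdot[a,b]$. Since $[a,b]\in H'$ is central, the previous observation translates this to
\[
      \bm{\alpha}\bm{\beta}=\bm{\beta}\bm{\alpha}+[\bm{\alpha},\bm{\beta}]
\]
in $\FF_2^m$, so in characteristic $2$ we have $[\bm{\alpha},\bm{\beta}]=\bm{\alpha}\bm{\beta}+\bm{\beta}\bm{\alpha}$. Applying Lemma~\ref{lm:multiplication} to each of $\bm{\alpha}\bm{\beta}$ and $\bm{\beta}\bm{\alpha}$ and summing, the identical linear parts $\bm{\alpha}+\bm{\beta}$ cancel, leaving exactly $\sum_{\ell=d'+1}^d(\lambda_\ell(\bm{\alpha},\bm{\beta})+\lambda_\ell(\bm{\beta},\bm{\alpha}))\bm{e}_\ell$. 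The symmetry $[\bm{\alpha},\bm{\beta}]=[\bm{\beta},\bm{\alpha}]$ is then manifest from this expression by swapping the two arguments.

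For the ``in particular'' part, I would specialize $\bm{\beta}=\bm{e}_m$. Since $(\bm{e}_m)_i=0$ for every $i\leq d'<m$, the bilinear expression~\eqref{eq:convolution} forces $\lambda_\ell(\bm{\alpha},\bm{e}_m)=0$, so only the term $\lambda_\ell(\bm{e}_m,\bm{\alpha})$ contributes and the stated identity drops out. No step here is genuinely obstructive; the only point worth a careful line of justification is the reduction that right-multiplication by a central element coincides with XOR of coordinate vectors, and this is precisely the short computation with $\lambda_\ell$ carried out in the first paragraph.
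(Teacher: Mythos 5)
Your argument is correct, and it reaches the formula by a genuinely different route from the paper. The paper's proof expands $[a_1^{\alpha_1}\dots a_m^{\alpha_m},a_1^{\beta_1}\dots a_m^{\beta_m}]$ into the double product $\prod_i\prod_j[a_i^{\alpha_i},a_j^{\beta_j}]$ using Lemma~\ref{lm:concentric}\eqref{enu:3concentric}, splits it into the two nonvanishing index ranges, and then repeats the same index-reshuffling computation as in the proof of Lemma~\ref{lm:multiplication}; for the ``in particular'' part it likewise observes via Lemma~\ref{lm:properties}\eqref{enu:algebra} that $\lambda_\ell(\bm{\alpha},\bm{e}_m)=0$. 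You instead deduce the formula formally from Lemma~\ref{lm:multiplication}: since $[a,b]\in H'\leq Z(H)=H_{d'+1,d}$, its coordinate form vanishes in positions $1,\ldots,d'$, so by~\eqref{eq:convolution} right-multiplication by it is coordinatewise addition, and the identity $ab=ba\,[a,b]$ then gives $[\bm{\alpha},\bm{\beta}]=\bm{\alpha}\bm{\beta}+\bm{\beta}\bm{\alpha}$, from which the claimed expression and its symmetry in $\bm{\alpha},\bm{\beta}$ drop out by cancellation of the linear parts. Your route buys a shorter proof that avoids redoing the double-sum manipulation and makes the symmetry transparent; the paper's route is self-contained at the level of elementary commutators and does not need the observation that central elements act by XOR. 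Each step you use (centrality of $H'$, the support of central elements, the vanishing of $\lambda_\ell(\cdot,\bm{c})$ and of $\lambda_\ell(\bm{\alpha},\bm{e}_m)$) is justified by results already available in Section~\ref{sec:preliminary}, so there is no gap.
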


\begin{proof}
      Let $\bm{\alpha}=(\alpha_1,\ldots,\alpha_m)$ and $\bm{\beta}=(\beta_1,\ldots,\beta_m)$. Then
      \begin{align*}
            [a_1^{\alpha_1}\dots a_m^{\alpha_m},a_1^{\beta_1}\dots a_m^{\beta_m}]
            =\prod_{i=1}^m\prod_{j=1}^m[a_i^{\alpha_i},a_j^{\alpha_j}]
            &=\Big(\prod_{i=1}^{d'}\prod_{j=d+i}^m[a_i,a_j]^{\alpha_i\beta_j}\Big)\Big(\prod_{j=1}^{d'}\prod_{i=d+j}^m[a_i,a_j]^{\alpha_i\beta_j}\Big)\\
            &=\Big(\prod_{i=1}^{d'}\prod_{j=d+i}^m[a_i,a_j]^{\alpha_i\beta_j}\Big)\Big(\prod_{i=1}^{d'}\prod_{j=d+i}^m[a_j,a_i]^{\alpha_j\beta_i}\Big).
      \end{align*}
      Now the first conclusion follows from a similar calculation to the proof of Lemma~\ref{lm:multiplication}. To see the second conclusion, notice from Lemma~\ref{lm:properties}\eqref{enu:algebra} that $\lambda_\ell(\bm{\alpha},\bm{e}_m)=0$ for each $\ell\in\{d'+1,\ldots,d\}$.
\end{proof}

The rest of this subsection is devoted to the calculation of $\bm{\alpha}^{x_\tau}$, $\bm{\alpha}^{x_\tau^{-1}}$, $\bm{\alpha}^{y_\tau}$, $\bm{\alpha}^{y_\tau^{x_\tau^t}}$ for all $t\in \{-d'+1,\ldots,d-d'+1\}$.

\begin{lemma}\label{lm:x coordinate form}
      Let $\bm{\alpha}=(\alpha_1,\ldots,\alpha_m)\in\FF_2^m$. Then 
      \[
            \bm{\alpha}^{x_\tau}=\bm{\alpha}^\phi+\alpha_m\Big(\bm{e}_1+\bm{\tau}+\sum_{\ell=d'+2}^{d+1}\lambda_{\ell-1}(\bm{e}_m,\bm{\alpha})\bm{e}_{\ell}\Big).
      \]
\end{lemma}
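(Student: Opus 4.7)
The plan is a direct case analysis on whether $\alpha_m=0$ or $\alpha_m=1$, matching the piecewise definition of $x_\tau$ on $H_{1,m-1}$ and $a_mH_{1,m-1}$. If $\alpha_m=0$, the element $a_1^{\alpha_1}\cdots a_{m-1}^{\alpha_{m-1}}$ already lies in $H_{1,m-1}$, so applying $x_\tau$ just applies $\varphi$, giving coordinate form $\bm{\alpha}^\phi$; on the right-hand side, the factor $\alpha_m$ annihilates the bracket. This case is immediate.

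The substantive case is $\alpha_m=1$. Set $g=a_1^{\alpha_1}\cdots a_{m-1}^{\alpha_{m-1}}a_m$ and rewrite $g=a_mh$ with $h\in H_{1,m-1}$, by commuting the leading $a_m$ past the monomial through $a_ma_i=a_ia_m[a_m,a_i]$. Since commutators are central (Lemma~\ref{lm:concentric}\eqref{enu:1concentric}), $[a_m,a_i]=1$ for $i>d'$, and $a_m^2=1$, this produces
\[
      h=a_1^{\alpha_1}\cdots a_{m-1}^{\alpha_{m-1}}\prod_{i=1}^{d'}[a_m,a_i]^{\alpha_i}.
\]
Applying $\varphi$ shifts every index up by one. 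Expanding $[a_m,a_i]$ via (R3), substituting $\ell'=\ell+1$ in the inner sum, and noting that the resulting central factors commute freely with the shifted monomial $a_2^{\alpha_1}\cdots a_m^{\alpha_{m-1}}$, one recognises the coefficient of $\bm{e}_\ell$ as precisely $\lambda_{\ell-1}(\bm{e}_m,\bm{\alpha})$ from~\eqref{eq:e_m,alpha}. Hence the coordinate form of $h^\varphi$ is $\bm{\beta}+\bm{\zeta}$, where $\bm{\beta}=(0,\alpha_1,\ldots,\alpha_{m-1})$ (which equals $\bm{\alpha}^\phi+\bm{e}_1$ since $\alpha_m=1$) and $\bm{\zeta}=\sum_{\ell=d'+2}^{d+1}\lambda_{\ell-1}(\bm{e}_m,\bm{\alpha})\bm{e}_\ell$ is exactly the error term in the stated formula.

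Finally $g^{x_\tau}=\tau\cdot h^\varphi$; by Lemma~\ref{lm:multiplication} its coordinate form is $\bm{\tau}+\bm{\beta}+\bm{\zeta}+\sum_{\ell=d'+1}^{d}\lambda_\ell(\bm{\tau},\bm{\beta}+\bm{\zeta})\bm{e}_\ell$, and the correction sum vanishes. Indeed, by bilinearity (Lemma~\ref{lm:properties}\eqref{enu:bilinear}) one has $\lambda_\ell(\bm{\tau},\bm{\zeta})=0$ because $\bm{\zeta}$ is supported on indices $\geq d'+2$ whereas only the first $\min\{d',\ell-d'\}\leq d'$ coordinates of the second argument are used; and $\lambda_\ell(\bm{\tau},\bm{\beta})=0$ because $\tau_j=0$ for $j\geq d+2$ forces $i=1$, at which point $\beta_1=0$ kills the term. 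Thus $\bm{\alpha}^{x_\tau}=\bm{\tau}+\bm{\beta}+\bm{\zeta}=\bm{\alpha}^\phi+\bm{e}_1+\bm{\tau}+\bm{\zeta}$, matching the claim. The only real obstacle is careful bookkeeping of the index shifts: the range $\ell\mapsto\ell+1$ in the (R3) expansion must be tracked so that the inner sum reassembles into $\lambda_{\ell-1}(\bm{e}_m,\bm{\alpha})$, and one must verify that the Lemma~\ref{lm:multiplication} correction really is forced to zero by the support conditions on $\bm{\tau}$, $\bm{\beta}$, and $\bm{\zeta}$.
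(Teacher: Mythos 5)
Your proposal is correct and follows essentially the same route as the paper: both hinge on the rewriting $w a_m = a_m w[w,a_m]$ (so that $x_\tau$ acts as $\tau\cdot(\,\cdot\,)^\varphi$), the identification of the commutator's coordinate form with $\sum_{\ell}\lambda_{\ell-1}(\bm{e}_m,\bm{\alpha})\bm{e}_\ell$ via \eqref{enu:R3} and \eqref{eq:e_m,alpha}, and the support argument showing the $\lambda_\ell$ corrections in Lemma~\ref{lm:multiplication} vanish. The only (immaterial) difference is that you absorb $[w,a_m]$ into $h$ before applying $\varphi$, whereas the paper keeps it as a separate central factor and multiplies it in at the end.
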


\begin{proof}
      Let $h=a_1^{\alpha_1}\dots a_{m-1}^{\alpha_{m-1}}$ and $\bm{h}=(\alpha_1,\ldots,\alpha_{m-1},0)$. If $\alpha_m=0$, then 
      \[
            (a_1^{\alpha_1}\dots a_{m-1}^{\alpha_{m-1}}a_m^{\alpha_m})^{x_\tau}=h^{x_\tau}=h^\varphi=h^\phi,
      \]
      and thus $\bm{\alpha}^{x_\tau}=\bm{h}^\phi=\bm{\alpha}^\phi$, as desired. Now assume $\alpha_m=1$. Then $\bm{\alpha}^{x_\tau}$ is the coordinate form of
      \begin{equation}\label{eq:eqx}
            (a_1^{\alpha_1}\dots a_{m-1}^{\alpha_{m-1}}a_m)^{x_\tau}=(ha_m)^{x_\tau}=(a_mh[h,a_m])^{x_\tau}=\tau h^\varphi[h,a_m]^\varphi.
      \end{equation}
      In view of $\bm{\tau}=(1,\tau_2,\ldots,\tau_{d+1},0,\ldots,0)$ and $\bm{h}^\phi=(0,\alpha_1,\ldots,\alpha_{m-1})$, we calculate by~\eqref{eq:convolution} that $\lambda_\ell(\bm{\tau},\bm{h}^\phi)=0$ for each $\ell\in\{d'+1,\ldots,d\}$.
      Hence, by Lemma~\ref{lm:multiplication}, the coordinate form of $\tau h^\varphi$ is 
      \[
            \bm{\tau}\bm{h}^\phi=\bm{\tau}+\bm{h}^\phi+\sum_{\ell=d'+1}^d\lambda_\ell(\bm{\tau},\bm{h}^\phi)\bm{e}_\ell=\bm{\tau}+\bm{h}^\phi=\bm{\tau}+\bm{\alpha}^\phi+\bm{e}_1.
      \]
      Moreover, it follows from Lemmas~\ref{lm:commutator} and~\ref{lm:properties}\eqref{enu:domain} that the coordinate form of $[h,a_m]^\varphi$ is 
      \[
            [\bm{h},\bm{e}_m]^\phi
            =\sum_{\ell=d'+1}^d\lambda_\ell(\bm{e}_m,\bm{h})\bm{e}_{\ell+1}
            =\sum_{\ell=d'+1}^d\lambda_\ell(\bm{e}_m,\bm{\alpha})\bm{e}_{\ell+1}
            =\sum_{\ell=d'+2}^{d+1}\lambda_{\ell-1}(\bm{e}_m,\bm{\alpha})\bm{e}_{\ell}.
      \]
      Combining these formulas of $\bm{\tau}\bm{h}^\phi$ and $[\bm{h},\bm{e}_m]^\phi$ with~\eqref{eq:eqx} and noting that the first $d'$ coordinates of $[\bm{h},\bm{e}_m]^\phi$ are all zero, we obtain by Lemmas~\ref{lm:multiplication} and~\ref{lm:properties}\eqref{enu:algebra} that
      \[
            \bm{\alpha}^{x_\tau}
            =(\bm{\tau}\bm{h}^\phi)[\bm{h},\bm{e}_m]^\phi
            =\bm{\tau}\bm{h}^\phi+[\bm{h},\bm{e}_m]^\phi
            =\bm{\tau}+\bm{\alpha}^\phi+\bm{e}_1+\sum_{\ell=d'+2}^{d+1}\lambda_{\ell-1}(\bm{e}_m,\bm{\alpha})\bm{e}_{\ell}.
      \]
      This completes the proof.
\end{proof}

\begin{lemma}\label{lm:x^-1 coordinate form}
      Let $\bm{\alpha}=(\alpha_1,\ldots,\alpha_m)\in \FF_2^m$. Then 
      \[
            \bm{\alpha}^{x_\tau^{-1}}=\bm{\alpha}^{\phi^{-1}}+\alpha_1\Big(\bm{\tau}^{\phi^{-1}}+\bm{e}_m+\sum_{\ell=d'+1}^{d}\lambda_\ell(\bm{e}_m,\bm{\alpha}^{\phi^{-1}}+\bm{\tau}^{\phi^{-1}})\bm{e}_\ell\Big).
      \]
\end{lemma}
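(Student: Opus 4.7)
The plan is to verify the stated identity by applying $x_\tau$ to the proposed expression for $\bm{\alpha}^{x_\tau^{-1}}$ and checking, via Lemma~\ref{lm:x coordinate form}, that we recover $\bm{\alpha}$. Writing $\bm{\beta}$ for the right-hand side of the formula, the task reduces to proving $\bm{\beta}^{x_\tau}=\bm{\alpha}$.

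The case $\alpha_1=0$ is immediate: the whole parenthesised correction vanishes, so $\bm{\beta}=\bm{\alpha}^{\phi^{-1}}$ has last coordinate $\alpha_1=0$, and Lemma~\ref{lm:x coordinate form} collapses to $\bm{\beta}^{x_\tau}=\bm{\beta}^\phi=\bm{\alpha}$. The substantive case is $\alpha_1=1$. Here I would first expand $\bm{\beta}$ coordinate by coordinate, using that $\bm{\alpha}^{\phi^{-1}}$ and $\bm{\tau}^{\phi^{-1}}$ shift indices by one and that the internal $\lambda$-correction inside $\bm{\beta}$ lives only in coordinates $d'+1,\ldots,d$. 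Two points emerge from this computation: the last coordinate satisfies $\beta_m=1$ (the contributions from $\bm{\alpha}^{\phi^{-1}}$, $\bm{\tau}^{\phi^{-1}}$ and $\bm{e}_m$ give $1+1+1=1$), and for $j\in\{1,\ldots,d'\}$ we have $\beta_j=(\bm{\alpha}^{\phi^{-1}}+\bm{\tau}^{\phi^{-1}})_j=\alpha_{j+1}+\tau_{j+1}$, since both $\bm{e}_m$ and the internal $\lambda$-correction have empty support in the first $d'$ positions.

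I would then apply Lemma~\ref{lm:x coordinate form} to $\bm{\beta}$ and exploit $\beta_m=1$ to absorb the large correction factor. The $\bm{\tau}^{\phi^{-1}}$ embedded in $\bm{\beta}$ becomes $\bm{\tau}$ after passing through $\phi$, cancelling the $\bm{\tau}$ newly produced by Lemma~\ref{lm:x coordinate form}; similarly $\bm{e}_m$ shifts to $\bm{e}_1$ and cancels the $\bm{e}_1$ introduced. Coordinates $d+2,\ldots,m$ cause no difficulty because $\tau_i=0$ there, and coordinate $1$ works out directly from $\beta_m=1$. The only delicate step is the cancellation in the middle block $\{d'+2,\ldots,d+1\}$, where the shifted internal $\lambda$-correction of $\bm{\beta}$ must cancel the new $\lambda$-correction produced by Lemma~\ref{lm:x coordinate form} applied to $\bm{\beta}$. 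This reduces to the identity $\lambda_\ell(\bm{e}_m,\bm{\beta})=\lambda_\ell(\bm{e}_m,\bm{\alpha}^{\phi^{-1}}+\bm{\tau}^{\phi^{-1}})$ for each $\ell\in\{d'+1,\ldots,d\}$, which is furnished by Lemma~\ref{lm:properties}\eqref{enu:domain} once we know the first $d'$ coordinates of the two arguments agree, as already observed.

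The main obstacle is precisely this last cancellation, and its success rests on a single observation: the internal $\lambda$-correction inside $\bm{\beta}$ is supported on positions $d'+1,\ldots,d$, while Lemma~\ref{lm:properties}\eqref{enu:domain} only reads the first $\min\{d',\ell-d'\}\leq d'$ positions of the argument of $\lambda_\ell(\bm{e}_m,\cdot)$. Once this invisibility is noticed, everything else is a routine bookkeeping exercise across the index ranges $\{1,\ldots,d'\}$, $\{d'+1,\ldots,d\}$, $\{d+1,\ldots,m-1\}$, and $\{m\}$, which mirrors the proof of Lemma~\ref{lm:x coordinate form}.
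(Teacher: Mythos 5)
Your argument is correct, but it proceeds in the opposite direction from the paper's. The paper derives the formula from scratch at the group level: it writes $a_1^{\alpha_1}\dots a_m^{\alpha_m}=\tau^{\alpha_1}h$ with $h\in H_{2,m}$, uses the defining action $(\tau h)^{x_\tau^{-1}}=a_mh^{\varphi^{-1}}=h^{\varphi^{-1}}a_m[a_m,h^{\varphi^{-1}}]$, and then converts to coordinates via Lemmas~\ref{lm:multiplication} and~\ref{lm:commutator}, finally substituting $\bm{h}^{\phi^{-1}}=\bm{\alpha}^{\phi^{-1}}+\bm{\tau}^{\phi^{-1}}$. You instead take the stated right-hand side $\bm{\beta}$ as given and verify $\bm{\beta}^{x_\tau}=\bm{\alpha}$ using the already-established Lemma~\ref{lm:x coordinate form}; since $x_\tau$ is a bijection on coordinate forms this suffices. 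Your key observations all check out: $\beta_m=\alpha_1+\tau_1+1=1$ when $\alpha_1=1$, the first $d'$ coordinates of $\bm{\beta}$ agree with those of $\bm{\alpha}^{\phi^{-1}}+\bm{\tau}^{\phi^{-1}}$ because the $\bm{e}_m$ term and the internal $\lambda$-correction (supported on $\{d'+1,\ldots,d\}$) are invisible there, and hence Lemma~\ref{lm:properties}\eqref{enu:domain} gives $\lambda_{\ell}(\bm{e}_m,\bm{\beta})=\lambda_{\ell}(\bm{e}_m,\bm{\alpha}^{\phi^{-1}}+\bm{\tau}^{\phi^{-1}})$, which is exactly the cancellation needed in the middle block; the linearity of $\phi$ on coordinates handles the rest. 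The trade-off is that your route is a verification rather than a derivation --- it presupposes the formula and leans entirely on the forward lemma, whereas the paper's computation shows where the formula comes from and does not depend on Lemma~\ref{lm:x coordinate form}; on the other hand, your route avoids re-invoking the multiplication and commutator formulas and isolates the one genuinely delicate point cleanly.
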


\begin{proof}
      Write $a_1^{\alpha_1}\dots a_{m}^{\alpha_{m}}=\tau^{\alpha_1} h$ for some $h\in H_{2,m}$, and let $\bm{h}$ be the coordinate form of $h$. If $\alpha_1=0$, then $(a_1^{\alpha_1}\dots a_{m}^{\alpha_{m}})^{x_\tau^{-1}}=h^{x_\tau^{-1}}=h^{\varphi^{-1}}=h^\phi$, and so $\bm{\alpha}^{x_\tau^{-1}}=\bm{h}^{x_\tau^{-1}}=\bm{h}^{\phi^{-1}}=\bm{\alpha}^{\phi^{-1}}$, as desired. Now assume $\alpha_1=1$. Then $\bm{\alpha}^{x_\tau^{-1}}$ is the coordinate form of
      \[
            (a_1^{\alpha_1}\dots a_{m}^{\alpha_{m}})^{x_\tau^{-1}}=(\tau h)^{x_\tau^{-1}}=a_m h^{\varphi^{-1}}=h^{\varphi^{-1}}a_m[a_m,h^{\varphi^{-1}}].
      \]
      This combined with Lemmas~\ref{lm:multiplication},~\ref{lm:properties}\eqref{enu:algebra}, and~\ref{lm:commutator} gives that
      \begin{align*}
            \bm{\alpha}^{x_\tau^{-1}}
            &=(\bm{h}^{\phi^{-1}}\bm{e}_m)[\bm{e}_m,\bm{h}^{\phi^{-1}}]\\
            &=(\bm{h}^{\phi^{-1}}+\bm{e}_m)\Big(\sum_{\ell=d'+1}^d \lambda_\ell(\bm{e}_m,\bm{h}^{\phi^{-1}})\bm{e}_\ell\Big)
            =\bm{h}^{\phi^{-1}}+\bm{e}_m+\sum_{\ell=d'+1}^d \lambda_\ell(\bm{e}_m,\bm{h}^{\phi^{-1}})\bm{e}_\ell.
      \end{align*}
      Noting that $a_1^{\alpha_1}\dots a_{m}^{\alpha_{m}}=\tau h$ and $h\in H_{2,m}$, we derive from Lemma~\ref{lm:multiplication} and~\eqref{eq:convolution} that $\bm{\alpha}=\bm{\tau}\bm{h}=\bm{\tau}+\bm{h}$, and so $\bm{h}^{\phi^{-1}}=\bm{\alpha}^{\phi^{-1}}+\bm{\tau}^{\phi^{-1}}$. Therefore,
      \[
            \bm{\alpha}^{x_\tau^{-1}}=\bm{\alpha}^{\phi^{-1}}+\bm{\tau}^{\phi^{-1}}+\bm{e}_m+\sum_{\ell=d'+1}^d \lambda_\ell(\bm{e}_m,\bm{\alpha}^{\phi^{-1}}+\bm{\tau}^{\phi^{-1}})\bm{e}_\ell.
      \]
      This completes the proof.
\end{proof}

\begin{lemma}\label{lm:C2}
      Suppose that $\eqref{enu:C2}$ holds. Then $\lambda_d(\bm{\alpha},\bm{\beta})=0$ for each $\bm{\alpha},\bm{\beta}\in\FF_2^m$, and 
      \[
            [a_m,h]\in H_{d'+1,d-1}\ \text{ and }\ [a_m,h]^\varphi\in H_{d'+2,d}\leq Z(H)
      \]
      for each $h\in H$.
\end{lemma}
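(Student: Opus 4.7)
The plan is to expand $\lambda_d(\bm{\alpha},\bm{\beta})$ directly from the defining formula~\eqref{eq:convolution}, observe that condition~\eqref{enu:C2} exactly kills every surviving term, and then feed this back into Lemma~\ref{lm:commutator} to locate $[a_m,h]$.

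First I would specialize~\eqref{eq:convolution} to $\ell=d$. Since $d-d'\geq d'$ by Lemma~\ref{lm:concentric}\eqref{enu:1concentric}, we have $\min\{d',\ell-d'\}=\min\{d',d-d'\}=d'$, and for each $i\in\{1,\ldots,d'\}$,
\[
      \max\{d+i,\ell+d'\}=\max\{d+i,d+d'\}=d+d'=m,
\]
so the inner sum collapses to the single value $j=m$. Therefore
\[
      \lambda_d(\bm{\alpha},\bm{\beta})=\sum_{i=1}^{d'}\beta_i\alpha_m\,\varepsilon_{m-i,\,d-d'-i}=\sum_{i=1}^{d'}\beta_i\alpha_m\,\varepsilon_{m-i,\,2d-m-i},
\]
and each $\varepsilon_{m-i,2d-m-i}$ vanishes by~\eqref{enu:C2}. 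This gives $\lambda_d(\bm{\alpha},\bm{\beta})=0$ for all $\bm{\alpha},\bm{\beta}\in\FF_2^m$.

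Next, let $h\in H$ and let $\bm{\alpha}$ be its coordinate form. By Lemma~\ref{lm:commutator},
\[
      [\bm{e}_m,\bm{\alpha}]=\sum_{\ell=d'+1}^{d}\lambda_\ell(\bm{e}_m,\bm{\alpha})\bm{e}_\ell,
\]
and the $\ell=d$ term is zero by what we just proved. Hence the coordinate form of $[a_m,h]$ is supported on indices in $\{d'+1,\ldots,d-1\}$, so $[a_m,h]\in H_{d'+1,d-1}$. Applying $\varphi$, which sends $a_i$ to $a_{i+1}$, we obtain $[a_m,h]^\varphi\in H_{d'+2,d}$. Finally, Lemma~\ref{lm:concentric}\eqref{enu:1concentric} gives $Z(H)=H_{d'+1,d}$, and $H_{d'+2,d}\leq H_{d'+1,d}=Z(H)$, which concludes the proof.

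I do not anticipate any genuine obstacle: the key content is simply recognizing that the range of summation in $\lambda_d$ forces $j=m$ and that the remaining parameters $\varepsilon_{m-i,2d-m-i}$ are precisely the ones erased by~\eqref{enu:C2}. The only mild care needed is in confirming that $d-d'\geq d'$ so that the $\min$ truly equals $d'$ (otherwise one would need to analyze an additional case), but this is immediate from Lemma~\ref{lm:concentric}\eqref{enu:1concentric}.
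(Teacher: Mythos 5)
Your proof is correct and follows essentially the same route as the paper: expand $\lambda_d$ from~\eqref{eq:convolution}, use $d-d'\geq d'$ to see that only $j=m$ survives so that~\eqref{enu:C2} kills every term, then drop the $\ell=d$ summand in Lemma~\ref{lm:commutator} and apply $\varphi$. The only difference is that you spell out the $\min$/$\max$ bookkeeping more explicitly, which the paper leaves implicit.
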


\begin{proof}
      Since $d-d'\geq d'$, we derive from~\eqref{eq:convolution} and~\eqref{enu:C2} that, for each 
      $\bm{\alpha}=(\alpha_1,\ldots,\alpha_m)$ and $\bm{\beta}=(\beta_1,\ldots,\beta_m)$ in $\FF_2^m$, we have
      \[
            \lambda_d(\bm{\alpha},\bm{\beta})=\sum_{i=1}^{d'}\beta_i\alpha_m\varepsilon_{m-i,d-d'-i}=0.
      \]
      For each $h\in H$ with coordinate form $\bm{h}$, the above result together with Lemma~\ref{lm:commutator} implies that 
      \[
            [\bm{e_m},\bm{h}]=\sum_{\ell=d'+1}^d\lambda_\ell(\bm{e}_m,\bm{h})\bm{e}_\ell=\sum_{\ell=d'+1}^{d-1}\lambda_\ell(\bm{e}_m,\bm{h})\bm{e}_\ell,
      \]
      which shows $[a_m,h]\in H_{d'+1,d-1}$. As a consequence, $[a_m,h]^\varphi\in  H_{d'+1,d-1}^\varphi=H_{d'+2,d}\leq Z(H)$.
\end{proof}

\begin{lemma}\label{lm:y}
      Suppose that $\eqref{enu:C1}$ and $\eqref{enu:C2}$ hold, and let $\bm{\alpha}=(\alpha_1,\ldots,\alpha_m)\in \FF_2^m$. Then
      \[
            \bm{\alpha}^{y_\tau}=\bm{\alpha}+
            \big(\sum_{i=d}^{m-1}\alpha_i\varepsilon_{i,0}+\alpha_m\tau_{d+1}\big)\bm{e}_{2d'}.
      \]
\end{lemma}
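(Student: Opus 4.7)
The plan is to evaluate $\bm{\alpha}^{y_\tau}$ by direct unfolding. Writing $w := x_\tau R(\tau) x_\tau^{-1} R(a_m)$, the task reduces to computing $\bm{\alpha}^{w^2}=(\bm{\alpha}^w)^w$ using the coordinate-level formulas of Lemmas~\ref{lm:x coordinate form},~\ref{lm:x^-1 coordinate form},~\ref{lm:multiplication} and~\ref{lm:commutator}, together with the structural simplifications in Lemma~\ref{lm:C2}. The conditions~\eqref{enu:C1} and~\eqref{enu:C2} enter in two crucial ways. First,~\eqref{enu:C2} combined with Lemma~\ref{lm:C2} forces $\lambda_d(\bm{e}_m,\cdot)\equiv 0$ and places every commutator with $a_m$ inside the central subgroup $H_{d'+1,d-1}$. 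Second,~\eqref{enu:C1} pins down the single value $\varepsilon_{d,0}=1$ and kills $\varepsilon_{d,i}$ for $i\geq 1$, producing a unique surviving summand in each $\lambda_\ell(-,\bm{\tau})$ expression that involves the trailing coordinate of $\bm{\tau}$.

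In the first pass, I would compute $\bm{\alpha}^w$ in four substeps: apply $x_\tau$ to obtain $\bm{\beta}$ via Lemma~\ref{lm:x coordinate form}; right-multiply by $\bm{\tau}$ via Lemma~\ref{lm:multiplication} to obtain $\bm{\gamma}$; apply $x_\tau^{-1}$ via Lemma~\ref{lm:x^-1 coordinate form} to obtain $\bm{\delta}$; and finally add $\bm{e}_m$, which is pure addition because $\lambda_\ell(-,\bm{e}_m)=0$ by Lemma~\ref{lm:properties}\eqref{enu:algebra}. Under~\eqref{enu:C2}, the $\ell=d+1$ term in Lemma~\ref{lm:x coordinate form} drops out, so the non-$\phi$-shift part of $\bm{\beta}$ is supported in indices $\{1\}\cup\{d'+2,\ldots,d\}$. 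Since the commutator corrections that appear lie in $Z(H)=H_{d'+1,d}$, they pass through subsequent multiplications as pure additions. After collecting terms, I expect $\bm{\alpha}^w$ to take the form of a $\phi$-twisted translate of $\bm{\alpha}$ plus a central correction whose coefficients are $\FF_2$-linear in $\alpha_1,\alpha_m,\alpha_d,\ldots,\alpha_{m-1}$ and $\tau_2,\ldots,\tau_{d+1}$.

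In the second pass, I would feed the simplified $\bm{\alpha}^w$ back through the same pipeline. Since the $\phi$-shift is applied twice and we work in characteristic~$2$, the ``linear part'' of $w^2$ acts trivially and only the central defect survives. The coefficient at $\bm{e}_{2d'}$ should assemble from two sources. The summand $\alpha_m\tau_{d+1}$ arises from the interaction between the $\alpha_m\bm{\tau}$ introduced by $x_\tau$ in the first pass and the $\bm{\tau}$ multiplied in during the second; this pairing is governed by $\varepsilon_{d,0}=1$ from~\eqref{enu:C1}, and after the two $\phi$-shifts it lands at coordinate $d'+(d-d')=2d'$. The summand $\sum_{i=d}^{m-1}\alpha_i\varepsilon_{i,0}$ arises from $\lambda_\ell(\bm{e}_m,\bm{\alpha}^{\phi^{-1}}+\bm{\tau}^{\phi^{-1}})$-type contributions in the $x_\tau^{-1}$ substeps, anchored at coordinate $2d'$ by the index alignment implicit in~\eqref{eq:e_m,alpha}. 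Every other potential contribution must be checked to cancel modulo~$2$ or to vanish by~\eqref{enu:C1} or~\eqref{enu:C2}.

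The main obstacle will be the bookkeeping: unfolding four compositions twice generates many nominally distinct terms, indexed by the binary flags $\alpha_1,\alpha_m,\beta_1,\gamma_1,\delta_1$, and one must verify that every contribution outside of $\bm{e}_{2d'}$ either cancels in $\FF_2$ or vanishes under~\eqref{enu:C1} or~\eqref{enu:C2}. The two organizing slogans that should keep this feasible are ``every $\lambda_\ell(\bm{e}_m,-)$-sum truncates before $\ell=d$'' (so each correction sits in $H_{d'+1,d-1}$) and ``commutators with $a_m$ are central'' (so they commute freely through later multiplications). Together these reduce each stage to additive $\FF_2$-linear manipulation rather than full non-abelian group-theoretic computation, from which the one-coordinate defect stated in the lemma should emerge by careful accounting.
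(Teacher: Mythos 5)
Your overall strategy---unfolding $y_\tau=\big(x_\tau R(\tau)x_\tau^{-1}R(a_m)\big)^2$ stage by stage and tracking a central defect---is workable and is in essence the same computation the paper performs; the paper merely organizes it at the group-element level, first proving the identity $(a_1^{\alpha_1}\cdots a_m^{\alpha_m})^{y_\tau}=h\,[a_m,c^{\varphi^{-1}}]\,[a_m,(\tau^2)^{\varphi^{-1}}]^{\alpha_m}a_m^{\alpha_m}$ with $h=a_1^{\alpha_1}\cdots a_{m-1}^{\alpha_{m-1}}$ and $c=[\tau,h^\varphi]$, so that centrality of the corrections (Lemma~\ref{lm:C2}, which needs \eqref{enu:C2}) and $\tau^4=1$ do all the cancellation before any coordinates appear. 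The genuine gap in your proposal is that the entire content of the lemma \emph{is} the exact value of the defect, and you defer precisely that (``should assemble'', ``must be checked to cancel'', ``should emerge by careful accounting'') without carrying it out.

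Moreover, the partial accounting you do offer contains concrete errors, which indicates the bookkeeping has not actually been traced. (i) The summand $\sum_{i=d}^{m-1}\alpha_i\varepsilon_{i,0}$ does not originate in the $\lambda_\ell(\bm{e}_m,\cdot)$ corrections of the $x_\tau^{-1}$ substep: it is the $(d'+1)$-th coordinate of $[\tau,h^\varphi]$ produced in the $R(\tau)$ substep, namely $\lambda_{d'+1}(\bm{h}^\phi,\bm{\tau})=\sum_{j=d+1}^{m}\tau_1\alpha_{j-1}\varepsilon_{j-1,0}$; a term $\lambda_\ell(\bm{e}_m,\bm{\beta})=\sum_i\beta_i\varepsilon_{m-i,\ell-d'-i}$ attaches the parameters $\varepsilon_{m-i,\cdot}$ to the \emph{low} coordinates $\beta_1,\ldots,\beta_{d'}$ of its second argument, so it cannot produce $\alpha_d\varepsilon_{d,0}+\dots+\alpha_{m-1}\varepsilon_{m-1,0}$. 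The only role of \eqref{enu:C1} here is to give $[a_{d'},a_m]=a_{2d'}$, which relocates this quantity to coordinate $2d'$. (ii) Your justification of the landing coordinate, ``$d'+(d-d')=2d'$'', is false arithmetic: $d'+(d-d')=d$, and $d\neq 2d'$ whenever $3d>2m$; the correct reason is again $[a_{d'},a_m]=a_{2d'}$ (the commutator $[a_i,a_j]$ begins at coordinate $d'+i$, here with $i=d'$). (iii) The non-shift part of $\bm{\alpha}^{x_\tau}$ is supported on $\{2,\ldots,d+1\}$ (the $\bm{e}_1$ cancels against $\tau_1=1$ while $\tau_2,\ldots,\tau_{d+1}$ survive), not on $\{1\}\cup\{d'+2,\ldots,d\}$. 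None of these defects is fatal to the method, but since the lemma is nothing but this accounting, the proof is not yet there; adopting the paper's group-element organization would make the two surviving commutators visible immediately and reduce the second pass to conjugating a central element.
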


\begin{proof}
      Let $h=a_1^{\alpha_1}\dots a_{m-1}^{\alpha_{m-1}}$, $b=[a_m,h]$ and $c=[\tau,h^\varphi]$. We first prove that 
      \begin{equation}\label{eq:preY}
            (a_1^{\alpha_1}\dots a_m^{\alpha_{m}})^{y_\tau}= h [a_m,c^{\varphi^{-1}}] [a_m,(\tau^2)^{\varphi^{-1}}]^{\alpha_m} a_m^{\alpha_m}.
      \end{equation}

      Assume $\alpha_m=0$. It is straightforward to deduce from the definition of $x_\tau$ that
      \[
            h^{x_\tau R(\tau)x_\tau^{-1} R(a_m)}
            =(h^\varphi\tau)^{x_\tau^{-1}R(a_m)}
            =(\tau h^\varphi c)^{x_\tau^{-1}R(a_m)}
            =a_m h c^{\varphi^{-1}}a_m
            =hc^{\varphi^{-1}}[a_m,c^{\varphi^{-1}}]b.
      \]
      Recall from~\eqref{eq:y} that $y_\tau=\big(x_\tau R(\tau)x_\tau^{-1} R(a_m)\big)^2$. Thus $(a_1^{\alpha_1}\dots a_m^{\alpha_{m}})^{y_\tau}$ is equal to 
      \[
            h^{y_\tau}=(hc^{\varphi^{-1}}[a_m,c^{\varphi^{-1}}]b)^{x_\tau R(\tau)x_\tau^{-1} R(a_m)}
            =(h^\varphi c [a_m,c^{\varphi^{-1}}]^\varphi b^\varphi \tau)^{x_\tau^{-1} R(a_m)}.
      \]
      Since $b,c\in Z(H)$ and Lemma~\ref{lm:C2} implies that $[a_m,c^{\varphi^{-1}}]^\varphi, b^\varphi\in Z(H)$, we obtain that 
      \begin{align*}
            h^{y_\tau}
            &=(h^\varphi c [a_m,c^{\varphi^{-1}}]^\varphi b^\varphi \tau)^{x_\tau^{-1} R(a_m)}\\
            &=(\tau h^\varphi [a_m,c^{\varphi^{-1}}]^\varphi b^\varphi)^{x_\tau^{-1} R(a_m)}=a_mh[a_m,c^{\varphi^{-1}}] ba_m=h[a_m,c^{\varphi^{-1}}].
      \end{align*}
      This proves~\eqref{eq:preY} for the case $\alpha_m=0$. 

      Now assume $\alpha_m=1$. Since $b^\varphi\in Z(H)$, we obtain
      \begin{align*}
            (ha_m)^{x_\tau R(\tau)x_\tau^{-1} R(a_m)}
            &=(a_mhb)^{x_\tau R(\tau)x_\tau^{-1} R(a_m)}\\
            &=(\tau h^\varphi
            b^\varphi\tau)^{x_\tau^{-1} R(a_m)}=\big(\tau^2h^\varphi c
            b^\varphi\big)^{x_\tau^{-1} R(a_m)}
            =(\tau^2)^{\varphi^{-1}}h c^{\varphi^{-1}}ba_m,
      \end{align*}
      and thus $(a_1^{\alpha_1}\dots a_m^{\alpha_{m}})^{y_\tau}$ is equal to 
      \begin{align*}
            (ha_m)^{y_\tau}
            &=\big((\tau^2)^{\varphi^{-1}}h c^{\varphi^{-1}}ba_m\big)^{x_\tau R(\tau)x_\tau^{-1} R(a_m)}\\
            &=\big(a_m (\tau^2)^{\varphi^{-1}}[a_m,(\tau^2)^{\varphi^{-1}}] h c^{\varphi^{-1}} [a_m,c^{\varphi^{-1}}]\big)^{x_\tau R(\tau)x_\tau^{-1} R(a_m)}\\
            &=\big(\tau^3[a_m,(\tau^2)^{\varphi^{-1}}]^\varphi h^\varphi c [a_m,c^{\varphi^{-1}}]^\varphi\tau\big)^{x_\tau^{-1} R(a_m)}.
      \end{align*}
      Note by Lemma~\ref{lm:C2} that $[a_m,(\tau^2)^{\varphi^{-1}}]^\varphi$, $[a_m,c^{\varphi^{-1}}]^\varphi\in Z(H)$ and by Lemma~\ref{lm:concentric}(e) that $\tau^4=1$. These combined with $c\in Z(H)$ give that
      \begin{align*}
            (ha_m)^{y_\tau}
            &=\big(\tau^4[a_m,(\tau^2)^{\varphi^{-1}}]^\varphi h^\varphi [a_m,c^{\varphi^{-1}}]^\varphi\big)^{x_\tau^{-1} R(a_m)}\\
            &=\big([a_m,(\tau^2)^{\varphi^{-1}}]^\varphi h^\varphi [a_m,c^{\varphi^{-1}}]^\varphi\big)^{x_\tau^{-1} R(a_m)}=[a_m,(\tau^2)^{\varphi^{-1}}] h [a_m,c^{\varphi^{-1}}] a_m,
      \end{align*}
      completing the proof of~\eqref{eq:preY}.

      Next we calculate $[a_m,c^{\varphi^{-1}}]$ and $[a_m,(\tau^2)^{\varphi^{-1}}]^{\alpha_m}$. It follows from Lemma~\ref{lm:e_i,e_j} and~\eqref{enu:C1} that 
      \[
            [\bm{e}_{d'},\bm{e}_m]=\sum_{\ell=2d'}^d\varepsilon_{d,\ell-2d'}\bm{e}_\ell=\bm{e}_{2d'},
      \]
      which means that $[a_{d'},a_m]=a_{2d'}$. Let $\omega$ be the $(d'+1)$-th coordinate of $c=[\tau,h^\varphi]$. Since $c\in Z(H)=H_{d'+1,d}$, we have $c^{\varphi^{-1}}\in H_{d',d-1}$, and thus 
      \[
            [a_m,c^{\varphi^{-1}}]=[a_m,a_{d'}]^\omega=[a_{d'},a_m]^\omega=a_{2d'}^\omega.
      \]
      Let $\bm{h}=(\alpha_1,\ldots,\alpha_{m-1},0)$. It follows from Lemma~\ref{lm:commutator},~\eqref{eq:convolution} and $d\geq 2d'$ that
      \[
            \omega=\lambda_{d'+1}(\bm{\tau},\bm{h}^\phi)+\lambda_{d'+1}(\bm{h}^\phi,\bm{\tau})
            =\sum_{j=d+1}^m \tau_1 \alpha_{j-1} \varepsilon_{j-1,0}
            =\sum_{j=d}^{m-1} \alpha_j \varepsilon_{j,0}.
      \]
      Recalling from Lemma~\ref{lm:concentric}\eqref{enu:5concentric} that $\tau^2\in H'\leq H_{d'+1,d}$, we deduce from Lemma~\ref{lm:multiplication} that the coordinate form of $(\tau^2)^{\varphi^{-1}}=(\tau^2)^{\phi^{-1}}$ is 
      \[
            (\bm{\tau}\bm{\tau})^{\phi^{-1}}=\sum\limits_{\ell=d'+1}^d \lambda_\ell(\bm{\tau},\bm{\tau})\bm{e}_{\ell-1}.
      \]
      Hence we derive from $\tau^2\in H_{d'+1,d}$ and~\eqref{eq:convolution} that
      \[
            [a_m,(\tau^2)^{\varphi^{-1}}]^{\alpha_m}=[a_m,a_{d'}]^{\lambda_{d'+1}(\bm{\tau},\bm{\tau})\alpha_m}=a_{2d'}^{\alpha_m\lambda_{d'+1}(\bm{\tau},\bm{\tau})}=a_{2d'}^{\alpha_m\tau_{d+1}}.
      \]
      
      In summary, viewing $[a_m,c^{\varphi^{-1}}]\in Z(H)$ and $[a_m,(\tau^2)^{\varphi^{-1}}]^{\alpha_m}\in Z(H)$, we conclude that the coordinate form of $(a_1^{\alpha_1}\dots a_m^{\alpha_{m}})^{y_\tau}= h [a_m,c^{\varphi^{-1}}] [a_m,(\tau^2)^{\varphi^{-1}}]^{\alpha_m} a_m^{\alpha_m}$ is 
      \[
            \bm{\alpha}+\big(\omega+\alpha_m\tau_{d+1}\big)\bm{e}_{2d'}
            =\bm{\alpha}+\big(\sum_{i=d}^{m-1}\alpha_i\varepsilon_{i,0}+\alpha_m\tau_{d+1}\big)\bm{e}_{2d'}.
      \]
      This completes the proof.
\end{proof}

To study the coordinate form of $y_\tau^{x_\tau^t}$, we define a sequence of functions $f_t: \FF_2^m\times \FF_2^m\to \FF_2$ $(t\in \Z)$ by letting
\begin{equation}\label{eq:regular_f_i}
      f_0(\bm{\alpha},\bm{\tau})=\sum_{i=d}^{m-1}\alpha_i\varepsilon_{i,0}+\alpha_m\tau_{d+1}\ \text{ and }\  f_t(\bm{\alpha},\bm{\tau})=f_0(\bm{\alpha}^{x_\tau^{-t}},\bm{\tau}) \ \text{ for each}\ t\in\Z.
\end{equation}
We will write $f_t(\bm{\alpha},\bm{\tau})$ as $f_t(\bm{\alpha})$ for brevity when $\tau$ is fixed.

\begin{lemma}\label{lm:regular_y^{x^t}}
    Suppose that $\eqref{enu:C1}$ and $\eqref{enu:C2}$ hold, and let $f_t$ be as in~\eqref{eq:regular_f_i}. Then for each $\bm{\alpha}
    \in\FF_2^m$ and fixed $\tau\in a_1H_{2,d+1}$, the following hold:         
    \begin{align}
          \bm{\alpha}^{x_\tau^{-t}y_\tau x_\tau^t}&=
          \bm{\alpha}+f_t(\bm{\alpha}) \bm{e}_{2d'+t}\ \text{ for each }\ t\in\{-d'+1,\ldots,d-d'\},\label{eq:regular_y^{x^t}}\\
          \bm{\alpha}^{x_\tau^{-(d-d'+1)}y_\tau x_\tau^{d-d'+1}}&=
          \bm{\alpha}+f_{d-d'+1}(\bm{\alpha})\Big(\bm{\tau}+\sum_{\ell=d'+2}^d\lambda_{\ell-1}(\bm{e}_m,\bm{\alpha}^{x_\tau^{-1}})\bm{e}_{\ell}\Big).\label{eq:regular_y^{x^{2d-m+1}}}
    \end{align}
\end{lemma}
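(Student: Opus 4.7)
The plan is to split $x_\tau^{-t}y_\tau x_\tau^t$ into three stages: send $\bm{\alpha}$ to $\bm{\beta}:=\bm{\alpha}^{x_\tau^{-t}}$; then apply Lemma~\ref{lm:y}, which under~\eqref{enu:C1}--\eqref{enu:C2} gives $\bm{\beta}^{y_\tau}=\bm{\beta}+f_0(\bm{\beta})\bm{e}_{2d'}=\bm{\beta}+f_t(\bm{\alpha})\bm{e}_{2d'}$ by the definition of $f_t$; and finally apply $x_\tau^t$. Because the coefficient $f_t(\bm{\alpha})$ does not depend on the last transport, everything reduces to understanding how the single-coordinate perturbation $\bm{e}_{2d'}$ is carried by $x_\tau^t$, which I plan to handle by passing to the group element $b\cdot a_{2d'}$ (with $b$ the group element corresponding to $\bm{\beta}$) and invoking the commutation relation of Lemma~\ref{lm:condition}.

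For equation~\eqref{eq:regular_y^{x^t}}, I would observe via Lemma~\ref{lm:multiplication} and~\eqref{eq:convolution} that $\lambda_\ell(\,\cdot\,,\bm{e}_{2d'})$ vanishes identically, since the unique nonzero entry of $\bm{e}_{2d'}$ sits at position $2d'>d'$, outside the summation range on the second argument; hence $\bm{\beta}+\bm{e}_{2d'}$ is the coordinate of $b\cdot a_{2d'}$. Iterating Lemma~\ref{lm:condition} yields $R(a_{2d'})^{x_\tau^t}=R(a_{2d'+t})$ for every $t\in\{-d'+1,\ldots,d-d'\}$, since this range is precisely what keeps the intermediate indices in $\{1,\ldots,m-1\}$ for $t>0$ and in $\{2,\ldots,m\}$ for $t<0$ (using $d-d'\ge d'$ from Lemma~\ref{lm:concentric}\eqref{enu:1concentric}). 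Thus $(b\cdot a_{2d'})^{x_\tau^t}=b^{x_\tau^t}\cdot a_{2d'+t}$, and the analogous vanishing of $\lambda_\ell(\,\cdot\,,\bm{e}_{2d'+t})$ (still because $2d'+t>d'$) converts this back into the coordinate $\bm{\alpha}+f_t(\bm{\alpha})\bm{e}_{2d'+t}$.

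For equation~\eqref{eq:regular_y^{x^{2d-m+1}}} with $t=d-d'+1$, the shift above reaches $a_m$ after $t-1=d-d'$ iterations, giving $(b\cdot a_{2d'})^{x_\tau^{t-1}}=b^{x_\tau^{t-1}}\cdot a_m$, whose coordinate is $\bm{\mu}+\bm{e}_m$ where $\bm{\mu}:=\bm{\alpha}^{x_\tau^{-1}}$. Lemma~\ref{lm:condition} does not cover the terminal $x_\tau$, so I would apply Lemma~\ref{lm:x coordinate form} directly to $\bm{\eta}:=\bm{\mu}+\bm{e}_m$. Using $\bm{e}_m^\phi=\bm{e}_1$, $\eta_m=\mu_m+1$, and Lemma~\ref{lm:properties}\eqref{enu:domain} (which forces $\lambda_{\ell-1}(\bm{e}_m,\bm{\eta})=\lambda_{\ell-1}(\bm{e}_m,\bm{\mu})$), this gives $\bm{\eta}^{x_\tau}=\bm{\mu}^\phi+\bm{e}_1+(\mu_m+1)S$, where $S=\bm{e}_1+\bm{\tau}+\sum_{\ell=d'+2}^{d+1}\lambda_{\ell-1}(\bm{e}_m,\bm{\mu})\bm{e}_\ell$ is the bracket in Lemma~\ref{lm:x coordinate form} evaluated on $\bm{\mu}$. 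Since the same lemma applied to $\bm{\mu}$ gives $\bm{\alpha}=\bm{\mu}^{x_\tau}=\bm{\mu}^\phi+\mu_m S$, the $\mu_m$-weighted parts cancel, leaving $\bm{\alpha}+\bm{e}_1+S=\bm{\alpha}+\bm{\tau}+\sum_{\ell=d'+2}^{d+1}\lambda_{\ell-1}(\bm{e}_m,\bm{\mu})\bm{e}_\ell$. Finally, Lemma~\ref{lm:C2} (invoked through~\eqref{enu:C2}) annihilates the $\ell=d+1$ summand, truncating the sum at $\ell=d$ and producing the stated formula after reinstating the factor $f_t(\bm{\alpha})$.

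The main obstacle I anticipate is the final step: since $x_\tau$ is not a homomorphism, the mutual cancellation between $(\mu_m+1)S$ and $\mu_m S$ becomes visible only after one rewrites both $\bm{\eta}^{x_\tau}$ and $\bm{\mu}^{x_\tau}$ in the common variables $\bm{\mu}$ and $S$ before expanding anything. Condition~\eqref{enu:C2} plays only a cosmetic role here, trimming the sum from $d+1$ down to $d$ via Lemma~\ref{lm:C2}, while condition~\eqref{enu:C1} enters only at the outset through Lemma~\ref{lm:y}.
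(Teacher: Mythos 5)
Your proof is correct, and for the main equation~\eqref{eq:regular_y^{x^t}} it takes a genuinely different route from the paper. The paper runs two inductions on $t$ (upward on $\{0,\ldots,d-d'\}$ and downward on $\{-d'+1,\ldots,0\}$), at each step pushing the perturbation $\bm{e}_{2d'+t}$ through one application of $x_\tau^{\pm1}$ via the explicit coordinate formulas of Lemmas~\ref{lm:x coordinate form} and~\ref{lm:x^-1 coordinate form}, using that $2d'+t\notin\{1,\ldots,d'\}\cup\{m\}$ so the correction terms are unaffected. You instead apply Lemma~\ref{lm:y} once at the middle stage and transport the whole perturbation in one stroke: since $\lambda_\ell(\cdot,\bm{e}_{2d'})$ vanishes (the nonzero entry sits beyond position $d'$), the perturbed point is literally $b\,a_{2d'}$, and the conjugation relation $x_\tau^{-t}R(a_{2d'})x_\tau^t=R(a_{2d'+t})$ from Lemma~\ref{lm:condition} does the rest; your range checks on the intermediate indices are the correct analogue of the paper's condition $2d'+t\notin\{1,\ldots,d'\}\cup\{m\}$. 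This is arguably cleaner and avoids the coordinate formulas entirely in the transport step, at the cost of the small translation back and forth between coordinate forms and group elements. For~\eqref{eq:regular_y^{x^{2d-m+1}}} your computation is essentially identical to the paper's: both apply Lemma~\ref{lm:x coordinate form} to $\bm{\alpha}^{x_\tau^{-1}}+c\bm{e}_m$, use Lemma~\ref{lm:properties}\eqref{enu:domain} to replace $\lambda_{\ell-1}(\bm{e}_m,\bm{\eta})$ by $\lambda_{\ell-1}(\bm{e}_m,\bm{\alpha}^{x_\tau^{-1}})$, cancel the $\mu_m$-weighted parts in characteristic $2$, and kill the $\ell=d+1$ summand with Lemma~\ref{lm:C2}. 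One small caveat on your closing remark: \eqref{enu:C2} is not merely cosmetic here, since it is also a hypothesis of Lemma~\ref{lm:y} (it is what makes $[a_m,h]^\varphi$ central so that $y_\tau$ is a single-coordinate perturbation in the first place); you do cite Lemma~\ref{lm:y} under both conditions, so this does not affect the validity of the argument.
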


\begin{proof}
    Since $\tau$ is fixed, we abbreviate $x_\tau$ and $y_\tau$ to $x$ and $y$, respectively. We first use induction on $t$ to show that~\eqref{eq:regular_y^{x^t}} holds for $t\in \{0,\ldots,d-d'\}$. It follows from Lemma~\ref{lm:y} and~\eqref{eq:regular_f_i} that $\bm{\alpha}^{y}=
      \bm{\alpha}+f_0(\bm{\alpha}) \bm{e}_{2d'}$,
    which proves~\eqref{eq:regular_y^{x^t}} for $t=0$. Assume inductively that~\eqref{eq:regular_y^{x^t}} holds for some $t\in \{0,\ldots, d-d'-1\}$, which means 
    \[
      \bm{\alpha}^{x^{-t}yx^t}=\bm{\alpha}+f_t(\bm{\alpha})\bm{e}_{2d'+t}.
    \]
    This together with~\eqref{eq:regular_f_i} shows that
    \[
      \bm{\alpha}^{x^{-(t+1)}y x^{t+1}}
      =(\bm{\alpha}^{x^{-1}})^{(x^{-t}yx^t)x}=\big(\bm{\alpha}^{x^{-1}}+f_t(\bm{\alpha}^{x^{-1}})\bm{e}_{2d'+t}\big)^{x}=\big(\bm{\alpha}^{x^{-1}}+f_{t+1}(\bm{\alpha})\bm{e}_{2d'+t}\big)^{x}.
    \]
    Since $2d'+t\notin\{1,\ldots,d'\}\cup\{m\}$, we then deduce from Lemmas~\ref{lm:x coordinate form} and~\ref{lm:properties}\eqref{enu:algebra} that
    \[
      \bm{\alpha}^{x^{-(t+1)}yx^{t+1}}
      =(\bm{\alpha}^{x^{-1}})^x+f_{t+1}(\bm{\alpha})\bm{e}_{2d'+t+1}
      =\bm{\alpha}+f_{t+1}(\bm{\alpha})\bm{e}_{2d'+t+1},
    \]
    which completes the induction.

    Next, we apply another induction on $t$ to prove that~\eqref{eq:regular_y^{x^t}} holds for $t\in \{-d'+1,\ldots,0\}$. The base case $t=0$ is proved in the preceding paragraph. Assume inductively that~\eqref{eq:regular_y^{x^t}} holds for some $t\in \{-d'+2,\ldots, 0\}$, which means
    \[
      \bm{\alpha}^{x^{-t}yx^t}=\bm{\alpha}+f_t(\bm{\alpha})\bm{e}_{2d'+t}.
    \]
    This combined with~\eqref{eq:regular_f_i} gives that 
    \begin{align*}
      \bm{\alpha}^{x^{-(t-1)}yx^{t-1}}
      =(\bm{\alpha}^x)^{(x^{-t}yx^t)x^{-1}}=\big(\bm{\alpha}^x+f_t(\bm{\alpha}^x)\bm{e}_{2d'+t}\big)^{x^{-1}}=\big(\bm{\alpha}^x+f_{t-1}(\bm{\alpha})\bm{e}_{2d'+t}\big)^{x^{-1}}.
    \end{align*}
    Since $2d'+t\notin\{1,\ldots,d'+1\}$, we derive from Lemmas~\ref{lm:x^-1 coordinate form} and~\ref{lm:properties}\eqref{enu:algebra} that
    \[
      \bm{\alpha}^{x^{-(t-1)}yx^{t-1}}
      =(\bm{\alpha}^x)^{x^{-1}}+f_{t-1}(\bm{\alpha})\bm{e}_{2d'+t-1}
      =\bm{\alpha}+f_{t-1}(\bm{\alpha})\bm{e}_{2d'+t-1},
    \]
    This completes the induction and then the proof of~\eqref{eq:regular_y^{x^t}}.

    Finally, we verify~\eqref{eq:regular_y^{x^{2d-m+1}}}. It follows from~\eqref{eq:regular_y^{x^t}} that 
    \[
          \bm{\alpha}^{x^{-(d-d')}yx^{d-d'}}=
          \bm{\alpha}+f_{d-d'}(\bm{\alpha}) \bm{e}_m.
    \]
    This along with~\eqref{eq:regular_f_i} shows that 
    \begin{align*}
      \bm{\alpha}^{x^{-(d-d'+1)}yx^{d-d'+1}}
      &=(\bm{\alpha}^{x^{-1}})^{(x^{-(d-d')}yx^{d-d'})x}\\
      &=\big(\bm{\alpha}^{x^{-1}}+f_{d-d'}(\bm{\alpha}^{x^{-1}})\bm{e}_m\big)^{x}
      =\big(\bm{\alpha}^{x^{-1}}+f_{d-d'+1}(\bm{\alpha})\bm{e}_m\big)^{x}.
    \end{align*}
    Then we deduce from Lemmas~\ref{lm:x coordinate form} and~\ref{lm:properties}\eqref{enu:domain} that $\bm{\alpha}^{x^{-(d-d'+1)}yx^{d-d'+1}}$ is equal to
    \begin{align*}
      &\quad\,\,(\bm{\alpha}^{x^{-1}})^x+f_{d-d'+1}(\bm{\alpha})\bm{e}_1+f_{d-d'+1}(\bm{\alpha})\Big(\bm{e}_1+\bm{\tau}+\sum_{\ell=d'+2}^{d+1} \lambda_{\ell-1}\big(\bm{e}_m,\bm{\alpha}^{x^{-1}}+f_{d-d'+1}(\bm{\alpha})\bm{e}_m\big)\bm{e}_{\ell} \Big)\\
      &=\bm{\alpha}+f_{d-d'+1}(\bm{\alpha})\Big(\bm{\tau}+\sum_{\ell=d'+2}^{d+1} \lambda_{\ell-1}(\bm{e}_m,\bm{\alpha}^{x^{-1}})\bm{e}_{\ell} \Big).
    \end{align*}
    This combined with Lemma~\ref{lm:C2} completes the proof.
\end{proof}

\begin{corollary}\label{cor:fpiff}
      Suppose that $\eqref{enu:C1}$ and $\eqref{enu:C2}$ hold, let $t\in\{-d'+1,\ldots,d-d'+1\}$, and let $f_t$ be as in~\eqref{eq:regular_f_i}. Then $\bm{\alpha}\in \FF_2^m$ is a fixed point of $x_\tau^{-t}y_\tau x_\tau^t$ if and only if $f_t(\bm{\alpha},\bm{\tau})=0$.
\end{corollary}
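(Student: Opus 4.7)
The corollary is a direct translation of Lemma~\ref{lm:regular_y^{x^t}} into a fixed-point criterion, and the only thing to check is that the displacement vector $\bm{\alpha}^{x_\tau^{-t}y_\tau x_\tau^t} - \bm{\alpha}$ vanishes exactly when its scalar coefficient $f_t(\bm{\alpha},\bm{\tau})$ does. So the plan is to split into the two cases that Lemma~\ref{lm:regular_y^{x^t}} distinguishes and, in each case, verify that the vector multiplying $f_t(\bm{\alpha},\bm{\tau})$ is nonzero in $\FF_2^m$.

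In the main range $t\in\{-d'+1,\ldots,d-d'\}$, Lemma~\ref{lm:regular_y^{x^t}} gives
\[
      \bm{\alpha}^{x_\tau^{-t}y_\tau x_\tau^t}-\bm{\alpha}=f_t(\bm{\alpha},\bm{\tau})\,\bm{e}_{2d'+t}.
\]
Here $2d'+t\in\{d'+1,\ldots,d\}$, so $\bm{e}_{2d'+t}$ is a standard basis vector of $\FF_2^m$ and in particular nonzero. Hence $\bm{\alpha}$ is fixed by $x_\tau^{-t}y_\tau x_\tau^t$ if and only if $f_t(\bm{\alpha},\bm{\tau})=0$, which handles this range immediately.

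For the boundary case $t=d-d'+1$, Lemma~\ref{lm:regular_y^{x^t}} gives
\[
      \bm{\alpha}^{x_\tau^{-t}y_\tau x_\tau^t}-\bm{\alpha}=f_{d-d'+1}(\bm{\alpha},\bm{\tau})\Bigl(\bm{\tau}+\sum_{\ell=d'+2}^{d}\lambda_{\ell-1}(\bm{e}_m,\bm{\alpha}^{x_\tau^{-1}})\bm{e}_{\ell}\Bigr),
\]
so it suffices to show that the vector $\bm{v}:=\bm{\tau}+\sum_{\ell=d'+2}^{d}\lambda_{\ell-1}(\bm{e}_m,\bm{\alpha}^{x_\tau^{-1}})\bm{e}_{\ell}$ is nonzero. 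Looking at the first coordinate: the sum contributes only to coordinates indexed by $\ell\in\{d'+2,\ldots,d\}$, all strictly greater than $1$ since $d'\geq 2$. Thus the first coordinate of $\bm{v}$ equals the first coordinate of $\bm{\tau}$, which is $\tau_1=1$. So $\bm{v}\neq\bm{0}$, and again $\bm{\alpha}$ is fixed if and only if $f_{d-d'+1}(\bm{\alpha},\bm{\tau})=0$.

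There is no real obstacle: the lemma does all the work, and the only genuine check is the nonvanishing of the first coordinate of $\bm{v}$ in the boundary case, which uses only that $d'\geq 2$ and the explicit form $\bm{\tau}=(1,\tau_2,\ldots,\tau_{d+1},0,\ldots,0)$ set up in Subsection~\ref{subsec:2.1}.
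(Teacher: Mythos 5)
Your proof is correct and follows essentially the same route as the paper: the range $t\in\{-d'+1,\ldots,d-d'\}$ is immediate from the first display of Lemma~\ref{lm:regular_y^{x^t}}, and the boundary case $t=d-d'+1$ reduces to the nonvanishing of the coefficient vector, which the paper likewise settles by observing that its first coordinate is $\tau_1=1$. One immaterial slip: for $t\in\{-d'+1,\ldots,d-d'\}$ the index $2d'+t$ ranges over $\{d'+1,\ldots,m\}$ (it reaches $d'+d=m$ at $t=d-d'$), not $\{d'+1,\ldots,d\}$, but your argument only needs $\bm{e}_{2d'+t}$ to be a nonzero standard basis vector, which still holds.
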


\begin{proof}
    By~\eqref{eq:regular_y^{x^t}}, the conclusion holds for $t\in\{-d'+1,\ldots,d-d'\}$. Since $\tau_1=1$, it follows from~\eqref{eq:regular_y^{x^{2d-m+1}}} that the statement also holds for $t=d-d'+1$.
\end{proof}

\subsection{Technical lemmas}
In this subsection, we prove four technical lemmas to be used in Section~\ref{sec:PA}.

\begin{lemma}\label{lm:f_0,f_1}
      Suppose that $\eqref{enu:C1}$ and $\eqref{enu:C2}$ hold, and let $f_t$ be as in~\eqref{eq:regular_f_i}. Assume that $f_{d-d'+1}(\bm{\gamma},\bm{\tau})=1$ for some $\bm{\gamma}=(\gamma_1,\ldots,\gamma_m)$ and $\bm{\tau}=(1,\tau_2,\ldots,\tau_{d+1},0,\ldots,0)$ in $\FF_2^m$. Then the following hold:
      \begin{align*}
            f_0(\bm{\gamma}^{x_\tau^{-(d-d'+1)}y_\tau x_\tau^{d-d'+1}},\bm{\tau})&=f_0(\bm{\gamma},\bm{\tau})+\tau_d+\lambda_{d-1}(\bm{e}_m,\bm{\gamma}^{x_\tau^{-1}})+\tau_{d+1}\varepsilon_{d+1,0}+\gamma_m\tau_{d+1},\\
            f_{-1}(\bm{\gamma}^{x_\tau^{-(d-d'+1)}y_\tau x_\tau^{d-d'+1}},\bm{\tau})
            &=f_{-1}(\bm{\gamma},\bm{\tau})+\tau_{d-1}+\lambda_{d-2}(\bm{e}_m,\bm{\gamma}^{x_\tau^{-1}})+\big(\tau_d+\lambda_{d-1}(\bm{e}_m,\bm{\gamma}^{x_\tau^{-1}})\big)\varepsilon_{d+1,0}\\
            &+\tau_{d+1}\varepsilon_{d+2,0}+\gamma_m\lambda_{d-1}(\bm{e}_m,\bm{\tau}).
      \end{align*}
\end{lemma}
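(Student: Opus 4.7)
The plan is to set $\bm{\delta} := \bm{\gamma}^{x_\tau^{-(d-d'+1)} y_\tau x_\tau^{d-d'+1}}$ and expand $\bm{\delta}$ coordinate by coordinate using Lemma~\ref{lm:regular_y^{x^t}}, then substitute into $f_0$ for the first identity and into $f_0\circ x_\tau$ for the second (exploiting $f_{-1}(\bm{\alpha},\bm{\tau}) = f_0(\bm{\alpha}^{x_\tau},\bm{\tau})$ and Lemma~\ref{lm:x coordinate form}). Because the assumption $f_{d-d'+1}(\bm{\gamma},\bm{\tau}) = 1$ collapses \eqref{eq:regular_y^{x^{2d-m+1}}} to
\[
\bm{\delta} = \bm{\gamma} + \bm{\tau} + \sum_{\ell = d'+2}^{d} \lambda_{\ell-1}(\bm{e}_m, \bm{\gamma}^{x_\tau^{-1}})\bm{e}_\ell,
\]
one immediately reads off $\delta_d = \gamma_d + \tau_d + \lambda_{d-1}(\bm{e}_m, \bm{\gamma}^{x_\tau^{-1}})$, $\delta_{d+1} = \gamma_{d+1} + \tau_{d+1}$, and $\delta_i = \gamma_i$ for $i\geq d+2$; in particular $\delta_m = \gamma_m$.

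For the first identity I would plug these values into $f_0(\bm{\delta},\bm{\tau}) = \sum_{i=d}^{m-1}\delta_i\varepsilon_{i,0} + \delta_m\tau_{d+1}$, use $\varepsilon_{d,0}=1$ from \eqref{enu:C1} to absorb the $\gamma$-dependent terms back into $f_0(\bm{\gamma},\bm{\tau})$, and collect the residual correction to recover the stated expression.

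For the second identity I would form $\bm{\eta} := \bm{\delta}^{x_\tau} + \bm{\gamma}^{x_\tau}$ and apply Lemma~\ref{lm:x coordinate form} to each term. The crucial identity $\delta_m = \gamma_m$, combined with bilinearity of $\lambda$ (Lemma~\ref{lm:properties}\eqref{enu:bilinear}), causes the nonlinear correction in Lemma~\ref{lm:x coordinate form} to factor out with a common coefficient $\gamma_m$, yielding
\[
\bm{\eta} = (\bm{\delta}+\bm{\gamma})^\phi + \gamma_m\sum_{\ell=d'+2}^{d+1}\lambda_{\ell-1}(\bm{e}_m,\bm{\delta}+\bm{\gamma})\bm{e}_\ell.
\]
Now $\bm{\delta}+\bm{\gamma}$ agrees with $\bm{\tau}$ on positions $1,\ldots,d'+1$ and has its remaining nonzero entries concentrated in positions $d'+2,\ldots,d$, which are outside the support range detected by $\lambda_{\ell-1}(\bm{e}_m,\cdot)$; hence Lemma~\ref{lm:properties}\eqref{enu:domain} collapses each $\lambda_{\ell-1}(\bm{e}_m,\bm{\delta}+\bm{\gamma})$ to $\lambda_{\ell-1}(\bm{e}_m,\bm{\tau})$, and the $\ell=d+1$ summand vanishes by Lemma~\ref{lm:C2}. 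Extracting $\eta_d,\eta_{d+1},\eta_{d+2}$ (while $\eta_i=0$ for $i\geq d+3$) from the shifted expression and feeding them into $\sum_{i=d}^{m-1}\eta_i\varepsilon_{i,0} + \eta_m\tau_{d+1}$, together with one more use of $\varepsilon_{d,0}=1$, should deliver the claimed formula.

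The main obstacle is the bookkeeping in the second identity, where composing the two nonlinear permutations produces an interleaved pair of $\lambda$-sums. The argument stays tractable only because of two coincidences: (i) $\delta_m = \gamma_m$ lets the nonlinear correction in Lemma~\ref{lm:x coordinate form} be extracted with a common coefficient, so that $\bm{\eta}$ depends on $\bm{\gamma}$ only through the localized difference $\bm{\delta}+\bm{\gamma}$; and (ii) this difference has precisely the support pattern for Lemma~\ref{lm:properties}\eqref{enu:domain} together with Lemma~\ref{lm:C2} to reduce every $\lambda$-value in sight to an expression in $\bm{\tau}$ alone, with the shift $\phi$ relocating the $\bm{\gamma}^{x_\tau^{-1}}$-terms already present in $\bm{\delta}-\bm{\gamma}$ one step to the right. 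Weakening either observation would force one to carry $\bm{\gamma}$-dependent $\lambda$-polynomials through a second Lemma~\ref{lm:x coordinate form} expansion, and the resulting calculation would be markedly more intricate.
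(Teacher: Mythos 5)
Your proposal follows essentially the same route as the paper: both start from \eqref{eq:regular_y^{x^{2d-m+1}}} together with the hypothesis $f_{d-d'+1}(\bm{\gamma},\bm{\tau})=1$ to write $\bm{\gamma}^{z_\tau}=\bm{\gamma}+\bm{\tau}+\sum_{\ell=d'+2}^{d}\lambda_{\ell-1}(\bm{e}_m,\bm{\gamma}^{x_\tau^{-1}})\bm{e}_\ell$ (with $z_\tau=x_\tau^{-(d-d'+1)}y_\tau x_\tau^{d-d'+1}$) and then evaluate $f_0$ and $f_{-1}=f_0\circ x_\tau$ at this point. The only organizational difference is in the second identity: the paper first expands $f_{-1}(\bm{\alpha},\bm{\tau})$ as an explicit polynomial in the coordinates of a general $\bm{\alpha}$ via Lemma~\ref{lm:x coordinate form} and then substitutes $\bm{\gamma}^{z_\tau}$, whereas you exploit the $\FF_2$-linearity of $f_0$ and the coincidence $\delta_m=\gamma_m$ to reduce everything to $f_0(\bm{\delta}^{x_\tau}+\bm{\gamma}^{x_\tau},\bm{\tau})$; your version is slightly cleaner, and I checked that it reproduces the stated second identity exactly (your reductions via Lemma~\ref{lm:properties}\eqref{enu:bilinear}, \eqref{enu:domain} and Lemma~\ref{lm:C2} are all valid).

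One caveat on the first identity: carried out carefully, your computation gives $f_0(\bm{\gamma}^{z_\tau},\bm{\tau})=f_0(\bm{\gamma},\bm{\tau})+\tau_d+\lambda_{d-1}(\bm{e}_m,\bm{\gamma}^{x_\tau^{-1}})+\tau_{d+1}\varepsilon_{d+1,0}$ \emph{without} the final term $\gamma_m\tau_{d+1}$: since $(\bm{\gamma}^{z_\tau})_m=\gamma_m$, the $\alpha_m\tau_{d+1}$ contributions to $f_0(\bm{\gamma}^{z_\tau},\bm{\tau})$ and to $f_0(\bm{\gamma},\bm{\tau})$ cancel over $\FF_2$. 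The same conclusion follows from the paper's own (correct) intermediate line; the extra $\gamma_m\tau_{d+1}$ in the displayed statement arises from replacing $\sum_{i=d}^{m-1}\gamma_i\varepsilon_{i,0}$ by $f_0(\bm{\gamma},\bm{\tau})$ in the last step, which silently drops one copy of $\gamma_m\tau_{d+1}$. This is harmless downstream, because the lemma is only invoked with $\gamma_m=0$ in the proof of Lemma~\ref{lm:not determine}, but you should not expect your computation to ``recover the stated expression'' verbatim for the first identity.
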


\begin{proof}
      For convenience, write $z_\tau=x_\tau^{-(d-d'+1)}y_\tau x_\tau^{d-d'+1}$.
      Since $f_{d-d'+1}(\bm{\gamma},\bm{\tau})=1$, it follows from~\eqref{eq:regular_y^{x^{2d-m+1}}} that
      \begin{equation}\label{eq:alpha^z}
          \bm{\gamma}^{z_\tau}
          =\bm{\gamma}+\bm{\tau}+\sum_{\ell=d'+2}^d\lambda_{\ell-1}(\bm{e}_m,\bm{\gamma}^{x_\tau^{-1}})\bm{e}_\ell.
      \end{equation}
      Recall from~\eqref{eq:regular_f_i} that 
      \[
            f_0(\bm{\gamma},\bm{\tau})=\sum_{i=d}^{m-1}\gamma_i\varepsilon_{i,0}+\gamma_m\tau_{d+1}.
      \]
      This combined with~\eqref{eq:alpha^z} gives
      \[
            f_0(\bm{\gamma}^{z_\tau},\bm{\tau})=\sum_{i=d}^{m-1}\gamma_i\varepsilon_{i,0}+\big(\tau_d+\lambda_{d-1}(\bm{e}_m,\bm{\gamma}^{x_\tau^{-1}})\big)\varepsilon_{d,0}+\tau_{d+1}\varepsilon_{d+1,0}+\gamma_m\tau_{d+1}.
      \]
      Since~\eqref{enu:C1} requires that $\varepsilon_{d,0}=1$, we obtain
      \[
            f_0(\bm{\gamma}^{z_\tau},\bm{\tau})=f_0(\bm{\gamma},\bm{\tau})+\tau_d+\lambda_{d-1}(\bm{e}_m,\bm{\gamma}^{x_\tau^{-1}})+\tau_{d+1}\varepsilon_{d+1,0}+\gamma_m\tau_{d+1},
      \]
      verifying the first conclusion.

      Since~\eqref{enu:C1} and~\eqref{enu:C2} hold, it follows from~\eqref{eq:regular_f_i}, Lemmas~\ref{lm:x coordinate form} and~\ref{lm:C2} that 
      \begin{align*}
            f_{-1}(\bm{\gamma},\bm{\tau})
            &=f_0(\bm{\gamma}^{x_\tau},\bm{\tau})\\
            &=f_0\Big(\bm{\gamma}^\phi+\gamma_m\Big(\bm{e}_1+\bm{\tau}+\sum_{\ell=d'+2}^{d+1}\lambda_{\ell-1}(\bm{e}_m,\bm{\gamma})\bm{e}_{\ell}\Big),\bm{\tau}\Big)\\
            &=f_0(\bm{\gamma}^\phi,\bm{\tau})+\gamma_m f_0(\bm{e}_1+\bm{\tau},\bm{\tau})+\gamma_mf_0\Big(\sum_{\ell=d'+2}^{d+1}\lambda_{\ell-1}(\bm{e}_m,\bm{\gamma})\bm{e}_{\ell},\bm{\tau}\Big)\\
            &=\sum_{i=d}^{m-1}\gamma_{i-1}\varepsilon_{i,0}+\gamma_{m-1}\tau_{d+1}+\gamma_m\big(\tau_d+\lambda_{d-1}(\bm{e}_m,\bm{\gamma})\big)\varepsilon_{d,0}+\gamma_m\big(\tau_{d+1}+\lambda_d(\bm{e}_m,\bm{\gamma})\big)\varepsilon_{d+1,0}\\
            &=\sum_{i=d-1}^{m-2}\gamma_i\varepsilon_{i+1,0}+\gamma_{m-1}\tau_{d+1}+\gamma_m\big(\tau_d+\lambda_{d-1}(\bm{e}_m,\bm{\gamma})\big)+\gamma_m\tau_{d+1}\varepsilon_{d+1,0}.
      \end{align*}
      This in conjunction with~\eqref{eq:alpha^z} and $\varepsilon_{d,0}=1$ yields that
      \begin{align*}
            f_{-1}(\bm{\gamma}^{z_\tau},\bm{\tau})
            &=f_{-1}\Big(\bm{\gamma}+\bm{\tau}+\sum_{\ell=d'+2}^d\lambda_{\ell-1}(\bm{e}_m,\bm{\gamma}^{x_\tau^{-1}})\bm{e}_\ell,\bm{\tau}\Big)\\
            &=\sum_{i=d-1}^{m-2}\gamma_i\varepsilon_{i+1,0}+\gamma_m\big(\tau_d+\lambda_{d-1}(\bm{e}_m,\bm{\gamma}^{z_\tau})\big)+\gamma_{m-1}\tau_{d+1}+\tau_{d-1}+\lambda_{d-2}(\bm{e}_m,\bm{\gamma}^{x_\tau^{-1}})\\
            &+\big(\gamma_m\tau_{d+1}+\tau_d+\lambda_{d-1}(\bm{e}_m,\bm{\gamma}^{x_\tau^{-1}})\big)\varepsilon_{d+1,0}+\tau_{d+1}\varepsilon_{d+2,0}.
      \end{align*}
      Note by parts~\eqref{enu:domain} and~\eqref{enu:bilinear} of Lemma~\ref{lm:properties} that 
      \[
            \lambda_{d-1}(\bm{e}_m,\bm{\gamma}^{z_\tau})=\lambda_{d-1}(\bm{e}_m,\bm{\gamma}+\bm{\tau})=\lambda_{d-1}(\bm{e}_m,\bm{\gamma})+\lambda_{d-1}(\bm{e}_m,\bm{\tau}).
      \]
      Therefore,
      \begin{align*}
            f_{-1}(\bm{\gamma}^{z_\tau},\bm{\tau})
            &=f_{-1}(\bm{\gamma},\bm{\tau})+\tau_{d-1}+\lambda_{d-2}(\bm{e}_m,\bm{\gamma}^{x^{-1}})+\big(\tau_d+\lambda_{d-1}(\bm{e}_m,\bm{\gamma}^{x_\tau^{-1}})\big)\varepsilon_{d+1,0}\\
            &+\tau_{d+1}\varepsilon_{d+2,0}+\gamma_m\lambda_{d-1}(\bm{e}_m,\bm{\tau}).\qedhere
      \end{align*}
\end{proof}

If~\eqref{enu:C1} holds, then define $u$ to be the largest in $\{d,\ldots,m-1\}$ such that $\varepsilon_{u,0}=1$, that is,
\begin{equation}\label{eq:u}
      u=\max\big\{i \mid i\in \{d,\ldots,m-1\},\ \varepsilon_{i,0}=1\big\}.
\end{equation}

\begin{lemma}\label{lm:expressions of f_t}
      Suppose that $\eqref{enu:C1}$ and $\eqref{enu:C2}$ hold, let $f_t$ be as in ~\eqref{eq:regular_f_i}, let $u$ be as in~\eqref{eq:u}, and let $\bm{\alpha}=(\alpha_1,\ldots,\alpha_m)$. Regarding $f_t(\bm{\alpha},\bm{\tau})$ as an element of the algebra $\FF_2[\alpha_1,\ldots,\alpha_m,\tau_2,\ldots,\tau_{d+1}]$, we have the following:
      \begin{enumerate}[\rm(a)]
            \item \label{enu:f_t1} for each $t\in\{1,\ldots,m-u\}$, 
            \[
                  f_t(\bm{\alpha},\bm{\tau})\in \alpha_{u+t}+\FF_2[\alpha_{d+1},\ldots,\alpha_{u+t-1}];
            \]
            \item \label{enu:f_t2} for each $t\in\{m-u+1,\ldots,d-d'+1\}$,
            \[
                  f_t(\bm{\alpha},\bm{\tau})\in \alpha_{t-m+u}+ \FF_2[\alpha_1,\ldots,\alpha_{t-m+u-1},\alpha_{d+1},\ldots,\alpha_m,\tau_2,\ldots,\tau_{t-m+u}];
            \]
            \item \label{enu:f_t3} for each $t\in\{-d'+1,\ldots,0\}$, 
            \[
                  f_t(\bm{\alpha},\bm{\tau})\in \alpha_{d+t}+\FF_2[\alpha_1,\ldots,\alpha_{d'},\alpha_{d+1+t},\ldots,\alpha_m,\tau_2,\ldots,\tau_{d+1}].
            \]
      \end{enumerate}
\end{lemma}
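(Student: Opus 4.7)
The plan is to prove all three parts by induction on $|t|$, feeding the one-step coordinate formulas of Lemmas~\ref{lm:x coordinate form} and~\ref{lm:x^-1 coordinate form} into the identity $f_t(\bm{\alpha},\bm{\tau}) = f_0(\bm{\alpha}^{x_\tau^{-t}},\bm{\tau})$. The organising principle is that $f_0(\bm{\beta},\bm{\tau}) = \sum_{i=d}^{m-1}\beta_i\varepsilon_{i,0}+\beta_m\tau_{d+1}$ only reads the ``upper block'' coordinates $\beta_d,\ldots,\beta_m$ of its input, so it suffices to track these $m-d+1$ coordinates of $\bm{\alpha}^{x_\tau^{-t}}$ as $t$ varies.

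The pivotal structural observation is that $x_\tau^{-1}$ acts on the upper block essentially as a cyclic shift. Reading Lemma~\ref{lm:x^-1 coordinate form} coordinate by coordinate, the correction term $\alpha_1\cdot(\cdots)$ is supported only in positions $1,\ldots,d$, whence $(\bm{\beta}^{x_\tau^{-1}})_i = \beta_{i+1}$ for every $i\in\{d+1,\ldots,m-1\}$. Lemma~\ref{lm:C2} (via the vanishing of $\lambda_d$) extends this to $(\bm{\beta}^{x_\tau^{-1}})_d = \beta_{d+1}$, while $(\bm{\beta}^{x_\tau^{-1}})_m = \beta_1$ is the wrap-around that ultimately brings low-index coordinates and the $\tau_j$ into play. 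Symmetric shift formulas for $x_\tau$ come from Lemma~\ref{lm:x coordinate form}.

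For part~\eqref{enu:f_t1}, the constraint $t\le m-u$ keeps every index $i\in\{d,\ldots,u\}$ inside this ``easy'' range, so iterating the shift gives $(\bm{\alpha}^{x_\tau^{-t}})_i = \alpha_{i+t}$. The contribution of these positions to $f_0(\bm{\alpha}^{x_\tau^{-t}},\bm{\tau})$ is $\sum_{i=d}^{u}\alpha_{i+t}\varepsilon_{i,0}$, whose unique summand with $\varepsilon_{u,0}=1$ (by the definition of $u$ in~\eqref{eq:u}) produces the leading $\alpha_{u+t}$, while the others lie in $\FF_2[\alpha_{d+1},\ldots,\alpha_{u+t-1}]$; indices $i>u$ drop out since $\varepsilon_{i,0}=0$ there. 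Part~\eqref{enu:f_t2} extends the same induction past $t=m-u$: now $u+t>m$ and the leading $\alpha_{t-m+u}$ is recovered through the wrap-around at position $m$, with the new variables $\alpha_1,\ldots,\alpha_{t-m+u-1}$ and $\tau_2,\ldots,\tau_{t-m+u}$ entering exactly via the non-trivial formula $(\bm{\beta}^{x_\tau^{-1}})_1 = \beta_2+\beta_1\tau_2$ and its analogs at positions $2,\ldots,d'$ from Lemma~\ref{lm:x^-1 coordinate form}. Part~\eqref{enu:f_t3} is the mirror image for $t\le 0$: iterate $x_\tau$ via Lemma~\ref{lm:x coordinate form}, using the symmetric downward shift to place $\alpha_{d+t}$ at position $d$ of $\bm{\alpha}^{x_\tau^{-t}}$ after $|t|$ steps, with the middle-band corrections contributing precisely the permitted $\alpha_1,\ldots,\alpha_{d'}$ and $\tau_j$ variables.

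The main obstacle will be the bookkeeping of corrections in the middle band $\{d'+1,\ldots,d-1\}$: each iteration of $x_\tau^{-1}$ (or $x_\tau$) introduces $\alpha_1$-multiplied (resp.\ $\alpha_m$-multiplied) convolution terms $\lambda_\ell$ at those positions, and although such positions are invisible to $f_0$ at the current step, they can migrate into feeding positions under further iteration. Lemma~\ref{lm:properties}\eqref{enu:algebra}, which pins down exactly which variables occur in each $\lambda_\ell$, is the key tool for confining these cumulative corrections to the polynomial algebras specified in (a)--(c), and is precisely what makes the induction close.
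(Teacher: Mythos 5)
Your plan is essentially the paper's own proof: the paper also proves each part by induction on $t$, substituting the one-step coordinate formulas of Lemmas~\ref{lm:x coordinate form} and~\ref{lm:x^-1 coordinate form} (recorded there as explicit componentwise expressions for $\bm{\alpha}^{x_\tau}$ and $\bm{\alpha}^{x_\tau^{-1}}$) into the polynomial form of $f_t$, using Lemma~\ref{lm:properties}\eqref{enu:algebra} to confine the $\lambda_\ell$ corrections and the maximality of $u$ to extract the leading variable. One detail in your sketch is stated incorrectly and actually matters for part~\eqref{enu:f_t1}: writing $\bm{\gamma}=\bm{\alpha}^{x_\tau^{-1}}$, Lemma~\ref{lm:C2} gives $\gamma_d=\alpha_{d+1}+\alpha_1\tau_{d+1}$, not the clean shift $\gamma_d=\alpha_{d+1}$ you assert; the residual term $\alpha_1\tau_{d+1}$ does not lie in $\FF_2[\alpha_{d+1},\ldots,\alpha_u]$, and part~\eqref{enu:f_t1} only survives because (using $\varepsilon_{d,0}=1$ from~\eqref{enu:C1}) this term cancels against the contribution $\gamma_m\tau_{d+1}=\alpha_1\tau_{d+1}$ coming from the last summand of $f_0$ --- a summand your part~(a) accounting omits entirely, since it is not one of the ``indices $i>u$'' that vanish via $\varepsilon_{i,0}=0$. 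Once this cancellation is observed at the base case $t=1$ (as in the paper's computation of $f_1$), the inductive step inherits it and the rest of your bookkeeping goes through as described.
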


\begin{proof}
      Let $\bm{\beta}=(\beta_1,\ldots,\beta_m)=\bm{\alpha}^x$ and $\bm{\gamma}=(\gamma_1,\ldots,\gamma_m)=\bm{\alpha}^{x^{-1}}$. In the following, we take subscripts modulo $m$ for convenience. It follows from Lemmas~\ref{lm:x coordinate form} and~\ref{lm:x^-1 coordinate form} that 
      \begin{align}
            \beta_i
            &=\begin{cases}
                  \alpha_{i-1}& \ \text{ if }\ i\in \{1\}\cup \{d+2,\ldots,m\}\\
                  \alpha_{i-1}+\alpha_m\tau_i& \ \text{ if }\ i\in \{2,\ldots,d'+1\}\\
                  \alpha_{i-1}+\alpha_m(\tau_i+\lambda_{i-1}(\bm{e}_m,\bm{\alpha}))& \ \text{ if }\ i\in \{d'+2,\ldots,d+1\},
            \end{cases}\label{eq:[i]x}\\
            \gamma_i
            &=\begin{cases}
                  \alpha_{i+1}& \ \text{ if }\ i\in \{d+1,\ldots,m\}\\
                  \alpha_{i+1}+\alpha_1\tau_{i+1}& \ \text{ if }\ i\in \{1,\ldots,d'\}\\
                  \alpha_{i+1}+\alpha_1(\tau_{i+1}+\lambda_i(\bm{e}_m,\bm{\alpha}^{\phi^{-1}}+\bm{\tau}^{\phi^{-1}}))& \ \text{ if }\ i\in \{d'+1,\ldots,d\}.
            \end{cases}\label{eq:[i]x-1}
      \end{align}
      Moreover, we see from Lemma~\ref{lm:properties}\eqref{enu:algebra} that 
      \begin{align}
            &\lambda_{i-1}(\bm{e}_m,\bm{\alpha})\in \FF_2[\alpha_1,\ldots,\alpha_{d'}]\  \text{ for all }\ i\in \{d'+2\ldots,d+1\},\label{eq:lambda_range2}\\
            &\lambda_i(\bm{e}_m,\bm{\alpha}^{\phi^{-1}}+\bm{\tau}^{\phi^{-1}})\in \FF_2[\alpha_2,\ldots,\alpha_{i+1-d'},\tau_2,\ldots,\tau_{i+1-d'}]\  \text{ for all }\ i\in \{d'+1\ldots,d\}.\label{eq:lambda_range1}
      \end{align}
      We use induction on $t$ to prove each part the lemma.

      \textsf{Part~\eqref{enu:f_t1}.}
      Recall from~\eqref{eq:regular_f_i} that
      \[
            f_1(\bm{\alpha},\bm{\tau})
            =f_0(\bm{\gamma},\bm{\tau})
            =\sum_{i=d}^{m-1}\gamma_i\varepsilon_{i,0}+\gamma_m\tau_{d+1}.
      \]
      Then it is a straightforward calculation from~\eqref{eq:[i]x-1} and Lemma~\ref{lm:C2} that 
      \begin{align}
            f_1(\bm{\alpha},\bm{\tau})
            &=\sum_{i=d}^u\alpha_{i+1}\varepsilon_{i,0}+\alpha_1\big(\tau_{d+1}+\lambda_d(\bm{e}_m,\bm{\alpha}^{\phi^{-1}}+\bm{\tau}^{\phi^{-1}})\big)+\alpha_1\tau_{d+1}\nonumber\\
            &=\sum_{i=d}^u\alpha_{i+1}\varepsilon_{i,0}
            =\alpha_{u+1}+\sum_{i=d}^{u-1}\alpha_{i+1}\varepsilon_{i,0}
            \in\alpha_{u+1}+\FF_2[\alpha_{d+1},\ldots,\alpha_u].\label{eq:f_1}
      \end{align}
      Hence, the conclusion for $t=1$ holds. Suppose inductively that for $t\in \{1,\ldots,m-u-1\}$, we have 
      $f_t(\bm{\alpha})\in \alpha_{u+t}+\FF_2[\alpha_{d+1},\ldots,\alpha_{u+t-1}]$. Then it follows from~\eqref{eq:[i]x-1} that 
      \begin{align*}
            f_{t+1}(\bm{\alpha},\bm{\tau})
            =f_t(\bm{\gamma},\bm{\tau})
            &\in \gamma_{u+t}+\FF_2[\gamma_{d+1},\ldots,\gamma_{u+t-1}]\\
            &=\alpha_{u+t+1}+\FF_2[\alpha_{d+2},\ldots,\alpha_{u+t}]
            \subseteq \alpha_{u+t+1}+\FF_2[\alpha_{d+1},\ldots,\alpha_{u+t}].
      \end{align*}
      This completes the induction.

      \textsf{Part~\eqref{enu:f_t2}.}
      We see from part~\eqref{enu:f_t1} that $f_{m-u}(\bm{\alpha},\bm{\tau})\in \alpha_m+\FF_2[\alpha_{d+1},\ldots,\alpha_{m-1}]$. Then it follows from~\eqref{eq:[i]x-1} that 
      \begin{align*}
            f_{m-u+1}(\bm{\alpha},\bm{\tau})
            =f_{m-u}(\bm{\gamma},\bm{\tau})
            &\in \gamma_m+\FF_2[\gamma_{d+1},\ldots,\gamma_{m-1}]\\
            &=\alpha_1+\FF_2[\alpha_{d+2},\ldots,\alpha_m]
            \subseteq \alpha_1+\FF_2[\alpha_{d+1},\ldots,\alpha_m].
      \end{align*}
      This shows that the conclusion holds for $t=m-u+1$. Suppose inductively that for some $t\in\{m-u+1,\ldots,d-d'+1\}$, we have 
      \[
            f_t(\bm{\alpha},\bm{\tau})\in \alpha_{t-m+u}+ \FF_2[\alpha_1,\ldots,\alpha_{t-m+u-1},\alpha_{d+1},\ldots,\alpha_m,\tau_2,\ldots,\tau_{t-m+u}].
      \]
      Then we deduce from~\eqref{eq:[i]x-1} and~\eqref{eq:lambda_range1} that
      \begin{align*}
            f_{t+1}(\bm{\alpha},\bm{\tau})
            =f_t(\bm{\gamma},\bm{\tau})
            &\in \gamma_{t-m+u}+\FF_2[\gamma_1,\ldots,\gamma_{t-m+u-1},\gamma_{d+1},\ldots,\gamma_m,\tau_2,\ldots,\tau_{t-m+u}]\\
            &\subseteq \alpha_{t-m+u+1}+\FF_2[\alpha_1,\ldots,\alpha_{t-m+u},\alpha_{d+1},\ldots,\alpha_m,\tau_2,\ldots,\tau_{t-m+u+1}],
      \end{align*}
      completing the induction.
      
      \textsf{Part~\eqref{enu:f_t3}.}
      We see from~\eqref{enu:C1} and~\eqref{eq:regular_f_i} that the conclusion holds for $t=1$ as 
      \[
            f_0(\bm{\alpha},\bm{\tau})=\alpha_d+\alpha_m\tau_{d+1}+\sum_{i=d+1}^{m-1}\alpha_i\varepsilon_{i,0}\in \alpha_d+\FF_2[\alpha_1,\ldots,\alpha_{d'},\alpha_{d+1},\ldots,\alpha_m,\tau_2,\ldots,\tau_{d+1}].
      \]
      Similarly, suppose inductively that $ f_t(\bm{\alpha},\bm{\tau})\in \alpha_{d+t}+\FF_2[\alpha_1,\ldots,\alpha_{d'},\alpha_{d+1+t},\ldots,\alpha_m,\tau_2,\ldots,\tau_{d+1}]$ for some $t\in\{-d'+2,\ldots,0\}$. Then we derive from~\eqref{eq:[i]x} and~\eqref{eq:lambda_range2} that
      \begin{align*}
            f_{t-1}(\bm{\alpha},\bm{\tau})
            =f_t(\bm{\beta},\bm{\tau})
            &\in \beta_{d+t}+\FF_2[\beta_1,\ldots,\beta_{d'},\beta_{d+1+t},\ldots,\beta_m,\tau_2,\ldots,\tau_{d+1}]\\
            &\subseteq \alpha_{d+t-1}+\FF_2[\alpha_1,\ldots,\alpha_{d'},\alpha_{d+t},\ldots,\alpha_m,\tau_2,\ldots,\tau_{d+1}],
      \end{align*}
      which completes the induction.
\end{proof}

\begin{lemma}\label{lm:findGamma}
      Suppose that $\eqref{enu:C1}$ and $\eqref{enu:C2}$ hold. Let $3d>2m$, and let $f_t$ be as in ~\eqref{eq:regular_f_i}. Then for each $(\tau_2,\ldots,\tau_{d-4})\in\FF_2^{d-5}$, there exists $\bm{\gamma}\in \FF_2^m$ with $\gamma_1=\gamma_m=0$ such that $f_{d-d'+1}(\bm{\gamma},\bm{\tau})=1$ for each $(\tau_{d-3},\ldots,\tau_{d+1})\in\FF_2^5$.
\end{lemma}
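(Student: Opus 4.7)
The plan is to apply Lemma~\ref{lm:expressions of f_t}\eqref{enu:f_t2} with $t = d-d'+1$, which gives
\[
f_{d-d'+1}(\bm{\gamma},\bm{\tau}) \;=\; \gamma_{u-2d'+1} + P,
\]
where $P \in \FF_2[\gamma_1,\ldots,\gamma_{u-2d'},\gamma_{d+1},\ldots,\gamma_m,\tau_2,\ldots,\tau_{u-2d'+1}]$. Because $3d>2m$ forces $u-2d'+1 \geq 3d-2m+1 \geq 2$, and $u\leq m-1$ together with $d'\geq 2$ forces $u-2d'+1 \leq d-d' \leq d-2$, the index $u-2d'+1$ lies in $\{2,\ldots,d-d'\}$, so $\gamma_{u-2d'+1}$ is not constrained by $\gamma_1=\gamma_m=0$ and appears with coefficient $1$. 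The key observation is that every $\tau_j$ occurring in $f_{d-d'+1}$ has index $j\leq u-2d'+1 \leq d-d' \leq d-2$, hence among $\tau_{d-3},\ldots,\tau_{d+1}$ only $\tau_{d-3}$ and $\tau_{d-2}$ can possibly appear, and the latter only when $d'=2$.

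I first treat the generic case $u-2d'+1 \leq d-4$, which covers all situations with $d'\geq 4$ (since then $d-d'\leq d-4$) as well as the sub-case $d'=3$ with $u\in\{d,d+1\}$. Here no $\tau_j$ with $j\in\{d-3,\ldots,d+1\}$ occurs in $P$, so after setting $\gamma_i:=0$ for every $i\ne u-2d'+1$ (which respects $\gamma_1=\gamma_m=0$), the polynomial collapses to $\gamma_{u-2d'+1}+c$ for some constant $c\in\FF_2$ determined by the fixed $(\tau_2,\ldots,\tau_{d-4})$. Choosing $\gamma_{u-2d'+1}:=1+c$ then delivers $f_{d-d'+1}(\bm{\gamma},\bm{\tau})=1$ for every $(\tau_{d-3},\ldots,\tau_{d+1})\in\FF_2^5$.

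The remaining case is $d'\in\{2,3\}$ with $u-2d'+1\in\{d-3,d-2\}$. Here I keep the remaining $\gamma_i$ for $i\in\{2,\ldots,m-1\}\setminus\{u-2d'+1\}$ as free parameters (at least $m-3\geq 6$ of them, since $m\geq 9$) and impose the at most three linear conditions that the coefficients of $\tau_{d-3}$, $\tau_{d-2}$, and $\tau_{d-3}\tau_{d-2}$ in $P$ vanish. Once these coefficients are killed, $f_{d-d'+1}(\bm{\gamma},\bm{\tau})$ again becomes a constant in $(\tau_{d-3},\ldots,\tau_{d+1})$, and a final choice of $\gamma_{u-2d'+1}$ adjusts that constant to $1$.

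The hard part will be verifying solvability of the system in this last case. Unfolding $f_{d-d'+1}(\bm{\gamma},\bm{\tau}) = f_0(\bm{\gamma}^{x_\tau^{-(d-d'+1)}},\bm{\tau})$ via iterated application of Lemma~\ref{lm:x^-1 coordinate form} requires careful bookkeeping of the nonlinear terms $\lambda_\ell(\bm{e}_m,\bm{\alpha}^{\phi^{-1}}+\bm{\tau}^{\phi^{-1}})$ as they propagate through $d-d'+1$ iterations, and one must exhibit distinct $\gamma_i$'s governing the coefficients of $\tau_{d-3}$ and $\tau_{d-2}$ so that the at-most-three equations are simultaneously solvable. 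Conditions~\eqref{enu:C1} and~\eqref{enu:C2} are expected to play a decisive role here, forcing these coefficients to be affine in the free $\gamma_i$'s and leaving the corresponding linear system underdetermined.
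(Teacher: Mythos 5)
Your generic case ($u-2d'+1\leq d-4$) is correct and is in spirit the same computation as the paper's, but your argument has a genuine gap in the residual case $d'\in\{2,3\}$ with $u-2d'+1\in\{d-3,d-2\}$, and this is not a marginal case: for $d'=2$ one always has $u-2d'+1=u-3\in\{d-3,d-2\}$ (since $u\in\{d,d+1\}$), so every tightly concentric group with $d'=2$ falls into it. There you only \emph{propose} to kill the coefficients of $\tau_{d-3}$, $\tau_{d-2}$ and $\tau_{d-3}\tau_{d-2}$ in $P$ by imposing conditions on the free $\gamma_i$'s, and you explicitly defer the verification that these conditions are affine in the $\gamma_i$'s and simultaneously solvable. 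Nothing in Lemma~\ref{lm:expressions of f_t}\eqref{enu:f_t2} guarantees this: a priori one of those coefficients could be the constant $1$, in which case no choice of $\bm{\gamma}$ works, and establishing the contrary would require exactly the ``careful bookkeeping'' through $d-d'+1$ iterations of Lemma~\ref{lm:x^-1 coordinate form} that you acknowledge you have not done. As it stands the proof is incomplete.

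The paper sidesteps this case entirely with a shift trick you may want to adopt. When $u-2d'+1\geq 3$, it takes $\bm{\gamma}=(0,0,\gamma_3,\ldots,\gamma_m)$; since the first two coordinates vanish, Lemma~\ref{lm:x^-1 coordinate form} shows $\bm{\gamma}^{x_\tau^{-2}}=(\gamma_3,\ldots,\gamma_m,0,0)$ is a pure coordinate shift, independent of $\bm{\tau}$. Writing $f_{d-d'+1}(\bm{\gamma},\bm{\tau})=f_{d-d'-1}(\bm{\gamma}^{x_\tau^{-2}},\bm{\tau})$ and applying Lemma~\ref{lm:expressions of f_t}\eqref{enu:f_t2} at $t=d-d'-1$ (legitimate because $u-2d'+1\geq3$ gives $d-d'-1\geq m-u+1$) caps the $\tau$-indices at $u-2d'-1\leq d-d'-2\leq d-4$, so the five coordinates $\tau_{d-3},\ldots,\tau_{d+1}$ never enter and the argument reduces to your easy case. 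The only leftover situation, $u-2d'+1=2$ (forcing $d-2d'=1$ and $u=d$), involves only $\tau_2$ and is handled directly. Replacing your residual case with this two-step reduction closes the gap.
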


\begin{proof}
      Let $u=\max\big\{i \mid i\in\{d,\ldots,m-1\},\ \varepsilon_{i,0}=1\big\}$ as defined in~\eqref{eq:u}. To prove the lemma, fix an arbitrary $(\tau_2,\ldots,\tau_{d-4})\in\FF_2^{d-5}$.
      
      Assume first that $u-2d'+1\geq 3$. Let $\bm{\alpha}=(0,0,\alpha_3,\ldots,\alpha_m)\in\FF_2^m$. Then we derive from Lemma~\ref{lm:x^-1 coordinate form} that 
      \begin{equation}\label{eq:20}
            \bm{\alpha}^{x_\tau^{-2}}=(\alpha_3,\ldots,\alpha_m,0,0).
      \end{equation}
      Since $u-2d'+1\geq 3$ implies that $d-d'-1\geq m-u+1$, we deduce from Lemma~\ref{lm:expressions of f_t}\eqref{enu:f_t2} that 
      \[
            f_{d-d'-1}(\bm{\alpha},\bm{\tau})\in \alpha_{u-2d'-1}+\FF_2[\alpha_1,\ldots,\alpha_{u-2d'-2},\alpha_{d+1},\ldots,\alpha_m,\tau_2,\ldots,\tau_{u-2d'-1}].
      \]
      This together with~\eqref{eq:regular_f_i} and~\eqref{eq:20} implies that 
      \begin{align}
            f_{d-d'+1}(\bm{\alpha},\bm{\tau})
            &=f_{d-d'-1}(\bm{\alpha}^{x_\tau^{-2}},\bm{\tau})\nonumber\\
            &\in \alpha_{u-2d'+1}+\FF_2[\alpha_3,\ldots,\alpha_{u-2d'},\alpha_{d+3},\ldots,\alpha_m,\tau_2,\ldots,\tau_{u-2d'-1}].\label{eq:21}
      \end{align}
      Since $d'\geq 2$, we have
      \[
            u-2d'-1\leq m-1-2d'-1=d-d'-2\leq d-4.
      \]
      Therefore, noting that $u-2d'+1\geq 3$, we derive from~\eqref{eq:21} that for each 
      \[
            (\alpha_3,\ldots,\alpha_{u-2d'},\alpha_{u-2d'+2},\ldots,\alpha_m)\in\FF_2^{m-3},
      \]
      there exists $\alpha_{u-2d'+1}\in \FF_2$ such that $f_{d-d'+1}(\bm{\alpha},\bm{\tau})=1$ for each  $(\tau_{d-3},\ldots,\tau_{d+1})\in \FF_2^5$. As a consequence, there exists $\bm{\gamma}\in \FF_2^m$ with $\gamma_1=\gamma_m=0$ such that $f_{d-d'+1}(\bm{\gamma},\bm{\tau})=1$ for each $(\tau_{d-3},\ldots,\tau_{d+1})\in\FF_2^5$.
      
      Assume next that $u-2d'+1< 3$. Recall that $u\geq d$ and by $3d>2m$ that $d>2d'$. It follows that $u-2d'+1\geq d-2d'+1\geq 2$, which forces $d-2d'=1$ and $u=d$. In this case, letting $\bm{\alpha}=(\alpha_1,\ldots,\alpha_m)\in\FF_2^m$, we obtain from Lemma~\ref{lm:expressions of f_t}\eqref{enu:f_t2} that 
      \[
            f_{d-d'+1}(\bm{\alpha},\bm{\tau})\in\alpha_2+\FF_2[\alpha_1,\alpha_{d+1},\ldots,\alpha_m,\tau_2].
      \]
      Hence, for each $(\alpha_1,\alpha_3,\alpha_4\ldots,\alpha_m)\in\FF_2^{m-1}$, there exists $\alpha_2\in \FF_2$ such that $f_{d-d'+1}(\bm{\alpha},\bm{\tau})=1$ for each  $(\tau_{d-3},\ldots,\tau_{d+1})\in \FF_2^5$. Consequently, for each $(\alpha_1,\alpha_3,\alpha_4\ldots,\alpha_m)\in\FF_2^{m-1}$, there exists $\bm{\gamma}\in \FF_2^m$ with $\gamma_1=\gamma_m=0$ such that $f_{d-d'+1}(\bm{\gamma},\bm{\tau})=1$ for each $(\tau_{d-3},\ldots,\tau_{d+1})\in\FF_2^5$. This completes the proof.
\end{proof}

\begin{lemma}\label{lm:number of solutoins}
      Suppose that $\eqref{enu:C1}$ and $\eqref{enu:C2}$ hold, let $\tau\in a_1H_{2,d+1}$ be fixed, and let $f_t$ be as in~\eqref{eq:regular_f_i}. Then for each $\{0,1\}$-sequence $(\gamma_t)_{t=-d'+1}^{d-d'}$, there are at most $2^{d'-2}$  common solutions $\bm{\alpha}\in \FF_2^m$ to the equations 
      \begin{equation}\label{eq:system}
            f_t(\bm{\alpha})=\gamma_t\ \text{ for } \  t\in \{-d'+1,\ldots,d-d'\}.
      \end{equation}
\end{lemma}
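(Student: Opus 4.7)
The plan is to exploit the triangular structure established in Lemma~\ref{lm:expressions of f_t}. Let $u$ be as in~\eqref{eq:u}. I split the index set $\{-d'+1,\ldots,d-d'\}$ into the three blocks matching parts~(a), (b), (c) of that lemma, and then solve the corresponding equations in a carefully chosen order so that each equation uniquely determines one new coordinate of $\bm{\alpha}$ in terms of previously solved ones.

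First, for $t=1,2,\ldots,m-u$ processed in increasing order (using part~(a)), the equation $f_t(\bm{\alpha})=\gamma_t$ reads $\alpha_{u+t}=\gamma_t+p_t(\alpha_{d+1},\ldots,\alpha_{u+t-1},\bm{\tau})$ for some polynomial $p_t$, so it determines $\alpha_{u+t}$. This phase determines $\alpha_{u+1},\ldots,\alpha_m$ as polynomial expressions in the remaining coordinates $\alpha_{d+1},\ldots,\alpha_u$ and $\bm{\tau}$. Next, for $t=m-u+1,\ldots,d-d'$ in increasing order (using part~(b)), I determine $\alpha_1,\ldots,\alpha_{u-2d'}$ by back-substituting. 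Finally, for $t=0,-1,\ldots,-d'+1$ processed in \emph{decreasing} order of $t$ (using part~(c)), I determine $\alpha_d,\alpha_{d-1},\ldots,\alpha_{d-d'+1}$. After all three phases, the $d$ coordinates with indices in $\{1,\ldots,u-2d'\}\cup\{d-d'+1,\ldots,d\}\cup\{u+1,\ldots,m\}$ are uniquely pinned down in terms of the remaining $d'$ free coordinates indexed by $\{u-2d'+1,\ldots,d-d'\}\cup\{d+1,\ldots,u\}$ together with $\bm{\tau}$, giving the preliminary bound of at most $2^{d'}$ common solutions.

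To refine this to the required $2^{d'-2}$, I would then examine the quadratic cross-terms arising from the $\lambda_\ell$ contributions in $f_t$ (see Lemmas~\ref{lm:x coordinate form} and~\ref{lm:x^-1 coordinate form}): when the triangular expressions for the determined coordinates are substituted back, one expects to extract two genuinely nonlinear relations among the free variables, each cutting the solution count by a factor of two. The hard part will be isolating precisely which pair of equations supplies these extra constraints and verifying that they are algebraically independent of the triangular reductions; this requires a delicate book-keeping of the coefficients $\varepsilon_{i,j}$ and $\tau_i$, and is technically the heaviest part of the argument.
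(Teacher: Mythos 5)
Your triangular elimination phase is correct and is exactly the mechanism of the paper's proof: using Lemma~\ref{lm:expressions of f_t}, the $d$ equations of~\eqref{eq:system} pin down the $d$ coordinates indexed by $\{1,\ldots,u-2d'\}\cup\{d-d'+1,\ldots,d\}\cup\{u+1,\ldots,m\}$ in terms of the remaining $d'$ coordinates, giving at most $2^{d'}$ common solutions. The genuine gap is the final step: you assert, without argument, that two further ``nonlinear relations'' among the free variables can be extracted to reach $2^{d'-2}$. No such relations exist, and no book-keeping of the $\varepsilon_{i,j}$ will produce them: the map $\bm{\alpha}\mapsto\big(f_t(\bm{\alpha})\big)_{t=-d'+1}^{d-d'}$ sends $\FF_2^m$ to $\FF_2^{d}$, its fibers partition a set of size $2^{m}=2^{d+d'}$ into at most $2^{d}$ classes, so some sequence $(\gamma_t)_{t=-d'+1}^{d-d'}$ admits at least $2^{d'}$ common solutions. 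Hence the bound $2^{d'-2}$ cannot hold for every sequence, and your own upper bound $2^{d'}$ shows that in fact every fiber has exactly $2^{d'}$ elements. Each of the $d$ equations is already ``used up'' determining its pivot coordinate; there are no leftover constraints to harvest.

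For comparison, the paper reaches $2^{d'-2}$ by enlarging the determined index ranges to $\{u,\ldots,m\}$ and $\{1,\ldots,u-2d'+1\}$, i.e.\ by also treating $\alpha_u$ and $\alpha_{u-2d'+1}$ as pinned down. The first of these rests on reading the pivot of $f_t$ in Lemma~\ref{lm:expressions of f_t}(a) as $\alpha_{u+t-1}$ rather than $\alpha_{u+t}$ (the lemma, and a direct computation of $f_1$, give $\alpha_{u+t}$), and the second invokes the equation $f_{d-d'+1}(\bm{\alpha})=\gamma_{d-d'+1}$, which is not among the equations of~\eqref{eq:system}. So the factor $2^{-2}$ is not actually available there either, and $2^{d'}$ is the bound this method genuinely delivers. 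Fortunately that is all the application requires: in the proof of Proposition~\ref{prop:exwr} the lemma is used only through the inequality $2^{m-\ell}\geq 2^{m/2}>2^{d'-2}$, and since $m\geq 3d'$ with $d'\geq 2$ one still has $2^{m/2}>2^{d'}$. The correct course for you is to restate the conclusion with the bound $2^{d'}$, stop after your triangular phase, and note that the downstream argument is unaffected.
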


\begin{proof}
      Let $u$ be as defined in~\eqref{eq:u} and $\bm{\alpha}=(\alpha_1,\ldots,\alpha_m)$ be a common solution to~\eqref{eq:system}.
      Then, by Lemma~\ref{lm:expressions of f_t}\eqref{enu:f_t1}, we obtain that for each $t\in \{1,\ldots, m-u+1\}$,
      \[
            \alpha_{t+u-1}\in f_t(\bm{\alpha})+ \FF_2[\alpha_{d+1},\ldots,\alpha_{t+u-2}]=\gamma_t+\FF_2[\alpha_{d+1},\ldots,\alpha_{t+u-2}]=\FF_2[\alpha_{d+1},\ldots,\alpha_{t+u-2}].
      \]
      Similarly, we deduce from Lemma~\ref{lm:expressions of f_t}\eqref{enu:f_t2}\eqref{enu:f_t3} that 
      \begin{align*}
            & \alpha_{t-m+u}\in \FF_2[\alpha_1,\ldots,\alpha_{t-m+u-1},\alpha_{d+1},\ldots,\alpha_m] \ \text{ for each }\ t\in \{m-u+1,\ldots,d-d'+1\},\\
            &\alpha_{d+t}\in \FF_2[\alpha_1,\ldots,\alpha_{d'},\alpha_{d+1+t},\ldots,\alpha_m] \ \ \text{ for each }\ t\in \{-d'+1,\ldots,0\}.
      \end{align*}
      Changing the index, we obtain 
      \begin{align}
            &\alpha_{t}\in \FF_2[\alpha_{d+1},\ldots,\alpha_{t-1}]\ \text{ for each }\ t\in\{u,\ldots,m\},\label{eq:um}\\
            & \alpha_{t}\in \FF_2[\alpha_1,\ldots,\alpha_{t-1},\alpha_{d+1},\ldots,\alpha_m] \ \text{ for each }\ t\in \{1,\ldots,u-2d'+1\},\label{eq:1u-2d'+1}\\
            &\alpha_{t}\in \FF_2[\alpha_1,\ldots,\alpha_{d'},\alpha_{t+1},\ldots,\alpha_m] \ \ \text{ for each }\ t\in \{d-d'+1,\ldots,d\}.\label{eq:d-d'+1d}
      \end{align}

      \vspace{5pt}
      \begin{center}
      
      \begin{tikzpicture}
            \filldraw [ultra thick]
            (0,0) circle (1pt) node [above] {$\alpha_1$} -- 
            (2,0) circle (1pt) node [above] {$\alpha_{u-2d'+1}$};
            \draw (1,0) node [below] {\eqref{eq:1u-2d'+1}};
            \filldraw 
            (2,0) -- 
            (3,0) circle (1pt) node [below] {$\alpha_{u-2d'+2}$} -- 
            (5,0) circle (1pt) node [below] {$\alpha_{d-d'}$} -- 
            (6,0) circle (1pt) node [above] {$\alpha_{d-d'+1}$} ;
            \filldraw [ultra thick]
            (6,0) -- 
            (8,0) circle (1pt) node [above] {$\alpha_d$} ;
            \draw (7,0) node [below] {\eqref{eq:d-d'+1d}};
            \filldraw 
            (8,0)-- 
            (9,0) circle (1pt) node [below] {$\alpha_{d+1}$} -- 
            (11,0) circle (1pt) node [below] {$\alpha_{u-1}$} -- 
            (12,0) circle (1pt) node [above] {$\alpha_u$} ;
            \filldraw [ultra thick]
            (12,0) --
            (14,0)circle (1pt) node [above] {$\alpha_m$};
            \draw (13,0) node [below] {\eqref{eq:um}};
      \end{tikzpicture}
      \end{center}
      Fix arbitrary values for $\alpha_{u-2d'+2},\ldots,\alpha_{d-d'},\alpha_{d+1},\ldots,\alpha_{u-1}$. Then we see from~\eqref{eq:um} that the values of $\alpha_u,\ldots,\alpha_m$ are uniquely determined.  Now that the values of $\alpha_{d+1},\ldots,\alpha_{m}$ are fixed, we derive from~\eqref{eq:1u-2d'+1} that the values of $\alpha_1,\ldots,\alpha_{u-2d'+1}$ are also uniquely determined. Similarly, using~\eqref{eq:d-d'+1d} and the fixed values of $\alpha_1,\ldots,\alpha_{d-d'},\alpha_{d+1},\ldots,\alpha_m$, we can uniquely determine the values of $\alpha_{d-d'+1},\ldots,\alpha_d$. Hence the number of solutions to~\eqref{eq:system} is no more than the number of choices of $\alpha_{u-2d'+2},\ldots,\alpha_{d-d'},\alpha_{d+1},\ldots,\alpha_{u-1}$, which is $2^{d'-2}$.
\end{proof}

\section{Primitivity}\label{sec:primitive}

Throughout this section, all notation follows the definitions provided in Section~\ref{sec:preliminary}. Let $H$ be an arbitrary concentric group. Our goal is to prove the following proposition.

\begin{proposition}\label{prop:primitive2}
      Suppose that $\eqref{enu:C1}$ holds. Then there exists $\tau_{d+1}\in\FF_2$ such that the group $\langle R(H),x_\tau\rangle$ is primitive on $H$ for each $(\tau_2,\ldots,\tau_d)\in\FF_2^{d-1}$.
\end{proposition}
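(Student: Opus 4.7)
The plan is to translate primitivity into a statement about subgroups of $H$: since $R(H)$ acts regularly on $H$, blocks of $G=\langle R(H), x_\tau\rangle$ containing $1$ correspond bijectively to subgroups $B\leq H$ that are preserved setwise by $G_1=\mathrm{Stab}_G(1)$. Proving primitivity therefore amounts to ruling out all proper nontrivial such $B$.

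The first step is to identify useful elements of $G_1$. Besides $x_\tau$, the element $z_\tau:=R(\tau)x_\tau^{-1}R(a_m)$ fixes $1$ since $1\mapsto\tau\mapsto a_m\mapsto 1$, so $z_\tau\in G_1$; hence $y_\tau=(x_\tau z_\tau)^2$ from~\eqref{eq:y} and the conjugates $x_\tau^{-t}y_\tau x_\tau^t$ ($t\in\Z$) also lie in $G_1$. One will exploit all of these invariance conditions simultaneously.

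The second step is to show that any proper $B$ must be disjoint from $\{a_1,\ldots,a_m\}$. Using Lemmas~\ref{lm:x coordinate form} and~\ref{lm:x^-1 coordinate form} one checks $a_i^{x_\tau}=a_{i+1}$ for $i<m$ and $a_j^{x_\tau^{-1}}=a_{j-1}$ for $j>1$, so if any $a_i$ lies in $B$ then applying $x_\tau$ and $x_\tau^{-1}$ alternately walks through the generator sequence and recovers every $a_k$, whence $B=H$. Consequently a proper $B$ consists entirely of elements whose coordinate forms have Hamming weight at least two.

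The crux, and the main obstacle, is to rule out these remaining candidates via the free parameter $\tau_{d+1}$. A short computation using Lemma~\ref{lm:multiplication} together with $\varepsilon_{d,0}=1$ from~\eqref{enu:C1} shows that $\bm{\tau}^2$ has its $\bm{e}_{d'+1}$-coefficient equal to $\tau_{d+1}$, so $\tau_{d+1}$ directly toggles whether the central generator $a_{d'+1}$ is forced to lie in every $B$ containing $\tau$; moreover, the $x_\tau$-orbit of $\tau$ begins to depend on $\tau_{d+1}$ at its $d'$-th iterate through the correction term in Lemma~\ref{lm:x coordinate form}. The plan is to pick $\tau_{d+1}\in\FF_2$ so that these two effects combined produce an incompatibility with the $z_\tau$- and $y_\tau$-invariance of any hypothetical proper $B$, uniformly in $(\tau_2,\ldots,\tau_d)$. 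The principal technical difficulty is that, without assuming~\eqref{enu:C2}, the clean transposition form of $y_\tau$ in Lemma~\ref{lm:y} is unavailable, so the case analysis must either extend the calculations of Section~\ref{sec:calculation} to this weaker setting, or be carried out entirely through $z_\tau$ and iterates of $x_\tau$.
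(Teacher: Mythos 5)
Your setup---identifying blocks through $1$ with subgroups invariant under the point stabilizer, and showing that a proper block must avoid $\{a_1,\ldots,a_m\}$ because $x_\tau$ walks through the generator sequence---matches the paper's Lemma~\ref{lm:Nsubg}(a),(b). But the crux of the argument is missing, and the direction you sketch for it does not close. You propose to exploit the $z_\tau$- and $y_\tau$-invariance of a hypothetical block $B$ and observe that $\tau^2=a_{d'+1}^{\tau_{d+1}}$ ``toggles whether $a_{d'+1}$ is forced to lie in every $B$ containing $\tau$''; however, nothing forces $\tau$ to lie in $B$, so this toggle has no purchase on an arbitrary block. Likewise, your worry that Lemma~\ref{lm:y} is unavailable without \eqref{enu:C2} is a red herring: the paper's primitivity argument never uses $y_\tau$ at all.

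What the paper actually does is a two-step argument carried out entirely inside $Z(H)$. First (Lemma~\ref{lm:Nsubg}(c)), condition \eqref{enu:C1} gives $[a_1,a_{d+1}]=a_{d'+1}$, and if a block $N$ contained a nontrivial element $a_{k_1}\cdots a_{k_s}$ of $Z(H)=H_{d'+1,d}$, then shifting it by suitable powers of $x_\tau$ produces two elements of $N$ whose commutator equals $[a_1,a_{d+1}]=a_{d'+1}$; since $N$ is a subgroup, this forces $a_{d'+1}\in N$, contradicting (b). Hence $N\cap Z(H)=1$. Second, any nontrivial element of $N$ can be shifted by a power of $x_\tau$ to the form $ha_m$ with $h\in H_{1,m-1}$, and then $\big((ha_m)^{x_\tau}\big)^2\in N\cap Z(H)$ by Lemma~\ref{lm:concentric}\eqref{enu:5concentric}; an explicit computation of its $(d'+1)$-th coordinate gives $\tau_{d+1}+\gamma$, where $\gamma$ is read off from $[a_1,h^\varphi]$ (in the case where the $d$-th coordinate of $[h,a_m]$ vanishes---otherwise $(ha_m)^2$ itself is already a nontrivial element of $N\cap Z(H)$), so a choice of $\tau_{d+1}$ makes this square nontrivial and yields the contradiction. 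This squaring-into-the-centre mechanism and the commutator argument forcing $N\cap Z(H)=1$ are precisely the ideas your proposal lacks; without them the role of $\tau_{d+1}$ remains unmotivated and the proof does not go through.
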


We need the following lemma to prove Proposition~\ref{prop:primitive2}.

\begin{lemma}\label{lm:Nsubg}
      Let $N$ be a block of imprimitivity of $\langle R(H),x_\tau\rangle$ acting on $H$ such that $1\in N$ and $N\neq H$. Then the following hold:
      \begin{enumerate}[\rm(a)]
            \item \label{enu:lm3.1 a} $N$ is a subgroup of $H$ such that $N^{x_\tau}=N$;
            \item \label{enu:lm3.1 b} $N\cap\{a_1,a_2,\ldots, a_m\}= \emptyset$;
            \item \label{enu:lm3.1 d} If $[a_1,a_{d+1}]\in \{a_1,\ldots, a_m\}$, then $N\cap Z(H)=1$.
      \end{enumerate}
\end{lemma}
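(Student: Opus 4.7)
The plan is to prove the three statements in order. Parts (a) and (b) will follow quickly from standard block-theoretic arguments for regular subgroups and from the propagation identity $x_\tau^{-1}R(a_i)x_\tau=R(a_{i+1})$ provided by Lemma~\ref{lm:condition}. Part (c) is the delicate one and will require careful use of the coordinate forms from Section~\ref{sec:preliminary} together with the specific hypothesis on the commutator $[a_1,a_{d+1}]$.

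For (a), the regularity of $R(H)$ together with $1\in N$ forces the setwise stabilizer of the block $N$ in $R(H)$ to act transitively on $N$ with trivial point stabilizer; this stabilizer must then be $R(N_0)$ for a subgroup $N_0\le H$, and $N=1^{R(N_0)}=N_0$ is a subgroup of $H$. Next, $x_\tau$ fixes $1$ directly from the defining formula ($1\in H_{1,m-1}$ and $1^\varphi=1$), so $N^{x_\tau}$ is a block containing $1$; since blocks are pairwise equal or disjoint, $N^{x_\tau}=N$. For (b), assume $a_i\in N$ for some $i$. The invariance $N^{x_\tau}=N$ from (a) means $x_\tau$ normalizes $R(N)$ by conjugation. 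Lemma~\ref{lm:condition} then gives both $x_\tau^{-1}R(a_j)x_\tau=R(a_{j+1})$ for $j\le m-1$ and, inverting, $x_\tau R(a_j)x_\tau^{-1}=R(a_{j-1})$ for $j\ge 2$; iterating in both directions from $a_i$ yields $a_1,\ldots,a_m\in N$, forcing $N=H$ and contradicting $N\neq H$.

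For (c), assume $[a_1,a_{d+1}]=a_k\in\{a_1,\ldots,a_m\}$ (necessarily $k\in\{d'+1,\ldots,d\}$ since $[a_1,a_{d+1}]\in Z(H)$), and suppose for contradiction that some $z\in N\cap Z(H)$ is nontrivial. I would first establish the key identity $x_\tau^{-1}R(z)x_\tau=R(z^{x_\tau})$ for every $z\in Z(H)$ by direct coordinate computation: Lemma~\ref{lm:x coordinate form} gives $z^{x_\tau}$ the coordinate form $\bm z^\phi$ (since $z_m=0$), while Lemma~\ref{lm:multiplication} together with Lemma~\ref{lm:properties}\eqref{enu:algebra} shows that all relevant $\lambda_\ell$-terms vanish because both $\bm z$ and $\bm z^\phi$ have zero first $d'$ coordinates. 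Combined with $N^{x_\tau}=N$, this yields $z^{x_\tau^j}\in N$ for every $j$ in the range where the iterated shift preserves the vanishing of the $m$-th coordinate, populating $N$ with a large family of shifted central elements.

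The hypothesis that $[a_1,a_{d+1}]=a_k$ is a single generator forces exactly one parameter $\varepsilon_{d,\cdot}$ to be nonzero, so Lemma~\ref{lm:e_i,e_j} yields the refined identities $[a_i,a_{d+i}]=a_{k+i-1}$ for each $i\in\{1,\ldots,d'\}$. A suitable $\FF_2$-linear combination of the shifts $z^{x_\tau^j}$ (computed inside the abelian portion of $H$, where the multiplication formula degenerates to coordinate addition by Lemma~\ref{lm:multiplication}) together with the shifted commutator elements should isolate a single generator $a_{k'}\in N$, contradicting~(b). The main obstacle is precisely this final extraction: the $\FF_2$-span of the shifted supports $\{S+j\}$ need not \emph{a priori} contain an indicator vector, and the one-hot pattern of $\varepsilon_{d,\cdot}$ is exactly what is expected to constrain the structure enough to make the extraction possible. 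Executing this cleanly will require a combinatorial analysis of shifted supports together with careful bookkeeping of which intermediate elements remain in the valid range of the iteration.
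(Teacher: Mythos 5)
Your parts (a) and (b) are correct and essentially coincide with the paper's proof: (a) is the standard observation that a block containing the identity of a group admitting the regular subgroup $R(H)$ must be a subgroup, together with $1^{x_\tau}=1$; for (b) the paper argues more directly via $a_i^{x_\tau}=a_i^{\varphi}=a_{i+1}$ for $i\leq m-1$ combined with $N^{x_\tau}=N$, whereas you route through the conjugation relation of Lemma~\ref{lm:condition} (your intermediate assertion that $N^{x_\tau}=N$ makes $x_\tau$ normalize $R(N)$ is not justified as stated, but evaluating the conjugates at the point $1$ recovers the same conclusion, so this is only a stylistic wrinkle).

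Part (c), however, contains a genuine gap that you yourself flag: the ``final extraction'' of a single generator from the $\FF_2$-span of the shifted supports of $z=a_{k_1}\cdots a_{k_s}$ is never carried out, and your worry is well founded --- that span need not contain an indicator vector, and no amount of adding shifted copies of $z$ is guaranteed to produce one. The missing idea is to exploit that $N$ is a \emph{subgroup} and is therefore closed under commutators, rather than only under the additive combinations you consider. Concretely, shift $z$ down by $k_1-1$ steps to obtain $u=a_1a_{k_2-k_1+1}\cdots a_{k_s-k_1+1}\in N$ and up by $d+1-k_s$ steps to obtain $v=a_{k_1-k_s+d+1}\cdots a_{k_{s-1}-k_s+d+1}a_{d+1}\in N$; both iterations are legitimate because every intermediate element stays in $H_{1,m-1}$ (respectively $H_{2,m}$), where $x_\tau$ (respectively $x_\tau^{-1}$) acts as $\varphi$ (respectively $\varphi^{-1}$), and because $N^{x_\tau}=N$. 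Every pair of generators occurring in $u$ and $v$ has index difference strictly less than $d$ except the single pair $(a_1,a_{d+1})$, so by conditions \eqref{enu:R2} and the bilinearity of commutators in Lemma~\ref{lm:concentric} one gets $[u,v]=[a_1,a_{d+1}]\in N$, which by hypothesis is one of the $a_j$ and contradicts part (b). This one commutator replaces the entire combinatorial analysis of shifted supports you anticipate; note also that the refined identities $[a_i,a_{d+i}]=a_{k+i-1}$ you derive are not needed, since only $[a_1,a_{d+1}]$ enters.
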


\begin{proof}
      \rm(a) Note for each $h_1,h_2\in N$ that
      \[
            \{1,h_1,h_2\}^{R(h_2^{-1})}\cap \{1,h_1,h_2\}=\{h_2^{-1}, h_1h_2^{-1},1\}\cap \{1,h_1,h_2\}\neq \emptyset.
      \]
      Since $N$ is a block of $\langle R(H),x_\tau\rangle$, we have $N^{R(h_2^{-1})}=N$, and thus $h_1h_2^{-1}\in N$, which implies that $N$ is a subgroup of $H$. Moreover, since $1\in N^{x_\tau}\cap N$, we conclude that $N^{x_\tau}=N$.
      
      \rm(b) Suppose $N\cap\{a_1,a_2,\ldots, a_m\}\neq \emptyset$. By the definition of $x_\tau$, we have 
      \[a_m=a_{m-1}^{x_\tau}=\dots=a_1^{x_\tau^{m-1}},\]
      which, by part~\eqref{enu:lm3.1 a}, implies that $\{a_1,a_2,\ldots,a_m\}\subseteq N$. Since $N$ is a subgroup of $H$, we conclude that $H=\langle a_1,a_2,\ldots,a_{m}\rangle\subseteq N$, and thus $N=H$, a contradiction.
            
      \rm(c) Suppose that $[a_1,a_{d+1}]\in \{a_1,\ldots, a_m\}$ and $N\cap Z(H)>1$. Then $N$ contains a non-identity element of $Z(H)=H_{d'+1,d}$, say, $a_{k_1}\dots a_{k_s}$, where $d'+1\leq k_1<\dots< k_s\leq d$. Then we derive from part~\eqref{enu:lm3.1 a} that
      \[
            a_1a_{k_2-k_1+1}\dots a_{k_s-k_1+1}=(a_{k_1}a_{k_2}\dots a_{k_s})^{x_\tau^{-k_1+1}}\in N^{x_\tau^{-k_1+1}}=N,
      \]
      \[
            a_{k_1-k_s+d+1}\dots a_{k_{s-1}-k_s+d+1}a_{d+1}=(a_{k_1}\dots a_{k_{s-1}}a_{k_s})^{x_\tau^{-k_s+d+1}}\in N^{x_\tau^{-k_s+d+1}}=N.
      \]
      Noting from Definition~\ref{def:rel} that 
      \begin{align*}
            &[a_{k_2-k_1+1}\dots a_{k_s-k_1+1},a_{k_1-k_s+d+1}\dots a_{k_{s-1}-k_s+d+1}a_{d+1}]=1,\\
            &[a_1a_{k_2-k_1+1}\dots a_{k_s-k_1+1},a_{k_1-k_s+d+1}\dots a_{k_{s-1}-k_s+d+1}]=1,
      \end{align*}
      we obtain 
      \[
            [a_1a_{k_2-k_1+1}\dots a_{k_s-k_1+1},a_{k_1-k_s+d+1}\dots a_{k_{s-1}-k_s+d+1}a_{d+1}]=[a_1,a_{d+1}].
      \]
      It follows that $[a_1,a_{d+1}]\in N$, and so $ N\cap \{a_1,\ldots, a_m\}\neq \emptyset$, contradicting part~\eqref{enu:lm3.1 b}.\qedhere
\end{proof}

\begin{proof}[Proof of Proposition~\ref{prop:primitive2}]
      Suppose for a contradiction that $N$ is a nontrivial block of $\langle R(H),x_\tau\rangle$ acting on $H$ such that $1\in N$. Then Lemma~\ref{lm:Nsubg}\eqref{enu:lm3.1 a} asserts that $N$ is a subgroup of $H$ stabilized by $x_\tau$. By Lemma~\ref{lm:Nsubg}\eqref{enu:lm3.1 d}, since the condition~\eqref{enu:C1} implies that $[a_1,a_{d+1}]\in \{a_1,\ldots,a_m\}$, we only need to show that $N\cap Z(H)>1$.
      
      Take a non-identity element in $N$, say, $a_{k_1}a_{k_2}\dots a_{k_s}$, where $1\leq k_1<\dots<k_s\leq m$. Then
      \[
            a_{k_1-k_s+m}\dots a_{k_{s-1}-k_s+m}a_m=(a_{k_1}\dots a_{k_{s-1}}a_{k_s})^{x_\tau^{-k_s+m}}\in N^{x_\tau^{-k_s+m}}=N.
      \]
      Let $h=a_{k_1-k_s+m}\dots a_{k_{s-1}-k_s+m}$. Then $h\in H_{1,m-1}$ and $ha_m=a_{k_1-k_s+m}\dots a_{k_{s-1}-k_s+m}a_m\in N$. It follows that $((ha_m)^{x_\tau})^2\in N^{x_\tau} N^{x_\tau}=N$. As Lemma~\ref{lm:concentric} indicates that $((ha_m)^{x_\tau})^2\in Z(H)$, we are left to choose a $\tau_{d+1}\in\FF_2$ such that $((ha_m)^{x_\tau})^2$ is non-identity.
      
      It is straightforward to calculate that
      \begin{align*}
            ((ha_m)^{x_\tau})^2
            =((a_mh[h,a_m])^{x_\tau})^2
            &=(\tau h^\varphi [h,a_m]^\varphi)^2\\
            &=\tau^2 (h^\varphi)^2 [\tau,h^\varphi][\tau,[h,a_m]^\varphi]
            =\tau^2 (h^2)^\varphi [\tau,h^\varphi][\tau,[h,a_m]^\varphi].
      \end{align*}
      Now we calculate the $(d'+1)$-th coordinate of $((ha_m)^{x_\tau})^2$.
      Since $\tau=a_1a_2^{\tau_2}\dots a_{d+1}^{\tau_{d+1}}$, we have $\tau^2=[a_1,a_{d+1}]^{\tau_{d+1}}=a_{d'+1}^{\tau_{d+1}}$. By Lemma~\ref{lm:concentric}, we derive that $(h^2)^\varphi\in Z(H)^\varphi=H_{d'+2,d+1}$ and $(h^2)^\varphi=(h^\varphi)^2\in Z(H)=H_{d'+1,d}$, which implies that 
      \begin{equation}\label{eq:(h^2)^varphi}
            (h^2)^\varphi\in H_{d'+2,d+1}\cap H_{d'+1,d}=H_{d'+2,d}.
      \end{equation}
      Moreover, since $h^\varphi\in H_{2,m}$,
      \begin{align*}
            [\tau,h^\varphi]H_{d'+2,d}
            =[a_1a_2^{\tau_2}\dots a_{d+1}^{\tau_{d+1}},h^\varphi]H_{d'+2,d}
            =[a_1,h^\varphi]H_{d'+2,d}
            =a_{d'+1}^{\gamma}H_{d'+2,d}
      \end{align*}
      for some $\gamma\in \FF_2$. Let $\omega\in \FF_2$ be the $d$-th coordinate of $[h,a_m]$. Then $[\tau,[h,a_m]^\varphi]=[a_1,a_{d+1}]^\omega$. Note that
      \[
            (ha_m)^2=ha_mha_m=h^2a_m^2[h,a_m]=h^2[h,a_m].
      \]
      Then since~\eqref{eq:(h^2)^varphi} implies that $h^2\in H_{d'+1,d-1}$, the $d$-th coordinate of $(ha_m)^2$ is the $d$-th coordinate of $[h,a_m]$, which is $\omega$. If $\omega=1$, then $(ha_m)^2$ is a non-identity element in $N\cap Z(H)$, as desired. Now assume that $\omega=0$. Then $[\tau,[h,a_m]^\varphi]=1$, and the $(d'+1)$-th coordinate of $((ha_m)^{x_\tau})^2=\tau^2(h^2)^\varphi[\tau,h^\varphi][\tau,[h,a_m]^\varphi]$ is $\tau_{d+1}+\gamma$. Take $\tau_{d+1}=\gamma+1$. Then the $(d'+1)$-th coordinate of $((ha_m)^{x_\tau})^2$ is $1$, which implies that $((ha_m)^{x_\tau})^2$ is non-identity. This completes the proof.
\end{proof}

\section{Affineness}\label{sec:Affine}

In this section, we adhere to the notation established in Section~\ref{sec:preliminary}. Let $H$ be an arbitrary concentric group. We aim to prove the following proposition.

\begin{proposition}\label{prop:affine}
      Suppose that $3d>2m$, and that $\eqref{enu:C1}$ holds. Then for each $(\tau_2,\ldots,\tau_{d-4},\tau_{d-1},\tau_d,\tau_{d+1})\in\FF_2^{d-2}$ such that 
      \begin{equation}\label{eq:sum}
            \tau_{d'}=1+
            \sum\limits_{k=1}^{d'-1} \Big( \sum\limits_{i=k}^{d'}\sum\limits_{\ell=i}^{d-d'} (i+k+1)\varepsilon_{m-i,\ell-i}\Big)\tau_k,
      \end{equation}
      there exists $(\tau_{d-3},\tau_{d-2})\in\FF_2^2$ such that the group $\langle R(H),x_\tau\rangle$ is not contained in any affine group on $H$.
\end{proposition}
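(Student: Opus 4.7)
The plan is to argue by contradiction. Assume that $G := \langle R(H), x_\tau \rangle$ is contained in some affine group on $H$, so that $G$ admits an elementary abelian regular normal subgroup $V$ of order $2^m$, inducing a vector space structure on $H$ in which every element of $G$ acts as an affine transformation.

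The key elements to analyze are the involutions $y_\tau$, $x_\tau y_\tau x_\tau^{-1}$, and their ``long'' conjugate $z_\tau := x_\tau^{-(d-d'+1)} y_\tau x_\tau^{d-d'+1}$. By Lemmas~\ref{lm:y} and~\ref{lm:regular_y^{x^t}}, the first two act as coordinate flips along $\bm{e}_{2d'}$ and $\bm{e}_{2d'-1}$ exactly on the sets where $f_0(\bm{\alpha}) = 1$ and $f_{-1}(\bm{\alpha}) = 1$ respectively (Corollary~\ref{cor:fpiff}), while $z_\tau$ has a more involved, $\bm{\alpha}$-dependent displacement given by Lemma~\ref{lm:regular_y^{x^t}}~\eqref{eq:regular_y^{x^{2d-m+1}}}. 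Under the affine hypothesis, the fixed sets of $y_\tau$ and $x_\tau y_\tau x_\tau^{-1}$ must be affine hyperplanes in the $V$-structure, and $z_\tau$, being an affine involution itself, must permute these hyperplanes in an affine manner.

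Concretely, I would select $\bm{\gamma} \in \FF_2^m$ with $\gamma_1 = \gamma_m = 0$ and $f_{d-d'+1}(\bm{\gamma}, \bm{\tau}) = 1$, as supplied uniformly in $(\tau_{d-3}, \ldots, \tau_{d+1})$ by Lemma~\ref{lm:findGamma}, so that $z_\tau$ actually moves $\bm{\gamma}$ via the full displacement formula. Then apply Lemma~\ref{lm:f_0,f_1} to compute $f_0(\bm{\gamma}^{z_\tau}, \bm{\tau})$ and $f_{-1}(\bm{\gamma}^{z_\tau}, \bm{\tau})$ in closed form; by Corollary~\ref{cor:fpiff} these values detect whether $\bm{\gamma}^{z_\tau}$ lies in the fixed set of $y_\tau$ and of $x_\tau y_\tau x_\tau^{-1}$. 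Under the affine hypothesis, the ``hyperplane-membership pattern'' of $\bm{\gamma}^{z_\tau}$ is linearly determined by that of $\bm{\gamma}$, forcing a polynomial identity in the $\tau_i$'s once the closed forms are substituted.

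The relation~\eqref{eq:sum} is designed so that this identity collapses, after cancellation of contributions involving $\tau_1, \ldots, \tau_{d-4}, \tau_{d-1}, \tau_d, \tau_{d+1}$, to a nontrivial expression in $\tau_{d-3}$ and $\tau_{d-2}$; one then chooses $(\tau_{d-3}, \tau_{d-2}) \in \FF_2^2$ to violate this residual identity, producing the required contradiction. The main obstacle I anticipate is crystallizing the abstract ``affine-forced'' identity -- essentially, the statement that an affine involution $z_\tau$ maps the fixed hyperplane of another affine involution to a parallel hyperplane -- into the precise coordinate-level relation matching the outputs of Lemma~\ref{lm:f_0,f_1}; the intricate form of~\eqref{eq:sum} suggests that this bookkeeping is the core of the argument.
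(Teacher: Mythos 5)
Your proposal has two concrete problems. First, essentially every tool you invoke --- Lemma~\ref{lm:y}, Lemma~\ref{lm:regular_y^{x^t}}, Corollary~\ref{cor:fpiff}, Lemma~\ref{lm:f_0,f_1} and Lemma~\ref{lm:findGamma} --- carries the hypothesis that \emph{both} \eqref{enu:C1} and \eqref{enu:C2} hold, whereas Proposition~\ref{prop:affine} assumes only $3d>2m$ and \eqref{enu:C1}. So the machinery of the functions $f_t$ and the displacement formula for $z_\tau$ is simply not available under the stated hypotheses; that machinery belongs to the product-action analysis of Section~\ref{sec:PA}, not to the affine case. Second, your central step is not established: you assert that, under the affine hypothesis, ``the hyperplane-membership pattern of $\bm{\gamma}^{z_\tau}$ is linearly determined by that of $\bm{\gamma}$,'' i.e.\ that the affine involution $z_\tau$ carries the fixed hyperplane of $y_\tau$ to a parallel hyperplane. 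An affine map sends $\Fix(y_\tau)$ to $\Fix(z_\tau^{-1}y_\tau z_\tau)$, which is an affine hyperplane but in general meets $\Fix(y_\tau)$ in a codimension-$2$ subspace; nothing forces it to equal $\Fix(y_\tau)$ or its complementary coset. Moreover, the putative affine structure comes from an unknown regular elementary abelian normal subgroup $V$ (note $H$ itself need not be elementary abelian), so these ``hyperplanes'' are affine with respect to a vector-space structure that has no a priori relation to the coordinate forms in which $f_0$, $f_{-1}$ and Lemma~\ref{lm:f_0,f_1} are computed. Without bridging these two structures, no polynomial identity in the $\tau_i$ can be extracted, and the claimed role of \eqref{eq:sum} remains speculative.

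For comparison, the paper's argument is much more elementary: by Lemma~\ref{lm:affine}, a subgroup of $\AGL_m(2)$ cannot contain an element whose number of fixed points is odd and greater than $1$, since fixed-point sets of affine maps are empty or affine subspaces. One then arranges, via the linear conditions \eqref{eq:E1}, \eqref{eq:E2}, \eqref{eq:E3} on $\bm{\tau}$ (Lemmas~\ref{lm:x fixed points} and~\ref{lm:x^2 fixed points}), that $x_\tau$ fixes only the identity while $x_\tau^2$ has more than one fixed point; since $\Fix(x_\tau^2)$ is the union of the $x_\tau$-orbits of length at most $2$, its cardinality is then odd and exceeds $1$. The condition \eqref{eq:sum} is exactly the sum $\eqref{eq:E1}+\eqref{eq:E2}+\eqref{eq:E3}$ (this is where \eqref{enu:C1} enters), and the free parameters $(\tau_{d-3},\tau_{d-2})$ are used to solve \eqref{eq:E2} and \eqref{eq:E3} via an invertible $2\times 2$ system. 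If you want to salvage your line of attack you would need both to remove the dependence on \eqref{enu:C2} and to prove an actual incidence constraint between the fixed sets of commuting or conjugate affine involutions; as written, the argument does not go through.
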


\begin{remark}
      The condition $3d>2m$ guarantees that $d-d'=2d-m>m/3\geq 3$, and thus, $d-3>d'$.
\end{remark}

The approach to proving Proposition~\ref{prop:affine} is based on the following observation.

\begin{lemma}\label{lm:affine}
      Suppose that $\langle R(H),x_\tau\rangle$ contains an element with the number of fixed points odd and greater than $1$. Then $\langle R(H),x_\tau\rangle$ is not a subgroup of any affine group on $H$.
\end{lemma}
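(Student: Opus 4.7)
The plan is a brief contradiction argument that exploits the rigid fixed-point structure of affine maps in characteristic $2$. First I would suppose, toward a contradiction, that $G := \langle R(H), x_\tau \rangle$ is contained in some affine subgroup $A$ of $\Sym(H)$. Since $|H| = 2^m$, after identifying $H$ with $\FF_q^n$ for some $q, n$ with $q$ a power of $2$ and $q^n = 2^m$, one may realise $A$ as a subgroup of $\AGL_n(q)$. Consequently every element of $G$ acts on $H$ as an affine map $v \mapsto Mv + b$ with $M \in \GL_n(q)$ and $b \in \FF_q^n$.

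The key observation I would then record is that the fixed-point set of any such map is the solution set of $(M - I)v = -b$, which is either empty or a coset of the $\FF_q$-subspace $\ker(M - I) \leq \FF_q^n$. Hence for every $g \in A$, the cardinality $|\Fix(g)|$ is either $0$ or $|\ker(M-I)|$, a power of $q$, and therefore a power of $2$.

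Finally, I would apply this to the element $g \in G$ provided by the hypothesis, which has $|\Fix(g)|$ odd and strictly greater than $1$. Since $g \in A$, the preceding paragraph forces $|\Fix(g)|$ to be a power of $2$ exceeding $1$; any such integer is even, contradicting the assumption that $|\Fix(g)|$ is odd. This yields the desired contradiction.

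I do not anticipate any real obstacle here: the whole argument reduces to the elementary fact that, in characteristic $2$, a non-empty affine fixed set is a coset of an $\FF_q$-subspace, so its cardinality is a power of $2$ and can be odd only when it equals $1$. The role of this lemma in the broader framework is simply to convert the combinatorial problem of ruling out affine containment (Step~\ref{enu:step2}) into the concrete task of producing, for each $\tau$ in the desired set $I_2$, a single element of $\langle R(H), x_\tau\rangle$ with an odd number of fixed points greater than $1$, which is exactly what is set up for the proof of Proposition~\ref{prop:affine}.
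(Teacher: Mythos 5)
Your proposal is correct and follows essentially the same route as the paper: both reduce to the observation that a primitive affine group on $H$ sits inside $\AGL_m(2)$, and that the fixed-point set of an affine map, if nonempty, is a coset of the kernel of the linear part minus the identity, hence has cardinality a power of $2$, which cannot be odd and greater than $1$. No gaps.
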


\begin{proof}
      Suppose for a contradiction that $\langle R(H),x_\tau\rangle$ is a subgroup of some affine group. Since the degree of $\langle R(H),x_\tau\rangle$ is $2^m$, we have $\langle R(H),x_\tau\rangle\leq \AGL_m(2)$. Note that $\GL_m(2)$ is a point stabilizer in $\AGL_m(2)$ and the set of fixed points of any element in $\GL_m(2)$ forms a vector space over $\FF_2$. Thus, the number of fixed points of any element in $\AGL_m(2)$ is either $0$ or a power of $2$. This contradicts the hypothesis that $\langle R(H),x_\tau\rangle$ contains an element with the number of fixed points odd and greater than $1$.
\end{proof}

We aim to find $\tau\in a_1H_{2,m}$ such that the number of fixed points of $x_\tau^2$ is odd and greater than $1$. To achieve this, we give the following two lemmas.

\begin{lemma}\label{lm:x fixed points}
      The identity $1$ is the unique fixed point of $x_\tau$ if
      \begin{equation}\label{eq:E1}
            \sum_{i=1}^{d+1} \tau_i+\sum_{\ell=1}^{d-d'}\sum_{i=1}^{\min\{d',\ell\}}\sum_{k=1}^i\tau_k\varepsilon_{m-i,\ell-i}=0.\tag{E1}
      \end{equation}
\end{lemma}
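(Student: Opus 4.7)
The plan is to use the explicit coordinate formula from Lemma~\ref{lm:x coordinate form} to characterise directly all $\bm{\alpha}=(\alpha_1,\ldots,\alpha_m)\in \FF_2^m$ satisfying $\bm{\alpha}^{x_\tau}=\bm{\alpha}$, splitting the analysis according to the value of $\alpha_m$.

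If $\alpha_m=0$, then Lemma~\ref{lm:x coordinate form} simplifies to $\bm{\alpha}^{x_\tau}=\bm{\alpha}^\phi=(\alpha_m,\alpha_1,\ldots,\alpha_{m-1})$, so the fixed-point equation forces $\alpha_i=\alpha_{i-1}$ cyclically; combined with $\alpha_m=0$ this yields $\bm{\alpha}=\bm{0}$, which corresponds to the identity $1$. In the main case $\alpha_m=1$, equating coordinates and using $\tau_1=1$ (so that $(\bm{e}_1+\bm{\tau})_1=0$), one obtains the four blocks of equations
\begin{align*}
\alpha_1&=1,\\
\alpha_i&=\alpha_{i-1}+\tau_i \quad \text{for } 2\leq i\leq d'+1,\\
\alpha_i&=\alpha_{i-1}+\tau_i+\lambda_{i-1}(\bm{e}_m,\bm{\alpha}) \quad \text{for } d'+2\leq i\leq d+1,\\
\alpha_i&=\alpha_{i-1} \quad \text{for } d+2\leq i\leq m.
\end{align*}

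I would then sum all $m$ equations over $\FF_2$. The identity $\sum_{i=1}^{m}\alpha_{i-1}=\sum_{i=1}^m \alpha_i$ (a cyclic reindexing) cancels the $\alpha$-side, leaving the necessary consistency condition
\[
0=1+\sum_{i=1}^{d+1}\tau_i+\sum_{\ell=d'+2}^{d+1}\lambda_{\ell-1}(\bm{e}_m,\bm{\alpha}).
\]
From the first two blocks I would read off $\alpha_i=\sum_{k=1}^i \tau_k$ for $i\in\{1,\ldots,d'+1\}$. Substituting this into formula~\eqref{eq:e_m,alpha} for each $\lambda_{\ell-1}(\bm{e}_m,\bm{\alpha})$ and reindexing the outer sum via $j=\ell-1-d'$ (so that $j$ runs over $\{1,\ldots,d-d'\}$) transforms the displayed condition into the assertion that the left-hand side of~\eqref{eq:E1} equals $1$. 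Hence, whenever~\eqref{eq:E1} holds, the case $\alpha_m=1$ admits no solution, and the first case leaves only $\bm{0}$, establishing the lemma.

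The only real obstacle is careful bookkeeping: one must verify that the reindexing $\ell\mapsto \ell-1-d'$ together with~\eqref{eq:e_m,alpha} correctly produces the coefficients $\varepsilon_{m-i,\ell-i}$ appearing in~\eqref{eq:E1}, and that the $\alpha_i$'s entering the inner sum range only over $\{1,\ldots,d'\}$, so that the clean substitution $\alpha_i=\sum_{k=1}^i\tau_k$ is valid (which is guaranteed by Lemma~\ref{lm:properties}\eqref{enu:algebra}).
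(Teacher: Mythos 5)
Your proposal is correct and follows essentially the same route as the paper's own proof: both use the coordinate formula of Lemma~\ref{lm:x coordinate form}, split on $\alpha_m$, sum the resulting system over $\FF_2$ to obtain the consistency condition $1=\sum_{i=1}^{d+1}\tau_i+\sum_{\ell=d'+2}^{d+1}\lambda_{\ell-1}(\bm{e}_m,\bm{\alpha})$, and then substitute $\alpha_i=\sum_{k=1}^{i}\tau_k$ for $i\leq d'$ (valid by Lemma~\ref{lm:properties}\eqref{enu:algebra}) together with~\eqref{eq:e_m,alpha} to reduce this to the negation of~\eqref{eq:E1}. The bookkeeping you flag (the reindexing $\ell\mapsto\ell-1-d'$ and the range of the $\alpha_i$'s entering $\lambda_{\ell-1}(\bm{e}_m,\bm{\alpha})$) checks out exactly as in the paper.
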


\begin{proof}
      By Lemma~\ref{lm:x coordinate form}, a point $\bm{\alpha}=(\alpha_1,\ldots,\alpha_m)$ is fixed by $x_\tau$ if and only if 
      \begin{equation}\label{eq:5.1x}
            \bm{\alpha}=\bm{\alpha}^\phi+\alpha_m\Big(\bm{e}_1+\bm{\tau}+\sum_{\ell=d'+2}^{d+1}\lambda_{\ell-1}(\bm{e}_m,\bm{\alpha})\bm{e}_{\ell}\Big).
      \end{equation}
      Thus, $\bm{0}$ is the unique fixed point $\bm{\alpha}$ of $x_\tau$ such that $\alpha_m=0$. 
      Now let $\alpha_m=1$. Then we obtain a system of equations from~\eqref{eq:5.1x}:
      \begin{equation}\label{eq:x system of equations}
            \begin{cases}
                  \alpha_1&=\ \ \tau_1 \\
                  \alpha_\ell&=\ \ \tau_\ell+\alpha_{\ell-1}\ \ \text{for}\ \ \ell\in\{2,\ldots,d'+1\}\\
                  \alpha_\ell&=\ \ \tau_\ell+\alpha_{\ell-1}+\lambda_{\ell-1}(\bm{e}_m,\bm{\alpha})\ \ \text{for}\ \ \ell\in \{d'+2,\ldots,d+1\}\\
                  \alpha_\ell&=\ \ \alpha_{\ell-1}\ \ \text{for}\ \ \ell\in\{d+2,\ldots,m\}.
            \end{cases}
      \end{equation}
      Summing up all the equations in~\eqref{eq:x system of equations} gives 
      \begin{equation}\label{eq:E1_sum_all}
            \alpha_m=\sum_{i=1}^{d+1} \tau_i+\sum_{\ell=d'+2}^{d+1}\lambda_{\ell-1}(\bm{e}_m,\bm{\alpha}),
      \end{equation}
      while summing up the first $t$ equations, where $t\in\{1,\ldots,d'+1\}$ gives 
      \[
            \alpha_t=\sum_{k=1}^t \tau_k.
      \]
      Taking this into~\eqref{eq:E1_sum_all}, we obtain 
      \begin{align*}
            1=\alpha_m=\sum_{i=1}^{d+1} \tau_i+\sum_{\ell
            =d'+2}^{d+1}\lambda_{\ell-1}(\bm{e}_m,\bm{\alpha})
            &=\sum_{i=1}^{d+1} \tau_i+\sum_{\ell=1}^{d-d'}\lambda_{\ell+d'}(\bm{e}_m,\bm{\alpha})\\
            &=\sum_{i=1}^{d+1} \tau_i+\sum_{\ell=1}^{d-d'}\sum_{i=1}^{\min\{d',\ell\}}\sum_{k=1}^i\tau_k\varepsilon_{m-i,\ell-i}
      \end{align*}
      by~\eqref{eq:e_m,alpha}.
      Hence, if~\eqref{eq:E1} is satisfied, then there is no fixed point $\bm{\alpha}$ of $x_\tau$ such that $\alpha_m=1$. This completes the proof.
\end{proof}

\begin{lemma}\label{lm:x^2 fixed points}
      The permutation $x_\tau^2$ has more than one fixed points if the following equalities hold:
      \begin{align}
            &\sum\limits_{i=1}^{d+1} (m+i)\tau_i+\sum\limits_{\ell=1}^{d-d'} (\ell+d+1) \sum\limits_{i=1}^{\min\{d',\ell\}}\sum\limits_{k=1}^{i-1} (i+k)\tau_k\varepsilon_{m-i,\ell-i}=1,\tag{E2}\label{eq:E2}\\
            &\sum\limits_{i=1}^{d+1} (m+i+1)\tau_i+\sum\limits_{\ell=1}^{d-d'} (\ell+d) \sum\limits_{i=1}^{\min\{d',\ell\}}\sum\limits_{k=1}^{i-1} (i+k)\tau_k\varepsilon_{m-i,\ell-i}=0.\tag{E3}\label{eq:E3}
      \end{align}
\end{lemma}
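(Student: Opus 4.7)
The plan is to compute $\bm{\alpha}^{x_\tau^2}$ explicitly via a double application of Lemma~\ref{lm:x coordinate form}, impose $\bm{\alpha}^{x_\tau^2}=\bm{\alpha}$, and analyze the resulting coordinate system. Since $\bm{0}$ is automatically fixed by $x_\tau^2$, the task reduces to producing a nonzero fixed point.

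Writing $\bm{v}=\bm{e}_1+\bm{\tau}+\sum_{\ell=d'+2}^{d+1}\lambda_{\ell-1}(\bm{e}_m,\bm{\alpha})\bm{e}_\ell$, Lemma~\ref{lm:x coordinate form} gives $\bm{\beta}:=\bm{\alpha}^{x_\tau}=\bm{\alpha}^\phi+\alpha_m\bm{v}$. Since $\bm{v}$ is supported in $\{1,\ldots,d+1\}\subseteq\{1,\ldots,m-1\}$, we have $\beta_m=\alpha_{m-1}$, and a second application of Lemma~\ref{lm:x coordinate form} yields
\[
\bm{\alpha}^{x_\tau^2}=\bm{\alpha}^{\phi^2}+\alpha_m\bm{v}^\phi+\alpha_{m-1}\Bigl(\bm{e}_1+\bm{\tau}+\sum_{\ell=d'+2}^{d+1}\lambda_{\ell-1}(\bm{e}_m,\bm{\beta})\bm{e}_\ell\Bigr).
\]
When $\alpha_m=\alpha_{m-1}=0$ this reduces to $\bm{\alpha}=\bm{\alpha}^{\phi^2}$, which (as $\phi$ is a cyclic shift) forces $\bm{\alpha}=\bm{0}$, so a second fixed point must satisfy $\alpha_m=1$ or $\alpha_{m-1}=1$.

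I would then impose the cleanest of these boundary choices, say $\alpha_m=1$, $\alpha_{m-1}=0$, so that the second correction term vanishes and the equation collapses to $\bm{\alpha}=\bm{\alpha}^{\phi^2}+\bm{v}^\phi$. Coordinate-wise this reads $\alpha_1=0$, $\alpha_2=1$, and $\alpha_i=\alpha_{i-2}+v_{i-1}$ for $3\leq i\leq m$, giving a pair of telescoping recurrences along the odd chain $1,3,5,\ldots$ and the even chain $2,4,6,\ldots$. The low-indexed coordinates $\alpha_1,\ldots,\alpha_{d'}$ come out as explicit partial sums of the $\tau_j$'s (since $v_j=\tau_j$ for $j\in\{2,\ldots,d'+1\}$); substituting these via~\eqref{eq:e_m,alpha} turns each $\lambda_{\ell-1}(\bm{e}_m,\bm{\alpha})$, hence each $v_j$ for $j\in\{d'+2,\ldots,d+1\}$, into an explicit polynomial in the $\tau_k$'s, and the trivial recursion $\alpha_i=\alpha_{i-2}$ for $i\geq d+3$ then propagates these values unchanged up to $\alpha_{m-1}$ and $\alpha_m$.

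Totalling the increments along each chain converts the imposed boundary values $\alpha_{m-1}=0$ and $\alpha_m=1$ into two scalar equations in $\tau_2,\ldots,\tau_{d+1}$ which the plan is to identify, up to a swap of the two cases $(\alpha_m,\alpha_{m-1})=(1,0)$ or $(0,1)$ depending on the parity of $m$, with (E2) and (E3). The weights $(m+i)$ and $(m+i+1)$ on the $\tau_i$ terms encode which chain a given $\tau_i$ feeds into based on the parity of $m-i$; the weights $(\ell+d+1)$ and $(\ell+d)$ similarly record which chain absorbs the $\lambda$-increment at position $j=\ell+d'+1$; and the coefficient $(i+k)$ inside the triple sum arises when one expands each $\alpha_i$ $(i\leq d')$ as a partial sum of $\tau_k$'s inside $\lambda_{\ell-1}(\bm{e}_m,\bm{\alpha})$ and retains only the parity-matching contributions. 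The hard part will be exactly this parity bookkeeping — checking that the $\tau_k\varepsilon_{m-i,\ell-i}$ products surviving in each chain sum match the coefficient patterns $(m+i),(m+i+1),(\ell+d+1),(\ell+d),(i+k)$ on the nose — after which the hypotheses (E2) and (E3) immediately supply the consistency required to produce a nonzero $\bm{\alpha}$ fixed by $x_\tau^2$.
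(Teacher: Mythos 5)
Your proposal follows essentially the same route as the paper's proof: the paper seeks a second fixed point of the prescribed shape $(\alpha_1,\ldots,\alpha_{m-2},1,0)$, i.e.\ $(\alpha_{m-1},\alpha_m)=(1,0)$, whereas you take $(\alpha_{m-1},\alpha_m)=(0,1)$, but in either case the second correction term vanishes, the system telescopes along the two parity chains, and the two consistency conditions at positions $m-1$ and $m$ come out to exactly \eqref{eq:E2} and \eqref{eq:E3} — the weights $(m+i)$, $(m+i+1)$, $(\ell+d+1)$, $(\ell+d)$ already absorb the parity of $m$, so the case swap you hedge about is not needed. The only shortfall is that the decisive parity bookkeeping is described rather than executed, but it does close in the way you predict.
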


\begin{proof}
      Since $1$ is a fixed point of $x_\tau^2$, it suffices to prove the existence of another fixed point of $x_\tau^2$. To achieve this, we will show that the equation 
      \begin{equation}\label{eq:x^2 fixed point}
            (\alpha_1,\ldots,\alpha_{m-2},1,0)=(\alpha_1,\ldots,\alpha_{m-2},1,0)^{x_\tau^2}
      \end{equation}
      on $(\alpha_1,\ldots,\alpha_{m-2})$ has a solution. Let $\bm{\alpha}=(\alpha_1,\ldots,\alpha_{m-2},1,0)$. Then by Lemma~\ref{lm:x coordinate form},
      \[
            \bm{\alpha}^{x_\tau^2}
            =(\bm{\alpha}^\phi)^{x_\tau}
            =\bm{\alpha}^{\phi^2}+\bm{e}_1+\bm{\tau}+\sum_{\ell=d'+2}^{d+1}\lambda_{\ell-1}(\bm{e}_m,\bm{\alpha}^\phi)\bm{e}_{\ell}.
      \]
      Thus, equation~\eqref{eq:x^2 fixed point} is equivalent to the system 
      \[
            \begin{cases}
                  \alpha_1& \hspace{-10pt}=\ \ \tau_1\\
                  \alpha_2&\hspace{-10pt}=\ \ \tau_2\\
                  \alpha_\ell&\hspace{-10pt}=\ \ \tau_\ell+\alpha_{\ell-2}\ \ \text{for}\ \ \ell\in\{3,\ldots,d'+1\}\\
                  \alpha_\ell&\hspace{-10pt}=\ \ \tau_\ell+\alpha_{\ell-2}
                  +\lambda_{\ell-1}(\bm{e}_m,\bm{\alpha}^\phi)\ \ \text{for}\ \ \ell\in\{d'+2,\ldots,d+1\}\\
                  \alpha_\ell&\hspace{-10pt}=\ \ \alpha_{\ell-2}\ \ \text{for}\ \ \ell\in\{d+2,\ldots,m\}\\
                  \alpha_{m-1}&\hspace{-10pt}=\ \ 1\\
                  \alpha_m&\hspace{-10pt}=\ \ 0.
            \end{cases}
      \]
      Summing up the $1$-st, $3$-rd,$\ldots,(2s-1)$-th equations and the $2$-nd, $4$-th,$\ldots,(2s)$-th equations respectively, for each $s \in \{1,\ldots,\lceil m/2\rceil\}$, we conclude that the above system is equivalent to
      \[
            \begin{cases}
                  \alpha_t&\hspace{-10pt}=\ \ \sum\limits_{i=1}^t (t+i+1)\tau_i\ \ \text{for}\ \ t\in\{1,\ldots, d'+1\}\\
                  \alpha_t&\hspace{-10pt}=\ \ \sum\limits_{i=1}^{\min\{t,d+1\}} (t+i+1)\tau_i+\sum\limits_{\ell=d'+2}^{\min\{t,d+1\}}(t+\ell+1)\lambda_{\ell-1}(\bm{e}_m,\bm{\alpha}^\phi)\ \ \text{for}\ \ t\in\{d'+2,\ldots, m\}\\
                  \alpha_{m-1}&\hspace{-10pt}=\ \ 1\\
                  \alpha_m&\hspace{-10pt}=\ \ 0.
            \end{cases}
      \]
      Substituting the first $d'+1$ equations into all the other equations and~\eqref{eq:e_m,alpha} into $\lambda_{\ell-1}(\bm{e}_m,\bm{\alpha}^\varphi)$, we obtain another equivalent system of equations:
      \[
            \begin{cases}
                  \alpha_t&\hspace{-10pt}=\ \ \sum\limits_{i=1}^t (t+i+1)\tau_i\ \ \text{for}\ \ t\in\{1,\ldots, d'+1\}\\
                  \alpha_t&\hspace{-10pt}=\ \ \sum\limits_{i=1}^t (t+i+1)\tau_i+\sum\limits_{\ell=d'+2}^{\min\{t,d+1\}}(t+\ell+1) \sum\limits_{i=1}^{\min\{d',\ell-d'-1\}} \sum\limits_{k=1}^{i-1}(i+k)\tau_k\varepsilon_{m-i,\ell-d'-1-i}\\
                  &\hspace{-10pt}\quad\ \ \text{for}\ \ t\in\{d'+2,\ldots, m\}\\
                  \alpha_{m-1}&\hspace{-10pt}=\ \ 1\\
                  \alpha_m&\hspace{-10pt}=\ \ 0.
            \end{cases}
      \]    
      It follows that the system of equations on $(\alpha_1,\ldots,\alpha_m)$ has a unique solution if and only if the following equations hold:
      \begin{align*}
            1=&\;\sum\limits_{i=1}^{m-1} (m+i)\tau_i+\sum\limits_{\ell=d'+2}^{d+1} (m+\ell) \sum\limits_{i=1}^{\min\{d',\ell-d'-1\}}\sum\limits_{k=1}^{i-1} (i+k)\tau_k\varepsilon_{m-i,\ell-d'-1-i},\\
            0=&\;\sum\limits_{i=1}^{m} (m+i+1)\tau_i+\sum\limits_{\ell=d'+2}^{d+1} (m+\ell+1) \sum\limits_{i=1}^{\min\{d',\ell-d'-1\}}\sum\limits_{k=1}^{i-1} (i+k)\tau_k\varepsilon_{m-i,\ell-d'-1-i}.
      \end{align*}
      These two equations are equivalent to~\eqref{eq:E2} and~\eqref{eq:E3} respectively.
\end{proof}

Note that the set of fixed points of $x_\tau^2$ is exactly the union of all the orbits of $x_\tau$ of length at most $2$. We infer from Lemmas~\ref{lm:affine},~\ref{lm:x fixed points} and~\ref{lm:x^2 fixed points} that the proof of Proposition~\ref{prop:affine} now centers on identifying the common solutions of $\eqref{eq:E1}$, $\eqref{eq:E2}$ and $\eqref{eq:E3}$. 

\begin{proof}[Proof of Proposition~\ref{prop:affine}]
      If $\bm{\tau}=(\tau_1,\ldots,\tau_m)$ satisfies $\eqref{eq:E1}$, $\eqref{eq:E2}$ and $\eqref{eq:E3}$, then it follows from Lemmas~\ref{lm:x fixed points} and~\ref{lm:x^2 fixed points} that the number of fixed points of $x_\tau^2$ is odd and greater than $1$, and we conclude from Lemma~\ref{lm:affine} that $\langle R(H),x_\tau\rangle$ is not a subgroup of any affine group on $H$. Hence, to prove the proposition, it suffices to show that for each $(\tau_2,\ldots,\tau_{d-4},\tau_{d-1},\tau_d,\tau_{d+1})\in\FF_2^{d-2}$ such that~\eqref{eq:sum} holds, there exists $(\tau_{d-3},\tau_{d-2})\in\FF_2^2$ such that $(\tau_2,\ldots,\tau_{d+1})$ is a common solution to~\eqref{eq:E1},~\eqref{eq:E2} and~\eqref{eq:E3}. 
      
      Let $\eqref{eq:E1}+\eqref{eq:E2}+\eqref{eq:E3}$ be the equation obtained by summing up~\eqref{eq:E1},~\eqref{eq:E2} and~\eqref{eq:E3}. Then it is straightforward to calculate that $\eqref{eq:E1}+\eqref{eq:E2}+\eqref{eq:E3}$ is equal to
      \[
            \sum\limits_{\ell=1}^{d-d'} \sum\limits_{i=1}^{\min\{d',\ell\}}\sum\limits_{k=1}^{i} (i+k+1)\tau_k\varepsilon_{m-i,\ell-i}=1,
      \]
      which can be written as
      \begin{equation}\label{eq:E1-E2-E3}
            \sum\limits_{k=1}^{d'} \Big( \sum\limits_{i=k}^{d'}\sum\limits_{\ell=i}^{d-d'} (i+k+1)\varepsilon_{m-i,\ell-i}\Big)\tau_k=1.
      \end{equation}
      We then deduce from~\eqref{enu:C1} that~\eqref{eq:E1-E2-E3} that
      \[
            \tau_{d'}+
            \sum\limits_{k=1}^{d'-1} \Big( \sum\limits_{i=k}^{d'}\sum\limits_{\ell=i}^{d-d'} (i+k+1)\varepsilon_{m-i,\ell-i}\Big)\tau_k=1,
      \]
      which is equivalent to~\eqref{eq:sum}.
      Hence, for each $ (\tau_2,\ldots,\tau_{d-4},\tau_{d-1},\tau_d,\tau_{d+1})\in\FF_2^{d-2}$ such that~\eqref{eq:sum} holds, $\eqref{eq:E1}+\eqref{eq:E2}+\eqref{eq:E3}$ holds for all $(\tau_{d-3},\tau_{d-2})\in\FF_2^2$. 
      
      Fix an arbitrary $(\tau_2,\ldots,\tau_{d-4},\tau_{d-1},\tau_d,\tau_{d+1})\in\FF_2^{d-2}$ such that ~\eqref{eq:sum} holds. Then,~\eqref{eq:E2} and~\eqref{eq:E3} can be written as
      \[
            (m+d-3)\tau_{d-3}+(m+d-2)\tau_{d-2}=\beta\ \ \text{and}\ \ (m+d-2)\tau_{d-3}+(m+d-1)\tau_{d-2}=\gamma
      \]
      respectively, for some $\beta,\gamma\in\FF_2$. Noting that
      \[
            \begin{pmatrix}
                  m+d-3 & m+d-2\\
                  m+d-2 & m+d-1
            \end{pmatrix}\in
            \left\{
            \begin{pmatrix}
                  1 & 0\\
                  0 & 1
            \end{pmatrix},
            \begin{pmatrix}
                  0 & 1\\
                  1 & 0
            \end{pmatrix}
            \right\},
      \]
      we assert that there exists $(\tau_{d-3},\tau_{d-2})\in\FF_2^2$ such that~\eqref{eq:E2} and~\eqref{eq:E3} hold simultaneously. Since $\eqref{eq:E1}+\eqref{eq:E2}+\eqref{eq:E3}$ holds for all $(\tau_{d-3},\tau_{d-2})\in\FF_2^2$, we conclude that there exists $(\tau_{d-3},\tau_{d-2})\in\FF_2^2$ such that $(\tau_2,\ldots,\tau_{d+1})$ is a common solution to~\eqref{eq:E1},~\eqref{eq:E2} and~\eqref{eq:E3},  as desired.
\end{proof}

\section{Product Action}\label{sec:PA}

In this section, we use the notation defined in Section~\ref{sec:preliminary}. Let $H$ be an arbitrary concentric group. Our objective is to prove the following proposition.

\begin{proposition}\label{prop:exwr}
      Suppose that $3d>2m$, and that $\eqref{enu:C1}$ and $\eqref{enu:C2}$ hold. Then for each $(\tau_2,\ldots,\tau_{d-4},\tau_{d+1})\in\FF_2^{d-4}$, there exists $(\tau_{d-1},\tau_d)\in\FF_2^2$ such that for any $(\tau_{d-3},\tau_{d-2})\in\FF_2^2$, the group $\langle R(H),x_\tau\rangle$ is not a subgroup of $\Sym(\Delta)\wr S_k$ for any set $\Delta$ and positive integer $k$ with $|\Delta|=2^{m/k}\geq 5$.
\end{proposition}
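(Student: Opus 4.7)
The plan is to show that $G := \langle R(H), x_\tau\rangle$ cannot sit inside any product-action wreath product $W = \Sym(\Delta) \wr S_k$ with $|\Delta|^k = 2^m$ and $|\Delta| \geq 5$. Such an embedding would force $G$ to preserve a cartesian decomposition of $H$ into $k$ orthogonal partitions with blocks of size $|\Delta|^{k-1}$, so the strategy is to produce elements of $G$ whose action is incompatible with any such decomposition. I will use the standard fact that in product action, the fixed-point set of any $w \in W$ is a ``rectangular'' subset of $\Delta^k$, and in particular $|\Fix(w)|$ is always a product of positive integers each at most $|\Delta| = 2^{m/k}$.

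The first step is to extract a structured elementary abelian $2$-subgroup of $G$ from the calculations of Section~\ref{sec:calculation}. Set $z_t := x_\tau^{-t} y_\tau x_\tau^t$ for $t \in \{-d'+1,\ldots,d-d'\}$. By Lemma~\ref{lm:regular_y^{x^t}}, $z_t$ is an involution that toggles the single coordinate $\bm{e}_{2d'+t}$ on the set $\{\bm{\alpha} : f_t(\bm{\alpha}) = 1\}$; and by Lemma~\ref{lm:expressions of f_t} each $f_t$ depends only on coordinates $\alpha_i$ with $i \in \{1,\ldots,d'\} \cup \{d+1,\ldots,m\}$, hence never on any $\alpha_{2d'+s}$. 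A direct computation of the composition then shows that $z_s$ and $z_t$ commute, so $E := \langle z_t : -d'+1 \leq t \leq d-d' \rangle$ is an elementary abelian $2$-subgroup of $G$, and Lemma~\ref{lm:number of solutoins} bounds the common fixed point set of $E$ on $H$ by $2^{d'-2}$, which is small compared with $|H| = 2^m$.

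Next I would bring in the ``twist'' involution $w := x_\tau^{-(d-d'+1)} y_\tau x_\tau^{d-d'+1}$. By Lemma~\ref{lm:regular_y^{x^t}}, $w$ acts by a $\bm{\tau}$-shift (plus a $Z(H)$-correction) on points $\bm{\alpha}$ with $f_{d-d'+1}(\bm{\alpha}) = 1$, and by Lemma~\ref{lm:findGamma} such a $\bm{\gamma}$ with $\gamma_1 = \gamma_m = 0$ exists uniformly in $(\tau_{d-3},\ldots,\tau_{d+1})$. Feeding $\bm{\gamma}^w$ into Lemma~\ref{lm:f_0,f_1} yields explicit linear formulas in $(\tau_{d-1},\tau_d,\tau_{d+1})$ and certain $\lambda$-convolutions for $f_0(\bm{\gamma}^w)$ and $f_{-1}(\bm{\gamma}^w)$, which by Corollary~\ref{cor:fpiff} detect whether $w$ interchanges the fixed- and moved-loci of $z_0$ and $z_{-1}$. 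Tuning $(\tau_{d-1},\tau_d)$, I can force $w$ to swap a fixed point of $z_0 = y_\tau$ with a non-fixed point, and this interaction between $E$ and $w$ is the obstruction: together with the regularity of $R(H)$, the orbit of $\bm{\gamma}$ under $\langle E, w, R(H)\rangle$ produces an element of $G$ whose fixed-point structure inside $H$ cannot be realised as a rectangular product of sets of sizes at most $|\Delta|$, contradicting the product-action shape of $W$.

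The step I expect to be hardest is this final conversion of the algebraic data from Lemmas~\ref{lm:f_0,f_1} and~\ref{lm:number of solutoins} into a concrete obstruction that holds uniformly for every $(\tau_{d-3},\tau_{d-2}) \in \FF_2^2$ and every admissible $k \leq m/3$. Since $|\Fix(\cdot)|$ of any element of $G$ is a power of two, a bare fixed-point count will not suffice; one must track the \emph{shape} of the rectangular fix-sets and exploit $R(H)$-regularity to argue that the $E$-orbit partition of $H$ together with the $\bm{\tau}$-shift effected by $w$ is inconsistent with any $k$-fold cartesian decomposition into factors of size $\geq 5$. The two free parameters $\tau_{d-1}, \tau_d$ are just enough to dominate the $2^2$ choices of $(\tau_{d-3},\tau_{d-2})$, which is why the statement is pitched with exactly two undetermined coordinates.
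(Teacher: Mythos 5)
There is a genuine gap, on two counts. First, your construction of the elementary abelian group $E$ rests on the claim that each $f_t$ depends only on coordinates $\alpha_i$ with $i\in\{1,\ldots,d'\}\cup\{d+1,\ldots,m\}$, hence never on a toggled coordinate $\alpha_{2d'+s}$; this is false. Already $f_0(\bm{\alpha})=\sum_{i=d}^{m-1}\alpha_i\varepsilon_{i,0}+\alpha_m\tau_{d+1}$ depends on $\alpha_d$ (because $\varepsilon_{d,0}=1$ by \eqref{enu:C1}), and $d=2d'+(d-2d')$ with $d-2d'\in\{1,\ldots,d-d'\}$, so $z_{d-2d'}$ toggles exactly a coordinate that $f_0$ sees; a direct computation then shows $z_0z_{d-2d'}$ and $z_{d-2d'}z_0$ differ by $f_{d-2d'}(\bm{\alpha})\bm{e}_{2d'}$. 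Likewise Lemma~\ref{lm:expressions of f_t}\eqref{enu:f_t3} shows $f_t$ for $t\le 0$ involves $\alpha_{d+t}$ with $d+t\in\{d-d'+1,\ldots,d\}$, again outside your claimed range. So the group generated by the $z_t$ need not be elementary abelian, and the ``common fixed point set of $E$'' framing does not directly apply.

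Second, and more importantly, the endgame is not carried out: you assert that the orbit of $\bm{\gamma}$ under $\langle E,w,R(H)\rangle$ yields an element whose fixed-point set is not ``rectangular,'' but you give no such element and you yourself flag this as the hardest step. The paper's actual mechanism is different and more specific. Since each $z_t$ is an involution of fixed point ratio exactly $1/2$ (Lemma~\ref{lm:y}), Lemma~\ref{lm:coordinate} forces it, inside a product action, to be supported on a \emph{single} coordinate $j_t$ of $\Delta^k$; the tuning of $(\tau_{d-1},\tau_d)$ via Lemmas~\ref{lm:findGamma} and~\ref{lm:f_0,f_1} (your ``twist'' step, which is essentially correct) is used precisely to force $j_0=j_{-1}=j_{d-d'+1}$, whence $j_t=j_0$ for all $t$ by Lemma~\ref{lm:module}. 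The contradiction is then a counting one, not a shape one: by Lemma~\ref{lm:determine} every $f_t$ with $t\in\{-d'+1,\ldots,d-d'\}$ is constant on each slice $\Delta^{j_0-1}\times\{\beta\}\times\Delta^{k-j_0}$, so such a slice consists of $|\Delta|^{k-1}\ge 2^{m/2}$ common solutions of the system $f_t(\bm{\alpha})=\gamma_t$, contradicting the bound $2^{d'-2}$ of Lemma~\ref{lm:number of solutoins}. Without the single-coordinate-support step and the slice-counting argument, your proposal does not close.
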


We begin by proving two elementary lemmas regarding involutions in wreath products. For a permutation $\pi$ on a set $\Omega$, denote by $\Fix_\Omega(\pi)$ the set of fixed points of $\pi$ on $\Omega$, and by $\fpr_\Omega(\pi):=|\Fix_\Omega(\pi)|/|\Omega|$ the fixed point ratio of $\pi$ on $\Omega$. 

\begin{lemma}\label{lm:fixed points}
      Let $\Delta$ be a finite set, $k\geq 2$ be an integer, and let $g=(g_1,\ldots,g_k)\pi\in \Sym(\Delta)\wr S_k$ with $(g_1,\ldots,g_k)\in\Sym(\Delta)^k$ and $\pi\in S_k$. If $g$ is an involution, then $|\mathrm{Supp}(\pi)|$ is even and the number of fixed points of $g$ is
      \[ 
            |\Delta|^{\frac{|\mathrm{Supp}(\pi)|}{2}}\prod_{i\notin \mathrm{Supp}(\pi)} |\Fix_{\Delta}(g_i)|.
      \]
\end{lemma}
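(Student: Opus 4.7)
The plan is to first extract what the involution condition $g^2=1$ tells us about $\pi$ and the tuple $(g_1,\ldots,g_k)$, and then read off the fixed points coordinate-by-coordinate on $\Delta^k$.

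First I would expand $g^2$ using the standard multiplication in $\Sym(\Delta)\wr S_k$. Since conjugation by $\pi$ permutes the base-group coordinates, one gets $g^2=\big(g_1 g_{1^{\pi^{-1}}},\ldots,g_k g_{k^{\pi^{-1}}}\big)\pi^2$. Setting this equal to the identity forces $\pi^2=1$ and $g_{i^\pi}=g_i^{-1}$ for every $i$. Because $\pi$ is then an involution in $S_k$, its support is a disjoint union of transpositions, so $|\mathrm{Supp}(\pi)|$ is automatically even; this gives the first half of the lemma.

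Next I would count the fixed points of $g$ on $\Delta^k$. With the natural action, $(\delta_1,\ldots,\delta_k)$ is fixed by $g$ iff $\delta_i^{g_i}=\delta_{i^\pi}$ for every $i$. Splitting by whether $i\in\mathrm{Supp}(\pi)$: if $i^\pi=i$ the condition is just $\delta_i\in\Fix_\Delta(g_i)$, contributing a factor $|\Fix_\Delta(g_i)|$ for each such index; if $\{i,j\}$ is a $2$-cycle of $\pi$, the two conditions $\delta_i^{g_i}=\delta_j$ and $\delta_j^{g_j}=\delta_i$ collapse to ``$\delta_i$ free, $\delta_j$ determined by $\delta_j=\delta_i^{g_i}$,'' with a consistency requirement $\delta_i^{g_i g_j}=\delta_i$ that is vacuous by the relation $g_j=g_i^{-1}$ established above. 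Each transposition therefore contributes a free factor of $|\Delta|$.

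Multiplying the contributions over the $|\mathrm{Supp}(\pi)|/2$ transpositions and over the fixed points of $\pi$ yields precisely the claimed formula
\[
|\Delta|^{|\mathrm{Supp}(\pi)|/2}\prod_{i\notin\mathrm{Supp}(\pi)}|\Fix_\Delta(g_i)|.
\]
There is no real obstacle here; the only thing to be careful about is the convention used for the action of $\Sym(\Delta)\wr S_k$ on $\Delta^k$ so that the computation of $g^2$ and the fixed-point conditions use the same convention consistently.
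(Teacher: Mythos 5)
Your proposal is correct and follows essentially the same route as the paper: both extract $\pi^2=1$ (hence $|\mathrm{Supp}(\pi)|$ even) and the relation $g_ig_{i^\pi}=1$ from $g^2=1$, then count fixed points coordinatewise, with each transposition of $\pi$ contributing a free factor $|\Delta|$ and each fixed index contributing $|\Fix_\Delta(g_i)|$. The only cosmetic difference is that you derive $g_{i^\pi}=g_i^{-1}$ by expanding $g^2$ in the group, whereas the paper reads the same relation off the action of $g^2$ on points; these are equivalent.
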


\begin{proof}
      Suppose that $g$ is an involution and let $t=|\mathrm{Supp}(\pi)|$. We first note that $\pi^2=1$ by considering the nature quotient from $\Sym(\Delta)\wr S_k$ to $S_k$. Thus, $t$ is even. Since the conclusion of the lemma is obvious when $\pi=1$, we assume $\pi\neq 1$. Then $\pi$ can be written as a product of disjoint transpositions $(i_1,j_1)\cdots(i_{t/2},j_{t/2})$. Since $\pi^2=1$ and $g^2=1$, it is straightforward to calculate that
      \begin{equation}\label{eq:wr1}
            (\beta_1,\ldots,\beta_k)^{g}=(\beta_{1^{\pi}}^{g_{1^{\pi}}},\ldots,\beta_{k^\pi}^{g_{k^{\pi}}})
      \end{equation}
      and 
      \[
            (\beta_1,\ldots,\beta_k)=(\beta_1,\ldots,\beta_k)^{g^2}=(\beta_{1^{\pi}}^{g_{1^{\pi}}},\ldots,\beta_{k^\pi}^{g_{k^{\pi}}})^g=(\beta_1^{g_1g_{1^{\pi}}},\ldots,\beta_k^{g_kg_{k^{\pi}}}).
      \]
      It follows that 
      \begin{equation}\label{eq:wr3}
            \beta_i^{g_ig_{i^\pi}}=\beta_i\quad\text{for each}\ \ i\in\{1,\ldots,k\}.
      \end{equation}
      Since $\pi=(i_1,j_1)\cdots(i_{t/2},j_{t/2})$, we derive from~\eqref{eq:wr1} that $g$ fixes $(\beta_1,\ldots, \beta_k)$ if and only if 
      \[
            \beta_{i_s}=\beta_{i_s^\pi}^{g_{i_s^\pi}}=\beta_{j_s}^{g_{j_s}} \quad\text{and}\quad \beta_{j_s}=\beta_{j_s^\pi}^{g_{j_s^\pi}}=\beta_{i_s}^{g_{i_s}} \quad \text{for each}\ \ s\in \{1,\ldots,  t/2\}
      \]
      and 
      \[
            \beta_i=\beta_{i^\pi}^{g_{i^\pi}}=\beta_{i}^{g_i}\quad \text{for each}\quad i\notin\{i_1,j_1,\ldots,i_{t/2},j_{t/2}\}=\mathrm{Supp}(\pi).
      \]
      This combined with~\eqref{eq:wr3} implies that $(\beta_1,\ldots,\beta_k)$ is a fixed point of $g$ if and only if 
      \[
            \beta_{j_s}=\beta_{i_s}^{g_{i_s}}\quad \text{for each}\ \ s\in \{1,\ldots,  t/2\}
      \]
      and 
      \[
            \beta_i\in \Fix_{\Delta}(g_i)\quad \text{for each}\ \ i\notin \mathrm{Supp}(\pi).
      \]
      Hence, the number of fixed points of $g$ is 
      \[
            |\Delta|^{\frac{t}{2}}\prod_{i\notin \mathrm{Supp}(\pi)} |\Fix_{\Delta}(g_i)|,
      \]
      as required.
\end{proof}

\begin{lemma}\label{lm:coordinate}
      Let $G\leq \Sym(\Delta)\wr S_k$ be a primitive wreath product acting on $\Omega=\Delta^k$, where $k\geq 2$ and $|\Delta|=2^\ell$ for some integer $\ell\geq 2$. Let $g=(g_1,\ldots,g_k)\pi\in G$ be an involution with $(g_1,\ldots,g_k)\in \Sym(\Delta)^k$ and $\pi\in S_k$. If $\fpr_\Omega(g)=1/2$, then $g=(1,\ldots,1,g_j,1,\ldots,1)$ for some $j\in \{1,\ldots, k\}$ such that $g_j$ is an involution with $\fpr_{\Delta}(g_j)=1/2$.
\end{lemma}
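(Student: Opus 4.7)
The plan is to reduce the problem to a counting argument using Lemma~\ref{lm:fixed points}, which is tailored exactly for this kind of computation. First, I would set $t=|\mathrm{Supp}(\pi)|$ (which Lemma~\ref{lm:fixed points} tells us is even) and write out the fixed point ratio
\[
\fpr_\Omega(g)=\frac{|\Delta|^{t/2}\prod_{i\notin \mathrm{Supp}(\pi)}|\Fix_{\Delta}(g_i)|}{|\Delta|^k}=\frac{1}{2}.
\]
Rearranging gives the key identity
\[
\prod_{i\notin \mathrm{Supp}(\pi)}|\Fix_{\Delta}(g_i)|=\frac{|\Delta|^{k-t/2}}{2}.
\]

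Next I would show that $\pi=1$. Since each factor on the left is at most $|\Delta|$ and there are $k-t$ factors in the product, the left-hand side is bounded above by $|\Delta|^{k-t}$, forcing $|\Delta|^{k-t}\geq |\Delta|^{k-t/2}/2$, i.e.\ $|\Delta|^{t/2}\leq 2$. As $|\Delta|=2^{\ell}\geq 4$ and $t$ is even, this rules out $t\geq 2$ and yields $t=0$, hence $\pi=1$. Consequently $g=(g_1,\ldots,g_k)$ with $g_i^2=1$, so each $g_i$ is either trivial or an involution.

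In the final step, setting $f_i=|\Fix_{\Delta}(g_i)|$, the identity collapses to $\prod_{i=1}^k f_i=2^{\ell k-1}$. Since this is a (positive) pure power of $2$, every $f_i$ must itself be a positive power of $2$ (any odd prime factor or zero factor would spoil the equality). Writing $f_i=2^{a_i}$ with $0\leq a_i\leq \ell$ gives $\sum_{i=1}^k a_i=\ell k-1$, so the total deficit from the maximum sum $\ell k$ is exactly $1$. Hence there is a unique index $j$ with $a_j=\ell-1$, while $a_i=\ell$ (equivalently $g_i=1$) for $i\neq j$. The coordinate $g_j$ is then a nontrivial involution on $\Delta$ with $\fpr_{\Delta}(g_j)=2^{\ell-1}/2^\ell=1/2$, which is precisely the desired conclusion.

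The main obstacle I anticipate is really just verifying the bookkeeping carefully; no deep obstruction arises. In particular, primitivity of $G$ is not needed in the argument itself (it is used elsewhere to justify the wreath product embedding in the first place), and the hypothesis $\ell\geq 2$ is used in exactly one place, namely to guarantee $|\Delta|\geq 4$ so that $|\Delta|^{t/2}\leq 2$ collapses to $t=0$.
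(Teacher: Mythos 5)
Your proof is correct and follows essentially the same route as the paper's: apply Lemma~\ref{lm:fixed points}, bound the product to force $|\mathrm{Supp}(\pi)|=0$ using $|\Delta|\geq 4$, and then use the fact that $\prod_i|\Fix_\Delta(g_i)|=2^{k\ell-1}$ with each factor a power of $2$ at most $2^\ell$ to isolate a unique nontrivial coordinate. Your side remarks (that primitivity is not actually used, and that $\ell\geq 2$ enters only to kill the case $t>0$) are both accurate.
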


\begin{proof}
      Suppose that $\fpr_\Omega(g)=1/2$ and let $t=|\mathrm{Supp}(\pi)|$. Since $g$ is an involution, we see from Lemma~\ref{lm:fixed points} that $t$ is even and
      \[
            \frac{1}{2}= \fpr_\Omega(g)=\frac{|\Delta|^{\frac{t}{2}}\prod_{i\notin \mathrm{Supp}(\pi)} |\Fix_{\Delta}(g_i)|}{|\Delta|^k}\leq \frac{|\Delta|^{\frac{t}{2}}|\Delta|^{k-t}}{|\Delta|^{k}}= \frac{1}{|\Delta|^{\frac{t}{2}}}.
      \]
      If $t>0$, then $|\Delta|\leq 2$, a contradiction. Thus $\pi=1$, $g=(g_1,\ldots,g_k)$, and so
      \[
            \prod_{i=1}^k |\Fix_{\Delta}(g_i)|=|\Fix_\Omega(g)|=\fpr_\Omega(g)|\Omega|=\frac{1}{2}|\Delta|^k=2^{k\ell-1}.
      \]
      This implies that $|\Fix_\Delta(g_i)|$ is a power of $2$ for each $i\in\{1,\ldots,k\}$. Therefore, there exists a unique $j\in \{1,\ldots,k\}$ such that $|\Fix_{\Delta}(g_j)|=2^{\ell-1}$ and $|\Fix_{\Delta}(g_i)|=2^{\ell}=|\Delta|$ for all $i\neq j$. Thus, we conclude that $g=(1,\ldots,1,g_j,1,\ldots,1)$ with $g_j$ an involution and $\fpr_{\Delta}(g_j)=1/2$.
\end{proof}

Suppose that $\langle R(H),x_\tau\rangle$ is a primitive subgroup of $\Sym(\Delta)\wr S_k$ acting on $\Omega=\Delta^k$ with product action, where $|\Delta|=2^\ell\geq 5$ and $k\ell=m$. It follows  that $\langle R(H),x_\tau\rangle$ acting on $\Omega$ is permutation isomorphic to $\langle R(H),x_\tau\rangle$ acting on $\FF_2^m$. In other words, there exists a bijection $\psi: \FF_2^m \rightarrow \Omega$ 
such that $\bm{\alpha}^{\psi g}=\bm{\alpha}^{g\psi}$ for each $\bm{\alpha}\in \FF_2^m$ and each $g\in \langle R(H),x_\tau\rangle$. This implies that for each $g\in \langle R(H),x_\tau\rangle$,
      \[
            \bm{\alpha}\in\Fix_{\FF_2^m}(g)
            \quad\text{if and only if}\quad 
            \bm{\alpha}^\psi\in\Fix_\Omega(g).
      \]

      Note by Lemma~\ref{lm:y} that $y_\tau$ is an involution with fixed point ratio $1/2$. Hence, $y_\tau^{x_\tau^t}$ is an involution with fixed point ratio $1/2$ for each $t\in\Z$. Given $\tau\in a_1H_{2,m}$, we derive from Lemma~\ref{lm:coordinate} that for each $t\in\Z$, there exists a unique $j_t\in \{1,\ldots,k\}$ such that 
      \begin{equation}\label{eq:j_t}
            y_\tau^{x_\tau^t}=(1,\ldots,1,g_{j_t}^{(t)},1,\ldots,1)\in \Sym(\Delta)^k,
      \end{equation}
      for some $g_{j_t}^{(t)}\in\Sym(\Delta)$ with $\fpr_\Delta(g_{j_t}^{(t)})=1/2$, where $g_{j_t}^{(t)}$ appears at the $j_t$-th coordinate.

      \begin{lemma}\label{lm:module}
            Given $\tau\in a_1H_{2,m}$, and let $j_t$ be as in~\eqref{eq:j_t}. If $x_\tau=(x_1,\ldots,x_k)\pi$ for some $x_i\in \Sym(\Delta)$ and $\pi\in S_k$, then $j_t=j_0^{\pi^t}$ for each $t\in \Z$.
      \end{lemma}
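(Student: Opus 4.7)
The plan is to reduce the lemma to a single structural fact about conjugation in the wreath product $\Sym(\Delta)\wr S_k$, and then induct. The structural fact states: if $z=(z_1,\ldots,z_k)\in\Sym(\Delta)^k$ is a base-group element whose unique non-identity coordinate sits at position $j$, and if $w=(w_1,\ldots,w_k)\sigma\in\Sym(\Delta)\wr S_k$, then $z^w$ again lies in the base group, with its unique non-identity coordinate at position $j^\sigma$. This follows by a direct computation using~\eqref{eq:wr1}: the fixed-point set $\Fix_\Omega(z)$ is cut out by the single-coordinate condition $\beta_j\in\Fix_\Delta(z_j)$, and transporting by $w$ gives $\Fix_\Omega(z^w)=\Fix_\Omega(z)^w$, which—by plugging the explicit formula~\eqref{eq:wr1} for $(\beta)^w$ into the constraint on $\beta_j$—is cut out by a condition on coordinate $j^\sigma$ alone. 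Since the projection of $z^w$ to the top quotient $S_k$ is trivial, $z^w$ is itself a base-group element, and its fixed-point set depending on only one coordinate forces its unique non-identity entry to sit precisely at position $j^\sigma$.

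Granting this, the lemma follows by a short induction. A routine induction using the wreath-product multiplication rule (implicit in~\eqref{eq:wr1}) shows that $x_\tau^t=(\widetilde{x}_1,\ldots,\widetilde{x}_k)\pi^t$ for some base-group entries $\widetilde{x}_i$; in particular, the top component of $x_\tau^t$ is $\pi^t$ for every $t\in\Z$. By~\eqref{eq:j_t}, $y_\tau$ is a base-group element whose unique non-identity coordinate sits at position $j_0$. Applying the structural fact with $z=y_\tau$ and $w=x_\tau^t$ therefore shows that $y_\tau^{x_\tau^t}$ is a base-group element with its unique non-identity coordinate at position $j_0^{\pi^t}$. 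The uniqueness clause in~\eqref{eq:j_t} then forces $j_t=j_0^{\pi^t}$, as required; the case $t<0$ can be handled identically by conjugating with $x_\tau^{-1}$, whose top component is $\pi^{-1}$.

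No substantial obstacle is anticipated: the argument is a clean reduction to the wreath-product conjugation rule, and the only point requiring care is the bookkeeping of the coordinate action convention fixed by~\eqref{eq:wr1} when verifying the structural fact.
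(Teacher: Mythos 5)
Your proposal is correct and follows essentially the same route as the paper: both arguments reduce to the wreath-product conjugation rule, the paper computing $j_{t+1}=j_t^{\pi}$ coordinate-by-coordinate and iterating, while you conjugate $y_\tau$ by $x_\tau^{t}$ (whose top component is $\pi^{t}$) in a single step. The only point worth flagging is that your fixed-point justification of the structural fact tacitly requires $\Fix_\Delta(z_j)$ to be a nonempty proper subset of $\Delta$ --- guaranteed here since $\fpr_\Delta(g_{j_0}^{(0)})=1/2$ --- whereas the direct computation $w^{-1}zw=\sigma^{-1}(z_1^{w_1},\ldots,z_k^{w_k})\sigma$ used in the paper avoids this caveat entirely.
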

      
      \begin{proof}
            Since $\tau$ is given, we abbreviate $x_\tau$ and $y_\tau$ to $x$ and $y$, respectively. Take an arbitrary $t\in\Z$ and $(\beta_1,\ldots,\beta_k)\in \Omega$. By a straightforward calculation, we obtain
            \begin{align*}
                  (\beta_1,\ldots,\beta_k)^{y^{x^{t+1}}}
                  &=(\beta_1,\ldots,\beta_k)^{x^{-1}(1,\ldots,1,g_{j_t}^{(t)},1,\ldots,1)x}\\
                  &=(\beta_1,\ldots,\beta_k)^{\pi^{-1}(x_1^{-1},\ldots,x_k^{-1})(1,\ldots,1,g_{j_t}^{(t)},1,\ldots,1)(x_1,\ldots,x_k)\pi}\\
                  &=(\beta_{1^\pi},\ldots,\beta_{k^{\pi}})^{(x_1^{-1},\ldots,x_k^{-1})(1,\ldots,1,g_{j_t}^{(t)},1,\ldots,1)(x_1,\ldots,x_k)\pi}\\
                  &=(\beta_{1^\pi},\ldots,\beta_{k^{\pi}})^{(1,\ldots,1,(g_{j_t}^{(t)})^{x_{j_t}},1,\ldots,1)\pi}\\
                  &=(\beta_{1^\pi},\ldots,\beta_{(j_t-1)^\pi},\beta_{j_t^\pi}^{(g_{j_t}^{(t)})^{x_{j_t}}},\beta_{(j_t+1)^\pi},\ldots,\beta_{k^{\pi}})^{\pi}\\
                  &=(\beta_1,\ldots,\beta_{j_t^\pi-1},\beta_{j_t^\pi}^{(g_{j_t}^{(t)})^{x_{j_t}}},\beta_{j_t^\pi+1},\ldots,\beta_k)\\
                  &=(\beta_1,\ldots,\beta_k)^{(1,\ldots,1,(g_{j_t}^{(t)})^{x_{j_t}},1,\ldots,1)},
            \end{align*}
            where $(g_{j_t}^{(t)})^{x_{j_t}}$ appears at the $j_t^\pi$-th coordinate. Thus, 
            \[
                  (1,\ldots,1,g_{j_{t+1}}^{(t+1)},1,\ldots,1)=y^{x^{t+1}}=(1,\ldots,1,(g_{j_t}^{(t)})^{x_{j_t}},1,\ldots,1).
            \]
            This implies that $j_{t+1}=j_t^\pi$ for each $t\in\Z$, and so the desired conclusion holds.
      \end{proof}

\begin{definition}\label{def:determine}
      Let $f: \FF_2^m \to \FF_2$ be a function. We define $E(f)$ to be the subset of $\{1,\ldots,k\}$ such that $j\in E(f)$ if and only if there exists a nonempty proper subset $\Delta_1$ of $\Delta$ such that 
      \[
            \{f(\bm{\alpha})\mid \bm{\alpha}^\psi\in \Delta^{j-1}\times\Delta_1\times\Delta^{k-j}\}=\{0\} \ \text{ and }\ 
            \{f(\bm{\alpha})\mid \bm{\alpha}^\psi\in \Delta^{j-1}\times(\Delta\setminus\Delta_1)\times\Delta^{k-j}\}=\{1\}.
      \]
\end{definition}

\begin{remark}
      Clearly, if $j\in E(f)$, then for each $\beta_j\in \Delta$,  
      \[
           |\{f(\bm{\alpha})\mid \bm{\alpha}^\psi\in \Delta^{j-1}\times\{\beta_j\}\times\Delta^{k-j}\}|=1.
      \]
      
\end{remark}

\begin{lemma}\label{lm:determine}
      Suppose that $\eqref{enu:C1}$ and $\eqref{enu:C2}$ hold. Given $\tau\in a_1H_{2,m}$, we regard each function $f_t$ defined in~\eqref{eq:regular_f_i} as a function from $\FF_2^m$ to $\FF_2$. Let $j_t$ be as in~\eqref{eq:j_t}. Then $j_t\in E(f_t)$  for each $t\in\{-d'+1,\ldots,d-d'+1\}$.
\end{lemma}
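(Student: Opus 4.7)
The plan is to combine two descriptions of the fixed point set of $y_\tau^{x_\tau^t} = x_\tau^{-t} y_\tau x_\tau^t$ and then transport one into the other via the permutation isomorphism $\psi \colon \FF_2^m \to \Omega$. On the $\FF_2^m$ side, Corollary~\ref{cor:fpiff} identifies these fixed points, for every $t$ in the required range $\{-d'+1,\ldots,d-d'+1\}$, precisely as the zero set of $f_t(\cdot,\bm{\tau})$. On the $\Omega$ side, the representation~\eqref{eq:j_t} of $y_\tau^{x_\tau^t}$ as a single-coordinate involution $(1,\ldots,1,g_{j_t}^{(t)},1,\ldots,1)$, produced by Lemma~\ref{lm:coordinate}, gives the fixed point set the product form $\Delta^{j_t-1}\times\Fix_\Delta(g_{j_t}^{(t)})\times\Delta^{k-j_t}$.

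First I would fix $t\in\{-d'+1,\ldots,d-d'+1\}$ and set $\Delta_1 := \Fix_\Delta(g_{j_t}^{(t)})$. Because $\fpr_\Delta(g_{j_t}^{(t)})=1/2$ and $|\Delta|=2^\ell\geq 5$ (so $\ell\geq 3$), we have $|\Delta_1|=2^{\ell-1}$, which is strictly between $0$ and $|\Delta|$; thus $\Delta_1$ is a nonempty proper subset of $\Delta$ as required by Definition~\ref{def:determine}.

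Next I would transport the fixed point description across $\psi$. Since $\psi$ is a permutation isomorphism, $\bm{\alpha}$ is a fixed point of $y_\tau^{x_\tau^t}$ on $\FF_2^m$ if and only if $\bm{\alpha}^{\psi}$ is a fixed point of $y_\tau^{x_\tau^t}$ on $\Omega$, which by the one-coordinate description amounts to $\bm{\alpha}^{\psi}\in \Delta^{j_t-1}\times\Delta_1\times\Delta^{k-j_t}$. Invoking Corollary~\ref{cor:fpiff} on the left-hand side then yields the equivalence
\[
\bm{\alpha}^{\psi}\in \Delta^{j_t-1}\times\Delta_1\times\Delta^{k-j_t}\ \iff\ f_t(\bm{\alpha},\bm{\tau})=0.
\]
Taking contrapositives for the complementary layer gives $\{f_t(\bm{\alpha})\mid \bm{\alpha}^\psi\in \Delta^{j_t-1}\times\Delta_1\times\Delta^{k-j_t}\}=\{0\}$ and $\{f_t(\bm{\alpha})\mid \bm{\alpha}^\psi\in \Delta^{j_t-1}\times(\Delta\setminus\Delta_1)\times\Delta^{k-j_t}\}=\{1\}$, so by Definition~\ref{def:determine} we conclude $j_t\in E(f_t)$.

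There is no genuine obstacle here; the statement is essentially a bookkeeping translation between the two descriptions of $\Fix(y_\tau^{x_\tau^t})$. The only subtlety worth flagging is verifying that $t=d-d'+1$ is genuinely included in Corollary~\ref{cor:fpiff}, since the formula for $\bm{\alpha}^{x_\tau^{-(d-d'+1)}y_\tau x_\tau^{d-d'+1}}$ given by~\eqref{eq:regular_y^{x^{2d-m+1}}} is more intricate than that of~\eqref{eq:regular_y^{x^t}}; but this is already handled in Corollary~\ref{cor:fpiff} using $\tau_1=1$, so no additional work is needed.
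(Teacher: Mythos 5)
Your proposal is correct and matches the paper's own proof essentially step for step: both combine Corollary~\ref{cor:fpiff} (fixed points of $y_\tau^{x_\tau^t}$ on $\FF_2^m$ are the zero set of $f_t$) with the single-coordinate form~\eqref{eq:j_t} of $y_\tau^{x_\tau^t}$ on $\Omega$, transport via $\psi$, and use $\fpr_\Delta(g_{j_t}^{(t)})=1/2$ to see that $\Fix_\Delta(g_{j_t}^{(t)})$ is a nonempty proper subset. No issues.
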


\begin{proof}
      Take an arbitrary $t\in\{-d'+1,\ldots,d-d'+1\}$.
      It follows from Corollary~\ref{cor:fpiff} that $f_t(\bm{\alpha})=0$ if and only if $\bm{\alpha}\in\Fix_{\FF_2^m}(y^{x^t})$. Notice by~\eqref{eq:j_t} that $(\beta_1,\ldots,\beta_k)\in \Fix_\Omega(y^{x^t})$ if and only if $\beta_{j_t}\in \Fix_\Delta(g_{j_t}^{(t)})$. We then deduce that $f_t(\bm{\alpha})=0$ if and only if $\beta_{j_t}\in \Fix_\Delta(g_{j_t}^{(t)})$. Therefore, 
      \begin{align*}
            &\{f_t(\bm{\alpha})\mid \bm{\alpha}^\psi\in \Delta^{j_t-1}\times\Fix_\Delta(g_{j_t}^{(t)})\times\Delta^{k-j_t}\}=\{0\},\\
            &\{f_t(\bm{\alpha})\mid \bm{\alpha}^\psi\in \Delta^{j_t-1}\times\big(\Delta\setminus\Fix_\Delta(g_{j_t}^{(t)})\big)\times\Delta^{k-j_t}\}=\{1\}.
      \end{align*}
      Since $\fpr_\Delta(g_{j_t}^{(t)})=1/2$, the set $\Fix_\Delta(g_{j_t}^{(t)})$ is a nonempty proper subset of $\Delta$. Then we  conclude from Definition~\ref{def:determine} that $j_t\in E(f_t)$.
\end{proof}

\begin{lemma}\label{lm:not determine}
      Suppose that $3d>2m$, and that $\eqref{enu:C1}$ and $\eqref{enu:C2}$ hold. Then for each $(\tau_2,\ldots,\tau_{d-4},\tau_{d+1})\in\FF_2^{d-4}$, there exists $(\tau_{d-1},\tau_d)\in\FF_2^2$ such that for each $(\tau_{d-3},\tau_{d-2})\in \FF_2^2$, the index $j_t$ defined in~\eqref{eq:j_t} satisfies $j_{-d'+1}=\dots=j_{d-d'+1}$.
\end{lemma}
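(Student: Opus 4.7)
The plan is to use Lemma~\ref{lm:f_0,f_1} to compare $f_0$ and $f_{-1}$ at a carefully chosen test vector $\bm{\gamma}$ and its image under $z_\tau := x_\tau^{-(d-d'+1)}y_\tau x_\tau^{d-d'+1} = y_\tau^{x_\tau^{d-d'+1}}$, and then to choose $(\tau_{d-1},\tau_d) \in \mathbb{F}_2^2$ that makes neither value coincide. Because $z_\tau$ is supported only at coordinate $j_{d-d'+1}$ of $\Omega = \Delta^k$ by~\eqref{eq:j_t}, and because Lemma~\ref{lm:determine} tells us that $f_t$ (pulled back through $\psi$) depends only on the $j_t$-th coordinate of $\bm{\alpha}^\psi$, the implication
\[
f_t(\bm{\gamma}^{z_\tau}) \neq f_t(\bm{\gamma}) \;\Longrightarrow\; j_t = j_{d-d'+1}
\]
is immediate. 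Realising this implication for $t = 0$ and $t = -1$ gives $j_{-1} = j_0 = j_{d-d'+1}$; combined with Lemma~\ref{lm:module}, the equality $j_{-1} = j_0$ amounts to $\pi^{-1}\cdot j_0 = j_0$, hence $\pi\cdot j_0 = j_0$, and therefore $j_t = j_0^{\pi^t} = j_0$ for every integer $t$. In particular $j_{-d'+1} = \dots = j_{d-d'+1}$, as required.

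For the core step, fix $(\tau_2,\ldots,\tau_{d-4},\tau_{d+1}) \in \mathbb{F}_2^{d-4}$. By Lemma~\ref{lm:findGamma}, there is $\bm{\gamma} \in \mathbb{F}_2^m$ with $\gamma_1 = \gamma_m = 0$ satisfying $f_{d-d'+1}(\bm{\gamma},\bm{\tau}) = 1$ for every $(\tau_{d-3},\ldots,\tau_{d+1}) \in \mathbb{F}_2^5$; fix this $\bm{\gamma}$. Since $\gamma_1 = 0$, Lemma~\ref{lm:x^-1 coordinate form} gives $\bm{\gamma}^{x_\tau^{-1}} = \bm{\gamma}^{\phi^{-1}}$, so the scalars
\[
A \;:=\; \lambda_{d-1}(\bm{e}_m,\bm{\gamma}^{\phi^{-1}}) \quad\text{and}\quad B \;:=\; \lambda_{d-2}(\bm{e}_m,\bm{\gamma}^{\phi^{-1}})
\]
are elements of $\mathbb{F}_2$ determined by $\bm{\gamma}$ alone. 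Substituting $\gamma_m = 0$ into Lemma~\ref{lm:f_0,f_1} collapses its two identities to
\[
f_0(\bm{\gamma}^{z_\tau}) + f_0(\bm{\gamma}) \;=\; \tau_d + A + \tau_{d+1}\varepsilon_{d+1,0},
\]
\[
f_{-1}(\bm{\gamma}^{z_\tau}) + f_{-1}(\bm{\gamma}) \;=\; \tau_{d-1} + B + (\tau_d + A)\varepsilon_{d+1,0} + \tau_{d+1}\varepsilon_{d+2,0},
\]
and crucially neither right-hand side mentions $\tau_{d-3}$ or $\tau_{d-2}$. The two expressions form a triangular affine system over $\mathbb{F}_2$ in $(\tau_d,\tau_{d-1})$ with unit leading coefficients, so we can set
\[
\tau_d \;:=\; 1 + A + \tau_{d+1}\varepsilon_{d+1,0}, \qquad \tau_{d-1} \;:=\; 1 + B + (\tau_d + A)\varepsilon_{d+1,0} + \tau_{d+1}\varepsilon_{d+2,0},
\]
forcing both right-hand sides to equal $1$ for every $(\tau_{d-3},\tau_{d-2}) \in \mathbb{F}_2^2$.

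The main thing to verify is simply that after imposing $\gamma_1 = \gamma_m = 0$ on the test vector supplied by Lemma~\ref{lm:findGamma} the two differences above become affine functions of $(\tau_{d-1},\tau_d)$ alone, which is exactly what Lemma~\ref{lm:f_0,f_1} yields once $\gamma_m = 0$ kills the terms $\gamma_m\tau_{d+1}$ and $\gamma_m\lambda_{d-1}(\bm{e}_m,\bm{\tau})$. No substantive obstacle is anticipated: Lemma~\ref{lm:findGamma} delivers a $\bm{\gamma}$ that works uniformly in $(\tau_{d-3},\ldots,\tau_{d+1})$, and the triangular structure of the simplified Lemma~\ref{lm:f_0,f_1} then makes the choices of $\tau_d$ and $\tau_{d-1}$ explicit.
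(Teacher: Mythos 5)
Your proposal is correct and follows essentially the same route as the paper: the same test vector $\bm{\gamma}$ from Lemma~\ref{lm:findGamma} with $\gamma_1=\gamma_m=0$, the same simplification of Lemma~\ref{lm:f_0,f_1}, the same triangular choice of $\tau_d$ and $\tau_{d-1}$, and the same final step via Lemmas~\ref{lm:determine} and~\ref{lm:module} to get $j_{-1}=j_0=j_{d-d'+1}$ and hence $j_t=j_0$ for all $t$. The only difference is cosmetic: you phrase the key deduction as the implication $f_t(\bm{\gamma}^{z_\tau})\neq f_t(\bm{\gamma})\Rightarrow j_t=j_{d-d'+1}$, whereas the paper argues the contrapositive by excluding all $j\neq j_{d-d'+1}$ from $E(f_0)\cup E(f_{-1})$.
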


\begin{proof}
      Fix an arbitrary $(\tau_2,\ldots,\tau_{d-4},\tau_{d+1})\in\FF_2^{d-4}$. By Lemma~\ref{lm:findGamma}, there exists $\bm{\gamma}\in \FF_2^m$ with $\gamma_1=\gamma_m=0$ such that $f_{d-d'+1}(\bm{\gamma},\bm{\tau})=1$ for all $(\tau_{d-3},\ldots,\tau_d)\in\FF_2^4$. Let us fix such a $\bm{\gamma}$. For convenience, write 
      \[
           z_\tau=x_\tau^{-(d-d'+1)}y_\tau x_\tau^{d-d'+1}.
      \]
      We first show that there exists $(\tau_{d-1},\tau_d)\in \FF_2^2$ such that  
      \begin{equation}\label{eq:plus1}
            f_0(\bm{\gamma}^{z_\tau},\bm{\tau})=f_0(\bm{\gamma},\bm{\tau})+1 \ \text{ and }\  
            f_{-1}(\bm{\gamma}^{z_\tau},\bm{\tau})=f_{-1}(\bm{\gamma},\bm{\tau})+1
      \end{equation} 
      for all $(\tau_{d-3},\tau_{d-2})\in\FF_2^2$.

      For all $(\tau_{d-3},\tau_{d-2},\tau_{d-1},\tau_d)\in \FF_2^4$, we have by Lemma~\ref{lm:f_0,f_1} and $\gamma_m=0$ that 
      \begin{align*}
            f_0(\bm{\gamma}^{z_\tau},\bm{\tau})&=f_0(\bm{\gamma},\bm{\tau})+\tau_d+\lambda_{d-1}(\bm{e}_m,\bm{\gamma}^{x_\tau^{-1}})+\tau_{d+1}\varepsilon_{d+1,0},\\
            f_{-1}(\bm{\gamma}^{z_\tau},\bm{\tau})
            &=f_{-1}(\bm{\gamma},\bm{\tau})+\tau_{d-1}+\lambda_{d-2}(\bm{e}_m,\bm{\gamma}^{x_\tau^{-1}})+\big(\tau_d+\lambda_{d-1}(\bm{e}_m,\bm{\gamma}^{x_\tau^{-1}})\big)\varepsilon_{d+1,0}+\tau_{d+1}\varepsilon_{d+2,0}.
      \end{align*}
      Note by $\gamma_1=0$ and Lemma~\ref{lm:x^-1 coordinate form} that $\lambda_i(\bm{e}_m,\bm{\gamma}^{x_\tau^{-1}})=\lambda_i(\bm{e}_m,\bm{\gamma}^{\phi^{-1}})$ for each $i\in \{d-2,d-1\}$. 
      Thus, $\lambda_{d-2}(\bm{e}_m,\bm{\gamma}^{x_\tau^{-1}})$ and $\lambda_{d-1}(\bm{e}_m,\bm{\gamma}^{x_\tau^{-1}})$ are fixed values as $\bm{\gamma}$ is fixed. Take
      \[
            \tau_d=1+\lambda_{d-1}(\bm{e}_m,\bm{\gamma}^{x_\tau^{-1}})+\tau_{d+1}\varepsilon_{d+1,0}.
      \]
      Then the value of $\tau_d$ is fixed, and $f_0(\bm{\gamma}^{z_\tau},\bm{\tau})=f_0(\bm{\gamma},\bm{\tau})+1$ for all $(\tau_{d-3},\tau_{d-2},\tau_{d-1})\in \FF_2^3$. Take
      \[
            \tau_{d-1}=1+\lambda_{d-2}(\bm{e}_m,\bm{\gamma}^{x_\tau^{-1}})+\big(\tau_d+\lambda_{d-1}(\bm{e}_m,\bm{\gamma}^{x_\tau^{-1}})\big)\varepsilon_{d+1,0}
            +\tau_{d+1}\varepsilon_{d+2,0}.
      \]
      Then the value of $\tau_{d-1}$ is fixed and $f_{-1}(\bm{\gamma}^{z_\tau},\bm{\tau})=f_{-1}(\bm{\gamma},\bm{\tau})+1$ for all $(\tau_{d-3},\tau_{d-2})\in \FF_2^2$. Hence, there exists $(\tau_{d-1},\tau_d)\in \FF_2^2$ such that~\eqref{eq:plus1} holds for all $(\tau_{d-3},\tau_{d-2})\in\FF_2^2$, as desired.
      
      Take an arbitrary $(\tau_{d-3},\tau_{d-2})\in \FF_2^2$. For the chosen $(\tau_2,\ldots,\tau_{d-4},\tau_{d-1},\tau_d,\tau_{d+1})\in\FF_2^{d-2}$ and $\bm{\gamma}\in\FF_2^m$, we have  
      \[
            f_0(\bm{\gamma}^{z_\tau})=f_0(\bm{\gamma})+1 \ \text{ and }\  
            f_{-1}(\bm{\gamma}^{z_\tau})=f_{-1}(\bm{\gamma})+1.
      \]
      Moreover, for this $\bm{\tau}$, we let $j_t$ be as in~\eqref{eq:j_t}, and let $(\beta_1,\ldots,\beta_k)=\bm{\gamma}^\psi$. Then since $z_\tau=(1,\ldots,1,g_{j_{d-d'+1}}^{(d-d'+1)},1,\ldots,1)$, where $g_{j_{d-d'+1}}^{(d-d'+1)}$ appears at the $j_{d-d'+1}$-th coordinate, we have
      \begin{align*}
            (\beta_1,\ldots,\beta_k)^{z_\tau}&=(\beta_1,\ldots,\beta_k)^{(1,\ldots,1,g_{j_{d-d'+1}}^{(d-d'+1)},1,\ldots,1)}\\
            &=(\beta_1,\ldots,\beta_{j_{d-d'+1}-1},\beta_{j_{d-d'+1}}^{g_{j_{d-d'+1}}^{(d-d'+1)}},\beta_{j_{d-d'+1}+1},\ldots,\beta_k).
      \end{align*}
      This implies that for each $j\in\{1,\ldots,k\}\setminus\{j_{d-d'+1}\}$, the set $\Delta^{j-1}\times\{\beta_{j}\}\times\Delta^{k-j}$ contains both $(\beta_1,\ldots,\beta_k)$ and $(\beta_1,\ldots,\beta_k)^{z_\tau}$, and so the set $(\Delta^{j-1}\times\{\beta_{j}\}\times\Delta^{k-j})^{\psi^{-1}}$ contains both $\bm{\gamma}$ and $\bm{\gamma}^{z_\tau}$. Therefore for each $j\in\{1,\ldots,k\}\setminus\{j_{d-d'+1}\}$, 
      \begin{align*}
            &|\{f_0(\bm{\alpha})\mid \bm{\alpha}^\psi\in \Delta^{j-1}\times\{\beta_{j}\}\times\Delta^{k-j}\}|\geq|\{f_0(\bm{\gamma}),f_0(\bm{\gamma}^{z_\tau})\}|=|\{f_0(\bm{\gamma}),f_0(\bm{\gamma})+1\}|=2,\\
            &|\{f_{-1}(\bm{\alpha})\mid \bm{\alpha}^\psi\in \Delta^{j-1}\times\{\beta_{j}\}\times\Delta^{k-j}\}|\geq|\{f_{-1}(\bm{\gamma}),f_{-1}(\bm{\gamma}^{z_\tau})\}|=|\{f_{-1}(\bm{\gamma}),f_{-1}(\bm{\gamma})+1\}|=2.
      \end{align*}
      Then we conclude from the Remark of Definition~\ref{def:determine} that  $j\notin E(f_0)\cup E(f_{-1})$ for each $j\in \{1,\ldots,k\}\setminus \{j_{d-d'+1}\}$. Noticing by Lemma~\ref{lm:determine} that $j_0\in E(f_0)$ and $j_{-1}\in E(f_{-1})$, we obtain $j_0=j_{d-d'+1}=j_{-1}$. This combined with Lemma~\ref{lm:module} implies that $j_t=j_0$ for all $t\in\Z$, which completes the proof.
\end{proof}

\begin{proof}[Proof of Proposition~\ref{prop:exwr}]
      Suppose that $\langle R(H),x_\tau\rangle$ is a primitive subgroup of $\Sym(\Delta)\wr S_k$ acting on $\Omega=\Delta^k$ with product action, where $|\Delta|=2^\ell\geq 5$ and $k\ell=m$.
      Take an arbitrary $(\tau_2,\ldots,\tau_{d-4},\tau_{d+1})\in\FF_2^{d-4}$. We see from Lemma~\ref{lm:not determine} that there exists $(\tau_{d-1},\tau_d)\in\FF_2^2$ such that for each $(\tau_{d-3},\tau_{d-2})\in \FF_2^2$, the index $j_t$ defined in~\eqref{eq:j_t} satisfies $j_{-d'+1}=\dots=j_{d-d'+1}$. Then we derive from Lemma~\ref{lm:determine} that $j_0\in E(f_t)$ for each $t\in\{-d'+1,\ldots,d-d'\}$. Hence, given any $\beta_{j_0}\in\Delta$, it follows from Definition~\ref{def:determine} that there exists a $\{0,1\}$-sequence $(\gamma_t)_{t=-d'+1}^{d-d'}$ such that for each $t\in\{-d'+1,\ldots, d-d'\}$,
      \[
            \{f_t(\bm{\alpha})\mid \bm{\alpha}^\psi\in \Delta^{j_0-1}\times\{\beta_{j_0}\}\times\Delta^{k-j_0}\}=\{\gamma_t\}.
      \]
      Hence, each element of $(\Delta^{j_0-1}\times\{\beta_{j_0}\}\times\Delta^{k-j_0})^{\psi^{-1}}$ is a common solution to the equations 
      \[
            f_t(\bm{\alpha})=\gamma_t\ \text{ for } \  t\in \{-d'+1,\ldots,d-d'\}.
      \]
      Then it follows from Lemma~\ref{lm:number of solutoins} that
      \[
            |(\Delta^{j_0-1}\times\{\beta_{j_0}\}\times\Delta^{k-j_0})^{\psi^{-1}}|\leq 2^{d'-2}.
      \]
      However, since $m=k\ell$ with $k\geq 2$ and $m=d+d'\geq 2d'+d'=3d'$, we obtain
      \[
      |(\Delta^{j_0-1}\times\{\beta_{j_0}\}\times\Delta^{k-j_0})^{\psi^{-1}}|=|\Delta|^{k-1}=2^{\ell(k-1)}=2^{m-\ell}\geq 2^{m/2}>2^{d'-2},
      \]
      a contradiction.
\end{proof}

\section{Proof of the main theorem}\label{sec:proof}

We apply the results in Sections~\ref{sec:primitive}--\ref{sec:PA} to complete the proof of Theorem~\ref{thm:regular}.

\begin{proof}[Proof of Theorem~\ref{thm:regular}]
      Let $H=\langle a_1,\ldots,a_m\rangle$ be a tightly concentric group. Then both~\eqref{enu:C1} and~\eqref{enu:C2} hold, and $3d>2m$. As a consequence, $d-d'=2d-m>m/3$. Adopt the notation in Section~\ref{sec:preliminary}. In particular, $\tau_1=1$, and $\tau_{d+2}=\dots=\tau_m=0$.
      
      By Proposition~\ref{prop:primitive2}, we may choose $\tau_{d+1}\in\FF_2$ such that the group $\langle R(H),x_\tau\rangle$ is primitive on $H$ for each $(\tau_2,\ldots,\tau_d)\in\FF_2^{d-1}$. Note that the right hand side of~\eqref{eq:sum} lies in the algebra $\FF_2[\tau_2,\ldots,\tau_{d'-1}]$ and the inequality $d-d'>m/3\geq 3$ implies $d-4\geq d'$. Thus, we may choose $(\tau_2,\ldots,\tau_{d-4})\in\FF_2^{d-5}$ such that~\eqref{eq:sum} holds. Then, we deduce from Proposition~\ref{prop:exwr} that for the chosen $(\tau_2,\ldots,\tau_{d-4},\tau_{d+1})\in\FF_2^{d-4}$, we can further choose $(\tau_{d-1},\tau_d)\in\FF_2^2$ such that for any $(\tau_{d-3},\tau_{d-2})\in\FF_2^2$, the group $\langle R(H),x_\tau\rangle$ is not a subgroup of $\Sym(\Delta)\wr S_k$ for any set $\Delta$ and positive integer $k$ with $|\Delta|=2^{m/k}\geq 5$. Moreover, we obtain from Proposition~\ref{prop:affine} that for the chosen $(\tau_2,\ldots,\tau_{d-4},\tau_{d-1},\tau_d,\tau_{d+1})\in\FF_2^{d-2}$, there exists $(\tau_{d-3},\tau_{d-2})\in\FF_2^2$ such that $\langle R(H),x_\tau\rangle$ is not contained in any affine group on $H$. To sum up, there exists 
      $(\tau_2,\ldots,\tau_{d+1})\in\FF_2^d$ such that the following conditions hold:
      \begin{itemize}
            \item the group $\langle R(H),x_\tau\rangle$ is primitive on $H$.
            \item the group $\langle R(H),x_\tau\rangle$ is not a subgroup of $\Sym(\Delta)\wr S_k$ for any set $\Delta$ and positive integer $k$ with $|\Delta|=2^{m/k}\geq 5$.
            \item the group $\langle R(H),x_\tau\rangle$ is not contained in any affine group on $H$.
      \end{itemize}
      Combining these conclusions with Lemmas~\ref{lm:SubAlt} and~\ref{lm:AS and Dia}, we obtain from the O'Nan-Scott Theorem~\cite[Page~66]{P1990} that $\langle R(H),x_\tau\rangle=\Alt(H)$ for such $\tau$. Then, by the general framework introduced after Conjecture~\ref{conj:1.8}, we conclude that every tightly concentric group is a $4$-HAT-stabilizer, as desired.
\end{proof}

\noindent\textbf{Acknowledgements.}
The first and third authors acknowledge the support of ARC Discovery Project DP250104965. The second author was supported by the Melbourne Research Scholarship provided by The University of Melbourne.


\begin{thebibliography}{99}

\bibitem{Magma}
W.~Bosma, J.~Cannon and~C. Playoust, 
The magma algebra system I: the user language, \emph{J.~Symbolic~Comput.}, 24 (3--4) (1997) 235--265.

\bibitem{B1970}
I.~Z.~Bouwer, 
Vertex and edge transitive, but not 1-transitive, graphs,
\emph{Canad.~Math.~Bull.}, 13 (1970) 231--237.

\bibitem{CM2003}
M.D.E.~Conder and D.~Maru\v{s}i\v{c},
A tetravalent half-arc-transitive graph with non-Abelian vertex stabilizer,
\emph{J.~Combin.~Theory~Ser.~B}, 88~(1) (2003) 67--76.

\bibitem{CPS2015}
M.D.E.~Conder, P.~ Poto\v{c}nik and P.~\v{S}parl,
Some recent discoveries about half-arc-transitive graphs, 
\emph{Ars Math. Contemp.}, 8~(1) (2015) 149--162.

\bibitem{LPS2010}
M.~W.~Liebeck, C.~E.~Praeger and J.~Saxl,
Regular subgroups of primitive permutation groups,
\emph{Mem.~Amer.~Math.~Soc.}, 203~(2010).

\bibitem{M1998}
D.~Marušič,
Recent developments in half-transitive graphs, \emph{Discrete~Math.}, 182~(1--3) (1998) 219--231.

\bibitem{M2005}
D.~Maru\v{s}i\v{c},
Quartic half-arc-transitive graphs with large vertex stabilizers,
\emph{Discrete Math.}, 229~(1--3) (2005) 180--193.

\bibitem{MN2001}
D.~Maru\v{s}i\v{c} and R.~Nedela,
On the point stabilizers of transitive groups with non-self-paired suborbits of length 2,
\emph{J.~Group~Theory}, 4~(1) (2001) 19--43.

\bibitem{PV2010}
P.~Poto\v{c}nik and G.~Verret,
On the vertex-stabilizer in arc-transitive digraphs,
\emph{J.~Combin.~Theory~Ser.~B}, 100~(2010), 497--509.

\bibitem{P1990}
C.~E.~Preager, 
Finite primitive permutation groups: a survey,
in \emph{Groups-Canberra} 1989, Springer, Berlin, 1990: 63--84.

\bibitem{S2016}
P.~Spiga, 
Constructing half-arc-transitive graphs of valency four with prescribed vertex stabilizers,
\emph{Graphs Combin.}, 32~(5) (2016) 2135--2144.

\bibitem{SX2021}
P.~Spiga and B.~Xia,
Constructing infinitely many half-arc-transitive covers of tetravalent graphs,
\emph{J.~Combin.~Theory~Ser.~A}, 180 (2021) 105406.

\bibitem{T1966}
W.T.~Tutte,
\emph{Connectivity in Graphs},
University of Toronto Press, Toronto, 1966.

\bibitem{X2021}
B.~Xia,
Tetravalent half-arc-transitive graph with unbounded nonabelian vertex stabilizers,
\emph{J.~Combin.~Theory~Ser.~B}, 147 (2021) 159--182.


\end{thebibliography}
\end{document}